\numberwithin{equation}{section}
\let\oldalign\align
\let\oldendalign\endalign
\begin{document}
\theoremstyle{definition}
\newcommand{\nc}{\newcommand}
\def\PP#1#2#3{{\mathrm{Pres}}^{#1}_{#2}{#3}\setcounter{equation}{0}}
\def\mr#1{{{\mathrm{#1}}}\setcounter{equation}{0}}
\def\mc#1{{{\mathcal{#1}}}\setcounter{equation}{0}}
\def\mb#1{{{\mathbb{#1}}}\setcounter{equation}{0}}
\def\Mcc{\mc{C}}
\def\Mbe{\mb{E}}
\def\Mcp{\mc{P}}
\def\Mcg{\mc{G}}
\def\Mcw{\mc{W}}
\def\Mcx{\mc{X}}
\def\Mch{\mc{H}}
\def\extri{(\mc{C},\mb{E},\mathfrak{s})}
\def\GP{\mc{G}\mc{P}(\xi)}
\def\GI{\mc{G}\mc{I}(\xi)}
\def\P{\mc{P}(\xi)}
\def\Extri{\mb{E}\text{-triangle}}
\def\ext{\xi \text{xt}_{\xi}}
\def\Ext{\xi \text{xt}}
\def\res#1{\text{res\ }(#1)}
\def\>{\longrightarrow}
\def\Px{\mc{P_\Mcx}(\xi)}
\def\Gpx{\mc{GQP_{\Mcx}(\xi)}}
\def\Gx{\mathcal{G_{\Mcx}(\xi)}}
\newcommand{\resdim}[2]{{\text{res.dim}}_{\mc{#1}}{(#2)}}
\newcommand{\coresdim}[2]{\text{cores.dim}_{\mc{#1}(\xi)}#2}
\newtheorem{defn}{\bf Definition}[section]
\newtheorem{cor}[defn]{\bf Corollary}   
\newtheorem{prop}[defn]{\bf Proposition}
\newtheorem{thm}[defn]{\bf Theorem}
\newtheorem{lem}[defn]{\bf Lemma}
\newtheorem{rem}[defn]{\bf Remark}
\newtheorem{exam}[defn]{\bf Example}   
\newtheorem{fact}[defn]{\bf Fact}
\newtheorem{cond}[defn]{\bf Condition}
\def\bskip#1{{ \vskip 20pt }\setcounter{equation}{0}}
\def\sskip#1{{ \vskip 5pt }\setcounter{equation}{0}}
\def\mskip#1{{ \vskip 10pt }\setcounter{equation}{0}}
\def\bg#1{\begin{#1}\setcounter{equation}{0}}
\def\ed#1{\end{#1}\setcounter{equation}{0}}

\title{Quasi-resolving subcategories and dimensions in extriangulated categories}
\smallskip

\author[a]{\small Zhenggang He\thanks{Corresponding author:zhenggang\_he@163.com}}
\author[a]{\small Longfu Shi}
\author[b]{\small Shuangyan Li}
\affil[a]{\small Department of Public Foundation, Dali Nursing Vocational College, Dali 671006, China}
\affil[b]{\small Chuxiong Normal University, Chuxiong 675000,China}

\date{}
\maketitle
\baselineskip 15pt

\vskip 10pt%
\noindent {\bf Abstract}:
Let $\Mcc=(\Mcc,\Mbe,\mathfrak{s})$ be an extriangulated category with a proper class $\xi$ of $\Mbe$-triangles. In this paper, we introduce and study  quasi-resolving subcategories in $\Mcc$. More precisely, we first introduce the notion of $\Mcx$-resolution dimensions for a quasi-resolving subcategory $\Mcx$ of $\Mcc$ and then give some equivalent characterizations of objects which have finite $\Mcx$-resolution dimensions. As an application, we introduce Gorenstein quasi-resolving subcategories, denoted by $\Gpx$, in term of a quasi-resolving subcategory $\Mcx$, and prove that $\Gpx$ is also a quasi-resolving subcategory of $\Mcc$. Moreover, some classical known results are generalized in $\Gpx$.
 
\mskip\

\noindent {\bf Keywords}:~Extriangulated categories; Quasi-resolving subcategories; Resolution dimensions; Gorenstein quasi-resolving subcategories
\mskip\

\noindent {\bf MSC2020}: 18E05; 18G10; 18G20; 18G25
\vskip 30pt

\section{Introduction}
\quad~ The study of resolving and quasi-resolving subcategories has played a pivotal role in the development of relative homological algebra and representation theory. Originating from the work of Auslander and Bridger on totally reflexive modules which are also called modules of Gorenstein dimension zero or finitely generated Gorenstein projective modules in \cite{MM}, resolving subcategories have been extensively investigated due to their applications in Gorenstein homological algebra and the theory of maximal Cohen-Macaulay modules. These subcategories, which are closed under extensions, kernels of epimorphisms, and direct summands, provide a unifying framework for understanding various homological properties across different algebraic contexts (see \cite{RT,ZhX1}). Zhu introduced the notion of quasi-resolving subcategories as a generalization of resolving subcategories in \cite{ZhX2}, which broadens the perspective by relaxing certain closure conditions while preserving essential homological features. Quasi-resolving subcategories have been shown to unify and extend many classical results, making them a powerful tool in modern homological algebra (see \cite{CW,ZhX2,ZY}).

The notion of extriangulated categories was introduced by Nakaoka and Palu in \cite{HY} as a simultaneous generalization of exact categories and triangulated categories.  Hence many results hold on exact categories and triangulated categories can be unified in the same framework. Moreover, there exist extriangulated categories which are neither exact nor triangulated, some examples can be found in \cite{HY,Zhou}. Let $(\Mcc,\Mbe, \mathfrak{s})$ be an extriangulated category. Hu, Zhang and Zhou \cite{JDP} studied a relative homological algebra in $\Mcc$ which parallels the relative homological algebra in a triangulated category. By specifying a class of $\Mbe$-triangles, which is called a proper class $\xi$ of $\Mbe$-triangles, they introduced $\xi$-$\mathcal{G}$projective dimensions and $\xi$-$\mathcal{G}$injective dimensions and discussed their properties. Also, they  gave some characterizations of $\xi$-$\mathcal{G}$injective dimension by using derived functors in \cite{JZP}.

This paper focuses on the homological theory of quasi-resolving subcategories in an extriangulated category $(\Mcc,\Mbe, \mathfrak{s})$. Our primary objective is to extend and unify existing results from abelian and triangulated categories to this more general setting. Specifically, we investigate the properties of quasi-resolving subcategories,  and their applications in studying resolution dimensions and finitistic dimensions.

The paper is organized as follows: In Section 2, we review the necessary preliminaries on extriangulated categories and proper classes of $\Mbe$-triangles. In Section 3, we  introduce the notion of quasi-resolving subcategories in extriangulated categories and establish their basic properties. Moreover, we deal with the syzygy objects of $\Mcx$-resolutions in a quasi-resolving subcategory $\Mcx$. In Section 4, we discuss the $\Mcx$-resolution dimensions relative to a quasi-resolving subcategory $\Mcx$, and provide several equivalent characterizations for objects with finite $\Mcx$-resolution dimension. In Section 5, As an application, we construct a new quasi-resolving subcategory $\Gpx$ which called Gorenstein  quasi-resolving subcategory  from a given quasi-resolving subcategory $\Mcx$, which generalizes the notion of $\xi$-Gorenstein projective objects given by Hu, Zhang and Zhou in \cite{JDP}. By applying the results from the previous sections to the Gorenstein quasi-resolving subcategory $\Gpx$, we have generalized some known conclusions.

Throughout this paper, all subcategories are assumed to be full and additive.

\section{Preliminaries}
\quad~Throughout this paper, we always assume that $\Mcc= (\Mcc,\Mbe,\mathfrak{s})$  is an  extriangulated category with  a proper class $\xi$ of $\Mbe$-triangles, which has enough $\xi$-projectives and $\xi$-injectives and  satisfies Condition (WIC).  Let us briefly recall some basic definitions of extriangulated categories. We omit some details here, but the reader can find them in \cite{CZZ, YH,HY,HYA,JDP,JZP}.

\begin{defn}\citep[Definition 2.1 and 2.3]{HY}
	Suppose that $\mathcal{C}$ is equipped with an  biadditive functor $\mathbb{E}:\mathcal{C}^{op}\times \mathcal{C}\rightarrow \mathbf{Ab}$. For any pair of objects $A, C$ in $\mathcal{C}$, an element $\delta \in\mathbb{E}(C,A)$ is called an $\mathbb{E}$-extension. For any $a\in \mathcal{C}(A, A')$ and $c\in \mathcal{C}(C', C)$, we have $\mathbb{E}$-extensions $\mathbb{E}(C, a)(\delta)\in \mathbb{E}(C, A') $ and $\mathbb{E}(c,A)(\delta)\in \mathbb{E}(C', A).$ We abbreviately denote them by $a_{*}\delta$ and $c^* \delta$ respectively. For any $ A$, $ C\in \mathcal{C}$, the zero element $0\in\mathbb{E}(C, A)$ is called the split $\mathbb{E}$-extension. And let $\delta\in \mathbb{E}(C,A)$ and $\delta '\in \mathbb{E}(C',A')$ be any pair of $\mathbb{E}$-extensions. A morphism $(a,c):\delta\rightarrow\delta'$ of $\mathbb{E}$-extensions is a pair of morphism $a\in \mathcal{C}(A,A')$ and $c\in \mathcal{C}(C,C')$ in $\mathcal{C}$ satisfying the equality
	\[
		a_*\delta=c^*\delta '.
	\]
\end{defn}

\begin{defn}\citep[Definition 2.7]{HY}
	Let $A$, $C\in \mathcal{C}$ be any pair of objects. Two sequences of morphisms $A\stackrel{x}\longrightarrow B\stackrel{y}\longrightarrow C$ and $A\stackrel{x'}\longrightarrow B'\stackrel{y'}\longrightarrow C$ in $\mathcal{C}$ are said to be equivalent if there exists an isomorphism $b\in \mathcal{C}(B,B')$ which makes the following diagram commutative.
	\[
	\xymatrix{
		A\ar@{=}[d]\ar[r]^x &B\ar[d]^b\ar[r]^y &C\ar@{=}[d]\\
		A\ar[r]^{x'} &B'\ar[r]^{y'} &C}
	\]
\end{defn}	

We denote the equivalence class of $A\stackrel{x}\longrightarrow B\stackrel{y}\longrightarrow C$ by [$A\stackrel{x}\longrightarrow B\stackrel{y}\longrightarrow C$].

\begin{defn}\citep[Definition 2.8]{HY}
	(1) For any $A$, $C\in \mathcal{C}$, we let
	\[
		0=[A \stackrel{\begin{tiny}\begin{bmatrix}
		1 \\
		0
		\end{bmatrix}\end{tiny}}{\longrightarrow} A \oplus C \stackrel{\begin{tiny}\begin{bmatrix}
		0&1
		\end{bmatrix}\end{tiny}}{\longrightarrow} C].
	\] 
	(2) For any $[A\stackrel{x}\longrightarrow B\stackrel{y}\longrightarrow C]$ and $[A'\stackrel{x'}\longrightarrow B'\stackrel{y'}\longrightarrow C']$, we let
	\[
		[A\stackrel{x}\longrightarrow B\stackrel{y}\longrightarrow C]\oplus[A'\stackrel{x'}\longrightarrow B'\stackrel{y'}\longrightarrow C']=[A\oplus A'\stackrel{x\oplus x'}\longrightarrow B\oplus B'\stackrel{y\oplus y'}\longrightarrow C\oplus C'].
	\] 
\end{defn}

\begin{defn}\citep[Definition 2.9]{HY}
	Let $\mathfrak{s}$ be a correspondence which associates an equivalence class $\mathfrak{s}(\delta)=[A\stackrel{x}{\longrightarrow}B\stackrel{y}{\longrightarrow}C]$ to any $\mathbb{E}$-extension $\delta\in\mathbb{E}(C,A)$ . This $\mathfrak{s}$ is called a  realization of  $\mathbb{E}$, if for any morphism $(a,c):\delta\rightarrow\delta'$ with $\mathfrak{s}(\delta)=[A\stackrel{x}\longrightarrow B\stackrel{y}\longrightarrow C]$ and $\mathfrak{s}(\delta')=[A'\stackrel{x'}\longrightarrow B'\stackrel{y'}\longrightarrow C']$, there exists $b\in \mathcal{C}$ which makes the following  diagram commutative.
	\[
		\xymatrix{
		& A \ar[d]^{a} \ar[r]^{x} & B  \ar[r]^{y}\ar[d]^{b} & C \ar[d]^{c}    \\
		&A'\ar[r]^{x'} & B' \ar[r]^{y'} & C'}
	\]
In the above situation, we say that the triplet $(a,b,c)$ realizes $(a,b)$.
\end{defn}

\begin{defn}\citep[Definition 2.10]{HY}
	Let $\mathcal{C},\mathbb{E}$ be as above. A realization $\mathfrak{s}$ of $\mathbb{E}$ is said to be {\em additive} if it satisfies the following conditions.
	
	(a) For any $A,~C\in\mathcal{C}$, the split $\mathbb{E}$-extension $0\in\mathbb{E}(C,A)$ satisfies $\mathfrak{s}(0)=0$.
	
	(b) $\mathfrak{s}(\delta\oplus\delta')=\mathfrak{s}(\delta)\oplus\mathfrak{s}(\delta')$ for any pair of $\mathbb{E}$-extensions $\delta$ and $\delta'$.
	
\end{defn}

\begin{defn}\citep[Definition 2.12]{HY}
	A triplet $(\mathcal{C}, \mathbb{E},\mathfrak{s})$ is called an extriangulated category if it satisfies the following conditions. \\
	$\rm(ET1)$ $\mathbb{E}$: $\mathcal{C}^{op}\times\mathcal{C}\rightarrow \mathbf{Ab}$ is a biadditive functor.\\
	$\rm(ET2)$ $\mathfrak{s}$ is an additive realization of $\mathbb{E}$.\\
	$\rm(ET3)$ Let $\delta\in\mathbb{E}(C,A)$ and $\delta'\in\mathbb{E}(C',A')$ be any pair of $\mathbb{E}$-extensions, realized as
	\[
		\mathfrak{s}(\delta)=[A\stackrel{x}{\longrightarrow}B\stackrel{y}{\longrightarrow}C] \quad \text{and} \quad \mathfrak{s}(\delta')=[A'\stackrel{x'}{\longrightarrow}B'\stackrel{y'}{\longrightarrow}C'].
	\]
For any pair $(a, b)$ defining such a commutative square in $\mathcal{C}$, 	
\[
	\xymatrix{
		A \ar[d]^{a} \ar[r]^{x} & B \ar[d]^{b} \ar[r]^{y} & C \\
		A'\ar[r]^{x'} &B'\ar[r]^{y'} & C'}
\]
there is  a morphism $c:C\to C'$  that the pair $(a,c)$ defines a morphism of extensions: $\delta\rightarrow\delta'$ which is realized by the triple $(a,b,c)$.\\
	$\rm(ET3)^{op}$ Dual of $\rm(ET3)$.\\
    $\rm(ET4)$ Let $\delta\in \mathbb{E}(D,A)$ and $\delta'\in \mathbb{E}(F,B)$ be $\mathbb{E}$-extensions respectively realized by
	\[
		A\stackrel{f}{\longrightarrow}B\stackrel{f'}{\longrightarrow}D\quad \text{and }\quad B\stackrel{g}{\longrightarrow}C\stackrel{g'}{\longrightarrow}F.
	\]
Then there exist an object $E\in\mathcal{C}$, a commutative diagram
	\[
		\xymatrix{
		A \ar@{=}[d]\ar[r]^{f} &B\ar[d]^{g} \ar[r]^{f'} & D\ar[d]^{d} \\
		A \ar[r]^{h} & C\ar[d]^{g'} \ar[r]^{h'} & E\ar[d]^{e} \\
		& F\ar@{=}[r] & F }
	\]
in $\mathcal{C}$, and an $\mathbb{E}$-extension $\delta''\in \mathbb{E}(E,A)$ realized by $A\stackrel{h}{\longrightarrow}C\stackrel{h'}{\longrightarrow}E$, which satisfy the following compatibilities.\\
	$(\textrm{i})$ $D\stackrel{d}{\longrightarrow}E\stackrel{e}{\longrightarrow}F$ realizes $f'_*\delta'$,\\
	$(\textrm{ii})$ $d^*\delta''=\delta$,\\
	$(\textrm{iii})$ $f_*\delta''=e^*\delta'$.\\
	$\rm(ET4)^{op}$ Dual of $\rm(ET4)$.
\end{defn}

\begin{rem}
	Both exact categories and triangulated categories can be viewed as extriangulated categories, as demonstrated in \citep[Example 2.13]{HY}. It should be noted that there exist extriangulated categories that are neither exact categories nor triangulated categories, with examples provided in \citep[Proposition 3.30]{HY} and \citep[Example 4.14]{Zhou}.
\end{rem}

We will use the following terminology.

\begin{defn}\citep[Definition 2.15 and 2.19]{HY}
	Let $(\mathcal{C},\mathbb{E},\mathfrak{s})$ be an extriangulated category.
	
	(1) A sequence $A\stackrel{x}\longrightarrow B\stackrel{y}\longrightarrow C$ is called conflation if it realizes some $\mathbb{E}$-extension $\delta\in \mathbb{E}(C,A)$. 
	
	(2) If a conflation $A\stackrel{x}\longrightarrow B\stackrel{y}\longrightarrow C$ realizes $\delta\in \mathbb{E}(C,A)$, we call the pair ($A\stackrel{x}\longrightarrow B\stackrel{y}\longrightarrow C, \delta$) an $\mathbb{E}$-triangle, and write it by
\[
 A\stackrel{x}\longrightarrow B \stackrel{y}\longrightarrow C  \stackrel{\delta}\dashrightarrow .
\]
We usually don't write this “ $\delta$” if it not used in the argument.
	
	(3) Let $A\stackrel{x}\longrightarrow B \stackrel{y}\longrightarrow C  \stackrel{\delta}\dashrightarrow $ and  $A'\stackrel{x'}\longrightarrow B' \stackrel{y'}\longrightarrow C'  \stackrel{\delta'}\dashrightarrow $ be any pair of $\mathbb{E}$-triangles. If a triplet $(a,b,c)$ realizes $(a,c):\delta \rightarrow\delta'$, then we write it as	
	\[
		\xymatrix{
			A \ar[d]^{a} \ar[r]^{x} & B  \ar[r]^{y}\ar[d]^{b} & C \ar[d]^{c} \ar@{-->}[r]^{\delta} &  \\
			A'\ar[r]^{x'} & B' \ar[r]^{y'} & C' \ar@{-->}[r]^{\delta'} &    }
	\]
and call $(a,b,c)$ a morphism of $\mathbb{E}$-triangles.
\end{defn}

	\begin{lem}\label{BH}\citep[Proposition 3.15]{HY}
		Let $(\mathcal{C},\mathbb{E},\mathfrak{s})$ be an extriangulated category. Then the following hold.
		
		(1) Let $C$  be any object, and let $A_1\stackrel{x_1}{\longrightarrow}B_1\stackrel{y_1}{\longrightarrow}C\stackrel{\delta_1}\dashrightarrow$  and $A_2\stackrel{x_2}{\longrightarrow}B_2\stackrel{y_2}{\longrightarrow}C\stackrel{\delta_2}\dashrightarrow$  be any pair of $\mathbb{E}$-triangles. Then there is a commutative diagram in $\mathcal{C}$
	\[
		\xymatrix{
			&{A_2}\ar[d]^{m_2}\ar@{=}[r]&{A_2}\ar[d]^{x_2}\\
			{A_1}\ar@{=}[d]\ar[r]^{m_1}&M\ar[r]^{e_1}\ar[d]^{e_2}&{B_2}\ar[d]^{y_2}\\
			 {A_1}\ar[r]^{x_1}&{B_1}\ar[r]^{y_1}&C}
	\]
		\noindent which satisfies $\mathfrak{s}(y^*_2\delta_1)=[A_1\stackrel{m_1}{\longrightarrow}M\stackrel{e_1}{\longrightarrow}B_2]$, $\mathfrak{s}(y^*_1\delta_2)=[A_2\stackrel{m_2} {\longrightarrow}M\stackrel{e_2}{\longrightarrow}B_1]$ and $m_{1*}\delta_1 +m_{2*}\delta_2 = 0$.
		
		(2)  Let $A$  be any object, and let $A\stackrel{x_1}{\longrightarrow}B_1\stackrel{y_1}{\longrightarrow}C_1\stackrel{\delta_1}\dashrightarrow$  and $A\stackrel{x_2}{\longrightarrow}B_2\stackrel{y_2}{\longrightarrow}C_2\stackrel{\delta_2}\dashrightarrow$  be any pair of $\mathbb{E}$-triangles. Then there is a commutative diagram in $\mathcal{C}$
		\[
			\xymatrix{
				A\ar[d]^{x_2}\ar[r]^{x_1}&{B_1}\ar[d]^{m_2}\ar[r]^{y_1}&{C_1}\ar@{=}[d]\\
				{B_2}\ar[d]^{y_2}\ar[r]^{m_1}&M\ar[r]^{e_1}\ar[d]^{e_2}&{C_1}\\
				{C_2}\ar@{=}[r]&{C_2}
			}
		\]
		\noindent which satisfies $\mathfrak{s}(x_{2_*}\delta_1)=[B_2\stackrel{m_1}{\longrightarrow}M\stackrel{e_1}{\longrightarrow}C_1]$, $\mathfrak{s}(x_{1_*}\delta_2)=[B_1\stackrel{m_2}{\longrightarrow}M\stackrel{e_2}{\longrightarrow}C_2]$ and $e_1^*\delta_1 +e_2^*\delta_2 = 0$.
	\end{lem}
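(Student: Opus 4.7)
The plan is to prove part (1) and obtain part (2) by the evident dual argument, swapping (ET3), (ET4) with their duals (ET3)$^{\mathrm{op}}$, (ET4)$^{\mathrm{op}}$ throughout. The strategy is to realize $M$ as the total object of a pullback-style $\mathbb{E}$-triangle over $y_2:B_2\to C$, and then use the realization axiom together with the octahedral-type axiom (ET4)$^{\mathrm{op}}$ to extract the remaining $\mathbb{E}$-triangle through $M$ along with the additivity relation $m_{1*}\delta_1 + m_{2*}\delta_2 = 0$.

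First I would form the pulled-back $\mathbb{E}$-extension $y_2^{*}\delta_1\in \mathbb{E}(B_2,A_1)$ and realize it as an $\mathbb{E}$-triangle $A_1\stackrel{m_1}{\longrightarrow} M\stackrel{e_1}{\longrightarrow} B_2 \dashrightarrow$. By the very definition of the pullback, $(1_{A_1},y_2)$ is a morphism of $\mathbb{E}$-extensions from $y_2^{*}\delta_1$ to $\delta_1$, so the realization axiom supplies a morphism $e_2:M\to B_1$ making $(1_{A_1},e_2,y_2)$ a morphism of $\mathbb{E}$-triangles. This recovers the top row, rightmost column, and middle column of the claimed $3\times 3$ diagram, and delivers the commutation relations $e_2 m_1 = x_1$ and $y_1 e_2 = y_2 e_1$.

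The main step is to identify the second vertical $\mathbb{E}$-triangle through $M$. Applying (ET4)$^{\mathrm{op}}$ to the composable pair $A_1\stackrel{m_1}{\longrightarrow} M\stackrel{e_1}{\longrightarrow} B_2$ and $A_2\stackrel{x_2}{\longrightarrow} B_2\stackrel{y_2}{\longrightarrow} C$, in which $B_2$ appears in the slot prescribed by the dual axiom, produces an object $M'$ together with an $\mathbb{E}$-triangle $A_2 \to M'\to B_1$ and compatibility identities analogous to (i)--(iii) of (ET4). A careful diagram chase, using (ET3)$^{\mathrm{op}}$ and the uniqueness of realizations up to the standard equivalence of conflations, yields an isomorphism $M'\cong M$ under which the resulting morphisms become the desired $m_2:A_2\to M$ and the same $e_2$ obtained above; the compatibility then gives $\mathfrak{s}(y_1^{*}\delta_2) = [A_2\to M\to B_1]$, while the remaining compatibility combined with the bilinearity of $\mathbb{E}$ yields $m_{1*}\delta_1 + m_{2*}\delta_2 = 0$. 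I expect the identification $M'\cong M$ to be the main technical obstacle, since (ET4)$^{\mathrm{op}}$ produces its object abstractly; once this step is in hand, both the remaining $\mathbb{E}$-triangle and the additivity identity drop out, and part (2) follows by the symmetric construction.
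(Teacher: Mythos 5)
This lemma is not proved in the paper; it is cited verbatim from Nakaoka--Palu \cite[Proposition~3.15]{HY}, so there is no in-paper argument to compare against. Evaluating your proposal on its own merits: the opening move is correct and is indeed the standard start — realize $y_2^{*}\delta_1$ as $A_1\stackrel{m_1}{\to}M\stackrel{e_1}{\to}B_2$, then use the realization axiom on the morphism of extensions $(1_{A_1},y_2)\colon y_2^{*}\delta_1\to\delta_1$ to obtain $e_2\colon M\to B_1$ with $e_2m_1=x_1$ and $y_1e_2=y_2e_1$.

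The main step, however, misstates what $\rm(ET4)^{op}$ produces. Applied to the two $\mathbb{E}$-triangles $A_1\to M\stackrel{e_1}{\to}B_2$ and $A_2\to B_2\stackrel{y_2}{\to}C$, which share $B_2$ as required (so the composable deflations are $e_1$ and $y_2$), the axiom yields an object $E$ that is a $\xi$-cocone of the composite deflation $y_2e_1\colon M\to C$, together with $\mathbb{E}$-triangles $E\to M\to C$ and $A_1\to E\to A_2$ plus the compatibilities (i)--(iii). The object $B_1$ does not occur anywhere in this output: it is carried by the third $\mathbb{E}$-triangle $A_1\to B_1\to C$, which is not one of the two inputs. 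So the claimed output ``an $\mathbb{E}$-triangle $A_2\to M'\to B_1$'' is simply not what $\rm(ET4)^{op}$ gives you, and the subsequent steps (identifying $M'$ with $M$, extracting $\mathfrak{s}(y_1^{*}\delta_2)=[A_2\to M\to B_1]$, and deducing $m_{1*}\delta_1+m_{2*}\delta_2=0$) are built on this incorrect reading.

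Even setting that aside, you explicitly defer the two genuinely hard points --- the identification of the abstractly produced object with $M$, and the additivity relation $m_{1*}\delta_1+m_{2*}\delta_2=0$ --- to a ``careful diagram chase'' and ``bilinearity of $\mathbb{E}$,'' which is where the real content of Nakaoka--Palu's Proposition~3.15 lies. To repair the argument you would need, after the first (correct) step, to relate the $\rm(ET4)^{op}$ output $E\to M\to C$ to the $\mathbb{E}$-triangle $A_1\to B_1\stackrel{y_1}{\to}C$ using $y_1e_2=y_2e_1$ and $\rm(ET3)^{op}$, and then extract $A_2\to M\stackrel{e_2}{\to}B_1$ realizing $y_1^{*}\delta_2$ along with the sign relation from the $\rm(ET4)^{op}$ compatibilities; this is several nontrivial steps beyond what you have written.
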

Here we introduce the weak idempotent completeness condition (WIC) for an extriangulated categories.

\begin{cond}\citep[Condition 5.81]{HY} (WIC Condition)\label{WIC}
	Consider the following conditions.
	
	(1) Let $f\in\Mcc(A,B),g\in\Mcc(B,C)$ be any pair of morphisms. If $gf$ is an inflation, then so is $f$.
	
 	(2) Let $f\in\Mcc(A,B),g\in\Mcc(B,C)$ be any pair of morphisms. If $gf$ is a deflation, then so is $g$.
\end{cond}

Hu, Zhang and Zhou formally defined the following concepts in \citep{JDP}.

Let $\xi$ be a class of $\mathbb{E}$-triangles. One says $\xi$ is closed under base change if for any $\mathbb{E}$-triangle
	\[ 
	A\stackrel{x}\longrightarrow B \stackrel{y}\longrightarrow C  \stackrel{\delta}\dashrightarrow 
		\in \xi
	\]
and any morphism $c:C'\rightarrow C$, then any  $\mathbb{E}$-triangle $A\stackrel{x'}\longrightarrow B' \stackrel{y'}\longrightarrow C'  \stackrel{c^*\delta}\dashrightarrow $ belongs to $\xi$. Dually, one says $\xi$ is closed under cobase change if for any $\mathbb{E}$-triangle
	\[ 
	A\stackrel{x}\longrightarrow B \stackrel{y}\longrightarrow C  \stackrel{\delta}\dashrightarrow 
		\in \xi
	\]
and any morphism $a:A\rightarrow A'$, then any $\mathbb{E}$-triangle $A'\stackrel{x'}\longrightarrow B' \stackrel{y'}\longrightarrow C  \stackrel{a_*\delta}\dashrightarrow$ belongs to $\xi$.

A class of $\mathbb{E}$-triangles $\xi$ is called saturated if in the situation of Lemma \ref{BH}(1), whenever  $A_2\stackrel{x_2}\longrightarrow B_2 \stackrel{y_2}\longrightarrow C  \stackrel{\delta_2}\dashrightarrow$ and $A_1\stackrel{m_1}\longrightarrow M \stackrel{e_2}\longrightarrow B_2  \stackrel{y_2^*\delta_1}\dashrightarrow$ belong to $\xi$, then the
	$\mathbb{E}$-triangle $A_1\stackrel{x_1}\longrightarrow B_1 \stackrel{y_1}\longrightarrow C  \stackrel{\delta_1}\dashrightarrow$ belongs to  $\xi$. 

An $\mathbb{E}$-triangle $A\stackrel{x}\longrightarrow B \stackrel{y}\longrightarrow C  \stackrel{\delta}\dashrightarrow $ is called split if $\delta=0$. It is easy to see that it is split if and only if $x$ is section or $y$ is retraction. And we denote the subclass of $\mathbb{E}$-triangles  by $\Delta_0$ which consisting of the split $\mathbb{E}$-triangle. 
\begin{defn}\label{ZL}\citep[Definition 3.1]{JDP}
Let $\xi$ be a class of $\mathbb{E}$-triangles which is closed under isomorphisms. $\xi$ is called a proper class  of $\mathbb{E}$-triangles if the following conditions holds:
	
	(1) $\xi$ is closed under finite coproducts and $\Delta_0 \subseteq \xi$.
	
	(2) $\xi$ is closed under base change and cobase change.
	
	(3) $\xi$ is saturated.
\end{defn}

Following lemma will be used many times in this paper.
\begin{lem}\label{Thm3.2}Lemma \ref{Thm3.2}
 Let $\xi$ be a class of $\mathbb{E}$-triangles which is closed under isomorphisms.
Set $\mathbb{E}_\xi:=\mathbb{E}|_\xi$, that is, $$\mathbb{E}_\xi(C, A)=\{\delta\in\mathbb{E}(C, A)~|~\delta~ \textrm{is realized as an $\mathbb{E}$-triangle}\xymatrix{A\ar[r]^x&B\ar[r]^y&C\ar@{-->}[r]^{\delta}&}~\textrm{in}~\xi\}$$ for any $A, C\in\mathcal{C}$, and $\mathfrak{s}_\xi:=\mathfrak{s}|_{\mathbb{E}_\xi}$. Then $\xi$ is a  proper class  of $\mathbb{E}$-triangles if and only if $(\mathcal{C}, \mathbb{E}_\xi, \mathfrak{s}_\xi)$ is an extriangulated category.
\end{lem}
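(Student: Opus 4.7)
The strategy is to prove the two implications separately, verifying the extriangulated axioms on one side and the proper-class conditions on the other.

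\emph{$(\Rightarrow)$ Suppose $\xi$ is a proper class.} I plan to check each of (ET1)--(ET4) and their duals for $(\mathcal{C}, \mathbb{E}_\xi, \mathfrak{s}_\xi)$. For (ET1), I must see that $\mathbb{E}_\xi(C,A)$ is a subgroup of $\mathbb{E}(C,A)$ and that $\mathbb{E}_\xi$ inherits functoriality. The zero extension lies in $\mathbb{E}_\xi$ since $\Delta_0 \subseteq \xi$; functoriality in each variable is exactly closure of $\xi$ under base change and cobase change; closure under the Baer sum follows because $\delta_1 + \delta_2$ is obtained from $\delta_1 \oplus \delta_2 \in \xi$ (coproduct closure) by base change along the diagonal $\Delta_C$ and cobase change along the codiagonal $\nabla_A$. (ET2) is immediate: $\mathfrak{s}_\xi$ is a restriction of an additive realization. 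For (ET3) and (ET3)$^{\mathrm{op}}$ no new membership has to be checked -- both input triangles already lie in $\xi$, and the comparison morphism is supplied by the corresponding axiom in $(\mathcal{C}, \mathbb{E}, \mathfrak{s})$.

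The key step is (ET4). Given $\mathbb{E}_\xi$-triangles $A \xrightarrow{f} B \xrightarrow{f'} D$ realizing $\delta$ and $B \xrightarrow{g} C \xrightarrow{g'} F$ realizing $\delta'$, I apply (ET4) inside $(\mathcal{C}, \mathbb{E}, \mathfrak{s})$ to produce $E$, the $3\times 3$ diagram, and an $\mathbb{E}$-extension $\delta'' \in \mathbb{E}(E,A)$ with $d^{*}\delta'' = \delta$ and $f_{*}\delta'' = e^{*}\delta'$. The outer column $D \xrightarrow{d} E \xrightarrow{e} F$ realizes $f'_{*}\delta'$ and thus belongs to $\xi$ by cobase-change closure. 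The nontrivial assertion is that $\delta'' \in \mathbb{E}_\xi$. I will arrange the middle row $A \to C \to E$ and the first row $A \to B \to D$ as the output of Lemma~\ref{BH}, so that the compatibilities $d^{*}\delta'' = \delta \in \xi$ and $f_{*}\delta'' = e^{*}\delta' \in \xi$ identify one of the ``cross'' triangles in the diagram with a member of $\xi$; saturation then forces $\delta'' \in \xi$. (ET4)$^{\mathrm{op}}$ is handled symmetrically.

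\emph{$(\Leftarrow)$ Suppose $(\mathcal{C}, \mathbb{E}_\xi, \mathfrak{s}_\xi)$ is extriangulated.} Biadditivity of $\mathbb{E}_\xi$ as a subfunctor gives $c^{*}\delta = \mathbb{E}_\xi(c,A)(\delta) \in \mathbb{E}_\xi$ for any $\delta \in \xi$ and $c : C' \to C$, yielding closure under base change; cobase change is dual. The additivity of $\mathfrak{s}_\xi$ together with $\mathfrak{s}_\xi(0) = 0$ yields $\Delta_0 \subseteq \xi$ and closure under finite coproducts. For saturation, I start from the Lemma~\ref{BH}(1) configuration with $\delta_2$ and $y_2^{*}\delta_1$ in $\xi$ and interpret the resulting octahedron as an instance of (ET4) (or its dual) applied inside $(\mathcal{C}, \mathbb{E}_\xi, \mathfrak{s}_\xi)$; the axiom then exhibits $\delta_1$ as an $\mathbb{E}_\xi$-extension. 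The main obstacle in the whole proof is the forward direction's (ET4): one has to pick the correct Lemma~\ref{BH} configuration so that the compatibility relations of (ET4) line up with the hypotheses of the saturation axiom; everything else is a bookkeeping consequence of the three closure properties defining a proper class.
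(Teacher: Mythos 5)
The paper does not actually prove this lemma; it is quoted from Hu--Zhang--Zhou \cite{JDP} with no argument supplied, so there is no in-paper proof to compare against and your proposal must stand on its own. Your top-level strategy (check (ET1)--(ET4) from the three proper-class axioms, and read the three axioms back out of (ET1)--(ET4)) is the right one, and your treatment of (ET1), (ET2), (ET3) and of base/cobase change, $\Delta_0\subseteq\xi$, and coproducts in the converse is fine. But the two genuinely nontrivial steps --- (ET4) in the forward direction and saturation in the converse --- are each compressed into a single sentence that does not, as written, close the argument.

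In the (ET4) step you apply Lemma~\ref{BH} to the two rows $A\to C\to E$ and $A\to B\to D$; these share the object $A$, so this is an instance of Lemma~\ref{BH}(2), whose cross triangles are $B\to M\to E$ (realizing $f_*\delta''$) and $C\to M\to D$ (realizing $h_*\delta$), and compatibility (iii) of (ET4) does put $B\to M\to E$ in $\xi$. The trouble is that the saturation axiom, as defined in the paper, is phrased for the Lemma~\ref{BH}(1) configuration --- two $\mathbb{E}$-triangles ending at the \emph{same} object --- and your two triangles end at $E$ and $D$ respectively. The missing ingredient is the Lemma~\ref{BH}(2) compatibility $e_1^*\delta''+e_2^*\delta=0$: since $\delta\in\xi$ and $\xi$ is closed under base change and negation, this gives $e_1^*\delta''=-e_2^*\delta\in\xi$, and only \emph{then} can one apply Lemma~\ref{BH}(1) to the pair $\delta''$ and $f_*\delta''=e^*\delta'$ (both ending at $E$) and feed the result into the saturation axiom. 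Your phrase ``saturation then forces $\delta''\in\xi$'' skips precisely these two steps, which are the heart of the proof. The converse has the symmetric problem: applying (ET4)$^{\mathrm{op}}$ inside $(\mathcal{C},\mathbb{E}_\xi,\mathfrak{s}_\xi)$ to the composable $\xi$-deflations $M\to B_2\to C$ produces an $\mathbb{E}_\xi$-extension realized by a triangle $A_1\oplus A_2\to M\to C$ (the left column realizes $(y_2x_2)^*\delta_1=0$ because $y_2x_2=0$), \emph{not} $\delta_1\in\mathbb{E}(C,A_1)$ itself. Recovering $\delta_1$ requires pushing forward along the projection $A_1\oplus A_2\to A_1$, using the long exact sequence in $\mathbb{E}(-,A_1)$ to see that $p_*\delta''$ and $\delta_1$ differ by a class of the form $\phi_*\delta_2$ for some $\phi\colon A_2\to A_1$, and then invoking the already-established subgroup structure of $\mathbb{E}_\xi(C,A_1)$. ``The axiom then exhibits $\delta_1$ as an $\mathbb{E}_\xi$-extension'' is not what (ET4)$^{\mathrm{op}}$ gives you; the reduction from $\delta''$ to $\delta_1$ is a genuine gap.
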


Recall that a morphism $x:A\to B$ is called ($\xi$-)inflation, if there exists an $\Mbe$-triangle $A\stackrel{x}\longrightarrow B\longrightarrow C\dashrightarrow $ (in $\xi$). And  a morphism $y: B\to C $ is called ($\xi$-)deflation,  if there exists an $\Mbe$-triangle $A\longrightarrow B\stackrel{y}\longrightarrow C\dashrightarrow$ (in $\xi$).

\begin{defn}\citep[Definition 4.1]{JDP} An object $P\in \mathcal{C}$ is called $\xi$-projective if for any $\mathbb{E}$-triangle
	\[
		A\stackrel{x}\longrightarrow B \stackrel{y}\longrightarrow C  \stackrel{\delta}\dashrightarrow 
	\]
	lies in $\xi$, the induced sequence of abelian groups
	\[
	0\longrightarrow\mathcal{C}(P,A)\longrightarrow \mathcal{C}(P,B)\longrightarrow\mathcal{C}(P,C)\longrightarrow 0
	\]
is exact. Dually, we have the definition of $\xi$-injective.
\end{defn}

We denote $\P$ (resp. $\mathcal{I}(\xi)$) the class of $\xi$-projective (resp. $\xi$-injective) objects of $\Mcc$. An extriangulated category $(\mathcal{C},\mathbb{E},\mathfrak{s})$ is said to have enough $\xi$-projectives  (resp. enough $\xi$-injectives ) provided that for each object $A\in \mathcal{C}$ there exists an $\mathbb{E}$-triangle $K\longrightarrow P\longrightarrow A\dashrightarrow$ (resp. $A\longrightarrow I\longrightarrow K\dashrightarrow$) in $\xi$ with $P\in\mathcal{P}(\xi)$(resp. $I\in\mathcal{I}(\xi)$). One can also find the above concept in \citep[Section 4]{JDP}.

\begin{defn}\citep[Definition 4.4]{JDP}
	An unbounded complex $\mathbf{X}$ is called $\xi$-exact if   $\mathbf{X}$ is a diagram
	\[
		\cdots \longrightarrow X_1\xrightarrow {d_1}X_0 \xrightarrow{d_0}X_{-1}\longrightarrow\cdots
	\]
in $\mathcal{C}$ such that for each integer $n$, there exists an $\mathbb{E}$-triangle $K_{n+1}\stackrel{g_n}\longrightarrow X_n\stackrel{f_n}\longrightarrow K_n\stackrel{\delta_n}\dashrightarrow$ in $\xi$ and $d_n=g_{n-1}f_n$. These $\mathbb{E}$-triangles are called the $\xi$-resolution $\mathbb{E}$-triangles of the $\xi$-exact complex $\mathbf{X}$.
\end{defn}

\begin{defn}\citep[Definition 4.5 and 4.6]{JDP}
Let $\mathcal{W}$ be a class of objects in $\Mcc$. An $\mathbb{E}$-triangle $A\longrightarrow B\longrightarrow C\dashrightarrow$ in $\xi$ is called to be $\Mcc(-,\mathcal{W})$-exact (respectively $\Mcc(\mathcal{W},-)$-exact) if for any $W\in\mathcal{W}$, the induced sequence of abelian group $0 \rightarrow \Mcc(C,W)\rightarrow\Mcc(B,W)\rightarrow\Mcc(A,W)\rightarrow0$ (respectively $0 \rightarrow \Mcc(W,A)\rightarrow\Mcc(W,B)\rightarrow\Mcc(W,C)\rightarrow0$) is exact in $\mathbf{Ab}$.

A complex $\mathbf{X}$ is called $\Mcc(-,\mathcal{W})$-exact  (respectively $\Mcc(\mathcal{W},-)$-exact) if it is a $\xi$-exact complex with $\Mcc(-,\mathcal{W})$-exact $\xi$-resolution $\mathbb{E}$-triangles (respectively $\Mcc(\mathcal{W},-)$-exact $\xi$-resolution $\mathbb{E}$-triangles).

\end{defn}
\begin{defn}
	A $\xi$-projective resolution of an object $A\in\Mcc$ is a  right bounded and  $\xi$-exact complex
	\[
		\cdots\longrightarrow P_{n}\longrightarrow P_{n-1}\longrightarrow \cdots\longrightarrow P_{1}\longrightarrow P_{0}\longrightarrow A\longrightarrow 0
	\]
in $\Mcc$  with $P_{n}\in\P$ for all $n\geqslant0$. Dually, we can define the $\xi$-injective coresolution.
\end{defn}
 
 \begin{defn}\citep[Definition 3.2]{JZP}
	Let $A$ and $B$ be objects in $\mathcal{C}$.
	
	(1) If we choose a $\xi$-projective resolution $\mathbf{P}\longrightarrow A$ of $A$, then for any  integer $n\geqslant 0$, the $\xi$-cohomology groups $\xi \text{xt}_{\mathcal{P}(\xi)}^n(A, B)$ are defined as
	\[
	\xi \text{xt}_{\mathcal{P}(\xi)}^n(A, B)=H^n(\mathcal{C}(\mathbf{P},B)).
	\]
	
	(2) If we choose a $\xi$-injective coresolution $B\longrightarrow\mathbf{I}$ of $B$, then for any  integer $n\geqslant 0$, the $\xi$-cohomology groups $\xi \text{xt}_{\mathcal{I}(\xi)}^n(A, B)$ are defined as
	\[
	\xi \text{xt}_{\mathcal{I}(\xi)}^n(A, B)=H^n(\mathcal{C}(A,\mathbf{I})).
	\]
Then there exists an isomorphism $\xi \text{xt}_{\mathcal{P}(\xi)}^n(A, B)\backsimeq \xi \text{xt}_{\mathcal{I}(\xi)}^n(A, B)$, and denoting the isomorphism class of this abelian group by $\xi \text{xt}_{\xi}^n(A,B)$.
	\end{defn}

\begin{lem}\label{LZHL}\citep[Lemma 3.4 and its comment]{JZP}
	If $A\stackrel{x}\longrightarrow B \stackrel{y}\longrightarrow C  \stackrel{\delta}\dashrightarrow$ is an $\mathbb{E}$-triangle in $\xi$, then for any objects $X$  in $\mathcal{C}$, we have the following long exact sequences in $\mathbf{Ab}$
	\[
	0\longrightarrow \ext^0(X,A)\longrightarrow\ext^0(X,B)\longrightarrow\ext^0(X,C)\longrightarrow\ext^1(X,A)\longrightarrow\cdots
	\]
	and
	\[
	0\longrightarrow\ext^0(C,X)\longrightarrow\ext^0(B,X)\longrightarrow\ext^0(A,X)\longrightarrow \ext^1(C,X)\longrightarrow\cdots
	\]
For any objects $A$ and $B$, there is always a natural map $\varphi:\mathcal{C}(A,B)\rightarrow\ext^0(A,B)$, which is an isomorphism if $A\in \P$ or $B\in \mathcal{I}(\xi)$.
\end{lem}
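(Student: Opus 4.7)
My plan is to derive each long exact sequence from the standard procedure of converting a short exact sequence of cochain complexes into its long exact sequence in cohomology, adapted to the extriangulated setting. For the first sequence, I would fix a $\xi$-projective resolution $\mathbf{P}=(P_n,d_n)\longrightarrow X$ and apply $\Mcc(P_n,-)$ pointwise to the given $\mathbb{E}$-triangle $A\stackrel{x}{\longrightarrow}B\stackrel{y}{\longrightarrow}C\stackrel{\delta}{\dashrightarrow}$ in $\xi$. Since $P_n\in\P$ and the triangle lies in $\xi$, the defining property of a $\xi$-projective object yields, for every $n$, a short exact sequence
\[
0\longrightarrow\Mcc(P_n,A)\longrightarrow\Mcc(P_n,B)\longrightarrow\Mcc(P_n,C)\longrightarrow 0
\]
of abelian groups. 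Naturality in $P_n$ with respect to the differentials $d_n$ assembles these into a short exact sequence of complexes $0\to\Mcc(\mathbf{P},A)\to\Mcc(\mathbf{P},B)\to\Mcc(\mathbf{P},C)\to 0$, whose associated long exact sequence in cohomology is, by the definition of $\xi\text{xt}^n_{\P}(X,-)$ and its identification with $\ext^n(X,-)$ recalled just above, precisely the first claimed sequence.

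The second long exact sequence is obtained dually: fix a $\xi$-injective coresolution $X\longrightarrow\mathbf{I}=(I^n,d^n)$, apply $\Mcc(-,I^n)$ to the same $\mathbb{E}$-triangle, and use $I^n\in\mathcal{I}(\xi)$ to produce a degree-wise short exact sequence that assembles into $0\to\Mcc(C,\mathbf{I})\to\Mcc(B,\mathbf{I})\to\Mcc(A,\mathbf{I})\to 0$; its long exact sequence in cohomology is then the second claim. To construct the natural map $\varphi:\Mcc(A,B)\to\ext^0(A,B)$, I would use the augmentation $P_0\to A$ of a fixed $\xi$-projective resolution of $A$: each $f\in\Mcc(A,B)$ is sent to the cohomology class of the $0$-cocycle obtained by composing $P_0\to A\stackrel{f}{\longrightarrow}B$, and independence of the chosen resolution follows from the standard comparison theorem for $\xi$-projective resolutions.

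For the isomorphism claim, if $A\in\P$ then the stalk complex $\cdots\to 0\to A\stackrel{\mathrm{id}}{\longrightarrow} A\to 0$ is a valid $\xi$-projective resolution of $A$, because its $\xi$-resolution triangles are all split and $\Delta_0\subseteq\xi$ by Definition \ref{ZL}; hence $\ext^0(A,B)=H^0\bigl(\Mcc(A,B)\to 0\bigr)=\Mcc(A,B)$ and $\varphi$ is visibly the identity. The case $B\in\mathcal{I}(\xi)$ is symmetric via the stalk injective coresolution of $B$. The only substantive step is the degree-wise short exactness underlying both constructions: this is exactly the defining exactness property of $\xi$-projective (respectively $\xi$-injective) objects against $\mathbb{E}$-triangles in $\xi$, which is where the hypothesis $\delta\in\xi$ is used. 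Everything else is the formal snake-lemma machinery, and independence of the chosen resolution is already absorbed into the standing identification $\xi\text{xt}^n_{\P}\cong\ext^n\cong\xi\text{xt}^n_{\mathcal{I}(\xi)}$ built into the definition.
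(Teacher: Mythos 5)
Your proof is correct and proceeds exactly as one would expect: degreewise short exactness of $\Mcc(P_n,-)$ (resp.\ $\Mcc(-,I^n)$) applied to the $\Mbe$-triangle follows immediately from the defining property of $\xi$-projective (resp.\ $\xi$-injective) objects, the resulting short exact sequence of complexes yields the long exact cohomology sequence, and the stalk-complex argument correctly handles the isomorphism cases. Note that the paper itself gives no proof for this lemma but cites it from [JZP, Lemma 3.4 and its comment], so there is nothing in the paper to compare against; your argument is the standard one that the cited source employs.
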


	Let $\Mcx$ be a subcategory of $\Mcc$. We define
\[
	\begin{aligned}
	& \mathcal{X}^{\perp}:=\left\{C \in \mathcal{C} \mid \ext^{n\geq 1}(X, C)=0 \text { for all } X \in \mathcal{X}\right\} \\
	& ^{\perp} \mathcal{X}:=\left\{C \in \mathcal{C} \mid \ext^{n\geq 1}(C, X)=0 \text { for all } X \in \mathcal{X} \right\}
	\end{aligned}
\]
For two subcategories $\Mcx$ and $\mathcal{Y}$ of $\Mcc$, we say $\Mcx\perp\mathcal{Y}$ if $\Mcx\subseteq {}^\perp\mathcal{Y}$ ( equivalently $\mathcal{Y}\subseteq \Mcx^\perp$). 

\begin{defn}
	Let $\Mcx$ be a subcategory of $\Mcc$. An $\Mcx$-resolution of an object $M$ is a $\xi$-exact complex in $\Mcc$
\[
\mathbf{X}:\cdots \>X_2\>X_1\>X_0\>M\>0
\]
where $X_n$ is in $\Mcx$ for each integer $n$. In this case, for any integer $n\geq 1$, there exists an $\Mbe$-triangle 
\[
Kn\>X_{n-1}\>K_{n-1}\dashrightarrow (\text{set\ } K_0=M)
\]
in $\xi$ which is the $\xi$-resolution $\Mbe$-triangle of $\mathbf{X}$. Then the object $K_n$ is called an $n$th $\xi$-$\Mcx$-syzygy of $M$, denoted by $\Omega_{\Mcx}^n(M)$.  An object $M$ of $\Mcc$ is said to have $\Mcx$-resolution dimension $\leq n$, denoted by $\resdim{X}{M}\leq n$, if there is an $\Mcx$-resolution of the form
\[
0\>X_n\>X_{n-1}\>\cdots \>X_1\>X_0\>M\>0
\]
of $M$. If $n$ is the least such number, then we set $\resdim{X}{M}=n$ and if there is no such $n$, we set $\resdim{X}{M}=\infty$. And we define
\[
 \res{\Mcx}:=\left\{M \in \mathcal{C} \mid \text{\ $M$ has an $\Mcx$-resolution}\right\} \text{\ and\ } \widehat{\Mcx}:=\left\{M \in \mathcal{C} \mid \resdim{X}{M}<\infty \right\}.
\]
\end{defn}

\begin{defn}
Let $\mathcal{H}$ and $\Mcx$ be two subcategories of $\Mcc$ with $\mathcal{H}\subseteq \Mcx$. We say $\mathcal{H}$ is a $\xi$-cogenerator of $\Mcx$ if for any object $X\in \Mcx$, there is an $\Mbe$-triangle
\[
	X\> H \> X'\dashrightarrow 
\]
in $\xi$ with $H\in\mathcal{H}$ and $X'\in\Mcx$. Moreover, a $\xi$-cogenerator $\mathcal{H}$ is called $\Ext$-injective if $\Mcx\perp\mathcal{H}$ and $\ext^0(H,H')\cong \Mcc(H,H')$ for any $H,H'\in\Mch$.
\end{defn}

\begin{defn}
Assume that $\mathcal{X}$ is a subcategory of $\Mcc$, and let $A\stackrel{x}\longrightarrow B \stackrel{y}\longrightarrow C \stackrel{\delta}\dashrightarrow $ be any $\Mbe$-triangle in $\xi$. We call that

(1) $A$ the $\xi$-cocone of $y:B \to C$, and  call $C$ the cone of $x:A\to B$.

(2) $\Mcx$ is closed under $\xi$-extensions, if $A$ and $C$ are both in $\xi$, it holds that $B\in \Mcx$.

(3) $\Mcx$ is closed under $\xi$-cocones (resp. $\xi$-cones), if $B$ and $C$ lie in $\Mcx$ (resp. $A$ and $B$ lie in $\Mcx$), it holds that $A\in\Mcx$ (resp. $C\in\Mcx$).
\end{defn}

\section{Quasi-resolving subcategories}
\quad~In this section, we introduce and investigate the notion of quasi-resolving subcategories for an extriangulated category $\Mcc$ endowed with a proper class $\xi$ of $\Mbe$-triangles. We establish fundamental properties of the  $n$th $\xi$-$\Mcx$-syzygy of an object in a quasi-resolving subcategory $\Mcx$.

\begin{defn}
Let $\mathcal{X}$ be a subcategory of an extriangulated category $\Mcc$ and $\Px=\Mcx\cap\P$. Then $\Mcx$ is called quasi-resolving if it satisfies the following conditions.

(1)  $\Mcx$ is closed under $\xi$-extensions.

(2) $\Mcx$ is closed under $\xi$-cocones.

(3) $\Mcx\subseteq \res{\Px}$.   
\end{defn}

\begin{exam} 
Let $P\in\P$. Denote
\[
\mathcal{SF}(P):=\left\{ X\in\Mcc \mid  X \oplus P^m \cong P^n \text{\ for some\ } m,n\in\mathbb{N}^+\right\} 
\]
Then it is easy to see $\mathcal{SF}(P)$ is a quasi-resolving subcategory of $\Mcc$. Moreover, the  subcategory  
\[
P^\mathbb{N}:=\left\{ P^n \mid n\in\mathbb{N}^+\right\}
\] 
is a $\Ext$-injective cogenerator of $\mathcal{SF}(P)$.
\end{exam}

\begin{prop}\label{ZGQ}
Let $\Mcx$ be a quasi-resolving subcategory of $\Mcc$, then $\res{\Mcx}=\res{\Px}$.
\end{prop}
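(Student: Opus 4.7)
The first inclusion $\res{\Px}\subseteq\res{\Mcx}$ is immediate from $\Px\subseteq\Mcx$, since a $\Px$-resolution is automatically an $\Mcx$-resolution. For the reverse inclusion I would take $M\in\res{\Mcx}$, fix an $\Mcx$-resolution $\cdots \to X_1 \to X_0 \to M \to 0$ with syzygies $K_n$ and $\xi$-resolution $\Mbe$-triangles $K_{n+1}\to X_n \to K_n \dashrightarrow$ (where $K_0=M$), and build a $\Px$-resolution of $M$ inductively, refining one term at a time.

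The basic inductive step proceeds as follows. Since $X_0\in\Mcx\subseteq\res{\Px}$, the first step of a $\Px$-resolution of $X_0$ yields a $\xi$-triangle $Y_0\to P_0\to X_0\dashrightarrow$ with $P_0\in\Px$. Condition $(2)$ of the definition of quasi-resolving, applied to $P_0,X_0\in\Mcx$, then forces $Y_0\in\Mcx$. Composing the two $\xi$-deflations $P_0\to X_0$ and $X_0\to M$ and invoking the octahedral axiom in the extriangulated category $(\Mcc,\Mbe_\xi,\mathfrak{s}_\xi)$ guaranteed by Lemma \ref{Thm3.2} (equivalently, a double application of (ET4) and (ET4)$^{\mathrm{op}}$), I obtain an object $L_1$ fitting into $\xi$-triangles
\[
L_1 \to P_0 \to M \dashrightarrow \qquad\text{and}\qquad Y_0 \to L_1 \to K_1 \dashrightarrow .
\]

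The heart of the argument is to check $L_1\in\res{\Mcx}$, so that the induction can continue. For this I would apply Lemma \ref{BH}(1) to the two $\xi$-triangles ending at $K_1$, namely the next step $K_2\to X_1\to K_1\dashrightarrow$ of the given $\Mcx$-resolution and the triangle $Y_0\to L_1\to K_1\dashrightarrow$ just produced, obtaining an object $M_1$ and $\xi$-triangles $K_2\to M_1\to L_1\dashrightarrow$ and $Y_0\to M_1\to X_1\dashrightarrow$. Closure of $\Mcx$ under $\xi$-extensions (condition $(1)$), together with $Y_0,X_1\in\Mcx$, yields $M_1\in\Mcx$; this is the first term of an $\Mcx$-resolution of $L_1$, whose remaining terms $X_2,X_3,\ldots$ can be inherited from the given $\Mcx$-resolution of $K_1$. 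Thus $L_1\in\res{\Mcx}$.

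Iterating the entire construction with $L_1$ in place of $M$ produces $\xi$-triangles $L_{n+1}\to P_n\to L_n\dashrightarrow$ with $P_n\in\Px$ and $L_n\in\res{\Mcx}$ for every $n\geq 0$ (where $L_0:=M$); their concatenation is a $\xi$-exact complex $\cdots\to P_1\to P_0\to M\to 0$, which is a $\Px$-resolution of $M$, so $M\in\res{\Px}$. The main obstacle is the octahedral construction of $L_1$ and the subsequent verification that $L_1\in\res{\Mcx}$: these force a careful combination of all three conditions defining a quasi-resolving subcategory with the $\Mbe$-triangle calculus provided by (ET4) and Lemma \ref{BH}(1).
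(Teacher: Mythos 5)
Your proposal is correct and follows essentially the same route as the paper: take $X_0\in\Mcx\subseteq\res{\Px}$ to obtain a $\xi$-triangle $Y_0\to P_0\to X_0\dashrightarrow$, use closure under $\xi$-cocones to place $Y_0\in\Mcx$, apply $(\mathrm{ET4})^{\mathrm{op}}$ to the composite deflation $P_0\to X_0\to M$ to produce $L_1\to P_0\to M\dashrightarrow$ and $Y_0\to L_1\to K_1\dashrightarrow$, then apply Lemma \ref{BH}(1) at $K_1$ and closure under $\xi$-extensions to show $L_1\in\res{\Mcx}$, and iterate. The paper's objects $L_1$, $W_1$, $Y_1$ correspond to your $Y_0$, $L_1$, $M_1$ respectively; no substantive difference.
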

\begin{proof}
Obviously, $\res{\Px}\subseteq\res{\Mcx}$. Conversely, for any $M\in\res{\Mcx}$, there exists an $\Mcx$-resolution of $M$ 
\[
\cdots \>X_2\>X_1\>X_0\>M\>0
\]
with $\xi$-resolution $\Mbe$-triangle $Kn\>X_{n-1}\>K_{n-1}\dashrightarrow (\text{set\ } K_0=M)$ in $\xi$ for all $n\geq 1$. Since $\Mcx$ is  quasi-resolving, we have an $\Mbe$-triangle $L_1\>P_0\>X_0\dashrightarrow$ in $\xi$ with $P_0\in\Px$ and $L_1\in\Mcx$. It follows from Lemma \ref{Thm3.2} and $\rm(ET4)^{op}$ that we have the following commutative diagram 
\[
		\xymatrix{
			L_1\ar@{=}[d] \ar[r] & W_1 \ar[r]\ar[d] & K_1 \ar[d] \ar@{-->}[r] &  \\
			L_1\ar[r] & P_0\ar[d] \ar[r] & X_0\ar[d] \ar@{-->}[r]& \\
			& M\ar@{=}[r]\ar@{-->}[d]&M\ar@{-->}[d] \\
			& & &  }
\] 
where all  rows and columns are  $\mathbb{E}$-triangles in $\xi$. Hence there exists a commutative diagram 
\[
	\xymatrix{
		&K_2\ar@{=}[r]\ar[d]&K_2\ar[d]\\
		L_1\ar[r]\ar@{=}[d]&Y_1\ar[r]\ar[d]&X_1\ar[d]\ar@{-->}[r]&\\
		L_1\ar[r]&W_1\ar[r]\ar@{-->}[d]&K_1\ar@{-->}[r]\ar@{-->}[d]&\\
		&&&
	}
\]	
made of $\mathbb{E}$-triangles in $\xi$ by Lemma \ref{BH} (1) and Lemma \ref{Thm3.2}. Note that $L_1$ and $X_1$ are both in $\Mcx$, then we have $Y_1\in\Mcx$ since $\Mcx$ is closed under $\xi$-extensions. It implies that $W_1\in\res{\Mcx}$ since $K_2\in\res{\Mcx}$. Therefore, we obtain an $\Mbe$-triangle $W_1\>P_0\>M\dashrightarrow$ in $\xi$ with $P_0\in\Px$ and $W_1\in\res{\Mcx}$. Proceeding in this manner, one can get a $\xi$-exact complex
\[
\cdots \>P_2\>P_1\>P_0\>M\>0
\]
with $P_n\in\Px$ for all $n\geq 0$, which implies that $M\in\res{\Px}$. Thus $\res{\Mcx}\subseteq\res{\Px}$. So we have $\res{\Mcx}=\res{\Px}$.
\end{proof}

\begin{cor}
Let $\Mcx$ be a quasi-resolving subcategory of $\Mcc$, then $\widehat{\Mcx}\subseteq \res{\Px}$.
\end{cor}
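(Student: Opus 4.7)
The plan is to show $\widehat{\Mcx}\subseteq \res{\Mcx}$ and then invoke Proposition \ref{ZGQ} to replace $\res{\Mcx}$ by $\res{\Px}$. The only non-trivial point is to extend a finite $\Mcx$-resolution into an (infinite) $\Mcx$-resolution in the sense of the definition, which requires $0\in\Mcx$.

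First I would verify that $0\in\Mcx$. Pick any $X\in\Mcx$ (assuming $\Mcx$ is nonempty, as is standard) and consider the split $\Mbe$-triangle $0\>X\xrightarrow{=}X\dashrightarrow$, which lies in $\Delta_0\subseteq\xi$. Since $X\in\Mcx$ appears in both middle and right positions, closure of $\Mcx$ under $\xi$-cocones yields $0\in\Mcx$.

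Next, given $M\in\widehat{\Mcx}$ with $\resdim{X}{M}=n<\infty$, fix a $\xi$-exact complex
\[
0\>X_n\>X_{n-1}\>\cdots\>X_0\>M\>0
\]
with all $X_i\in\Mcx$. Padding on the left by zeros produces a complex
\[
\cdots\>0\>0\>X_n\>X_{n-1}\>\cdots\>X_0\>M\>0,
\]
whose $\xi$-resolution $\Mbe$-triangles in the padded region are the split triangles $0\>0\>0\dashrightarrow$ and $0\>X_n\xrightarrow{=}X_n\dashrightarrow$ (both in $\Delta_0\subseteq\xi$), while the rest are the original $\xi$-resolution $\Mbe$-triangles of the finite resolution. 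Since $0\in\Mcx$, all terms lie in $\Mcx$, so this is an $\Mcx$-resolution of $M$, giving $M\in\res{\Mcx}$. Applying Proposition \ref{ZGQ} we obtain $M\in\res{\Px}$, as required.

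The main (and only) obstacle is the bookkeeping verification that the padded complex is genuinely $\xi$-exact; this is handled by invoking $\Delta_0\subseteq\xi$ and the explicit split $\Mbe$-triangles above, so no hard extriangulated-category machinery is needed beyond what has already been set up.
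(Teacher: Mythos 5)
Your proof is correct and follows the route the paper clearly intends (it gives no explicit proof, treating the corollary as immediate from Proposition~\ref{ZGQ}): finite $\Mcx$-resolution dimension gives $M\in\res{\Mcx}$, and then $\res{\Mcx}=\res{\Px}$ finishes. Your explicit check that $0\in\Mcx$ (via closure under $\xi$-cocones applied to the split $\Mbe$-triangle $0\>X\>X\dashrightarrow\in\Delta_0\subseteq\xi$) is a worthwhile piece of bookkeeping that the paper leaves tacit, and it is exactly what legitimizes padding the bounded resolution by zeros.
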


\begin{lem}\label{lem3.5}
Let $\Mcx$ be a quasi-resolving subcategory of $\Mcc$, and let 
\[
0\>N\>X_1\>X_0\>M\>0
\]
is a $\xi$-exact complex with $X_0, X_1\in\Mcx$. Then there is a $\xi$-exact complex 
\[
0\>N\>X\>P\>M\>0
\]
with $X\in\Mcx$ and $P\in\Px$.	
\end{lem}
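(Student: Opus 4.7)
The plan is to decompose the given $\xi$-exact sequence into two $\mathbb{E}$-triangles, swap out $X_0$ for a $\xi$-projective object using the quasi-resolving hypothesis, and then splice things back together by two applications of the extriangulated axioms.

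\textbf{Step 1: Decomposition.} By definition of $\xi$-exactness, the complex $0\to N\to X_1\to X_0\to M\to 0$ breaks into two $\mathbb{E}$-triangles in $\xi$, namely $N\to X_1\to K\dashrightarrow$ and $K\to X_0\to M\dashrightarrow$, where $K$ is the middle syzygy.

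\textbf{Step 2: Replacing $X_0$.} Since $\Mcx$ is quasi-resolving we have $\Mcx\subseteq\res{\Px}$, so we can take the first step of a $\Px$-resolution of $X_0$: an $\mathbb{E}$-triangle $L\to P\to X_0\dashrightarrow$ in $\xi$ with $P\in\Px$ and $L\in\Mcx$ (the argument in Proposition \ref{ZGQ} already exhibits such a triangle).

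\textbf{Step 3: Combine the two triangles ending at $X_0$.} By Lemma \ref{Thm3.2} we may carry out all constructions inside the extriangulated category $(\Mcc,\Mbe_\xi,\mathfrak{s}_\xi)$, so every triangle we produce automatically lies in $\xi$. Apply $\rm(ET4)^{op}$ to the two deflations $P\to X_0$ and $X_0\to M$: this yields an object $Y$ together with $\mathbb{E}$-triangles $Y\to P\to M\dashrightarrow$ and $L\to Y\to K\dashrightarrow$ in $\xi$, where $Y$ is the cocone of the composition $P\to X_0\to M$.

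\textbf{Step 4: Combine with the original triangle at $K$.} Now apply Lemma \ref{BH}(1) to the pair of $\mathbb{E}$-triangles $L\to Y\to K\dashrightarrow$ and $N\to X_1\to K\dashrightarrow$, both ending at $K$. This produces an object $X$ fitting into $\mathbb{E}$-triangles $N\to X\to Y\dashrightarrow$ and $L\to X\to X_1\dashrightarrow$ in $\xi$. Since $L, X_1\in\Mcx$ and $\Mcx$ is closed under $\xi$-extensions, the second triangle forces $X\in\Mcx$. Splicing $N\to X\to Y\dashrightarrow$ with $Y\to P\to M\dashrightarrow$ gives the desired $\xi$-exact complex $0\to N\to X\to P\to M\to 0$ with $X\in\Mcx$ and $P\in\Px$.

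The main obstacle is identifying the correct axiom for Step 3: one must recognize that $\rm(ET4)^{op}$ (composition of deflations) is exactly what is needed to replace $X_0$ by the $\xi$-projective $P$ while producing a new intermediate syzygy $Y$ that still connects sensibly with the old syzygy $K$. Once $Y$ is in hand, the rest is a straightforward pullback via Lemma \ref{BH}(1), and membership in $\xi$ is guaranteed throughout by Lemma \ref{Thm3.2}.
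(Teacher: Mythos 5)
Your proof is correct and matches the paper's argument step for step: decompose into two $\mathbb{E}$-triangles, replace $X_0$ via $L\to P\to X_0\dashrightarrow$ from the quasi-resolving hypothesis, apply $\rm(ET4)^{op}$ to the composed deflation $P\to X_0\to M$ to produce $Y$, and then apply Lemma \ref{BH}(1) to the two triangles ending at $K$ to produce $X$, with $X\in\Mcx$ following from closure under $\xi$-extensions. The only cosmetic difference is that you spell out explicitly why $X\in\Mcx$, which the paper leaves implicit.
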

\begin{proof}
By the $\xi$-exact complex $N\>X_1\>X_0\>M$, one can get the following $\Mbe$-triangles
\[
N\>X_1\>K\dashrightarrow \text{\ and \ } K\>X_0\>M\dashrightarrow
\] 
 in $\xi$. Since $\Mcx$ is quasi-resolving, there is an $\Mbe$-triangle $L\>P\>X_0\dashrightarrow$ with $P\in\Px$ and $L\in\Mcx$. By Lemma \ref{Thm3.2} and $\rm(ET4)^{op}$, we have the following commutative diagram 
\[
		\xymatrix{
			L\ar@{=}[d] \ar[r] & Y \ar[r]\ar[d] & K \ar[d] \ar@{-->}[r] &  \\
			L\ar[r] & P\ar[d] \ar[r] & X_0\ar[d] \ar@{-->}[r]& \\
			& M\ar@{=}[r]\ar@{-->}[d]&M\ar@{-->}[d] \\
			& & &  }
\] 
where all  rows and columns are  $\mathbb{E}$-triangles in $\xi$. Moreover, it follows from Lemma \ref{Thm3.2} and  Lemma \ref{BH} (1) that we have the following commutative diagram
\[
	\xymatrix{
		&N\ar@{=}[r]\ar[d]&N\ar[d]\\
		L\ar[r]\ar@{=}[d]&X\ar[r]\ar[d]&X_1\ar[d]\ar@{-->}[r]&\\
		L\ar[r]&Y\ar[r]\ar@{-->}[d]&K\ar@{-->}[r]\ar@{-->}[d]&\\
		&&&
	}
\]	
made of $\mathbb{E}$-triangles in $\xi$. Connecting the middle columns in the above diagrams, we obtain the $\xi$-exact complex $N\>X\>P\>M$ with $X\in\Mcx$ and $P\in\Px$.
\end{proof}

\begin{thm}\label{thm3.6}
Let $\Mcx$ be a quasi-resolving subcategory and $n$ be a nonnegative integer. Assume that 
\[
0\>N\>X_{n-1}\>X_{n-2}\>\cdots\>X_1\>X_0\>M\>0\tag{$\spadesuit$}\label{HT}
\]
is a $\xi$-exact complex with $X_i\in\Mcx$ for all $0\leq i\leq n-1$. Then the following statemants hold.

(1) There exists a $\xi$-exact complex
\[
	0\>N\>X\>P_{n-2}\>\cdots\>P_1\>P_0\>M\>0
\]
with $X\in\Mcx$ and $P_i\in\Px$ for all $0\leq i\leq n-2$.

(2) There exists a $\xi$-exact complex
\[
	0\>L\>P_{n-1}\>P_{n-2}\>\cdots\>P_1\>P_0\>M\>0
\]
and an $\Mbe$-triangle $X\>L\>N\dashrightarrow$ in $\xi$ with $X\in\Mcx$ and $P_i\in\Px$ for all $0\leq i\leq n-1$.

(3) If $M\in\Mcx$, then there exists a $\xi$-exact complex
\[
	0\>N\>X\>P_{n-2}\>\cdots\>P_2\>P_1\>P_0\>0
\]
with $X\in\Mcx$ and $P_i\in\Px$ for all $0\leq i\leq n-2$.

(4) If $M\in\Mcx$, then there exists a $\xi$-exact complex
\[
	0\>T\>P_n\>P_{n-1}\>\cdots\>P_2\>P_1\>P_0\>0
\]
and an $\Mbe$-triangle $X\>T\>N\dashrightarrow$ in $\xi$ with $X\in\Mcx$ and $P_i\in\Px$ for all $0\leq i\leq n$.

(5) Let $\Mch$ be a $\xi$-cogenerator of $\Mcc$, then for any integer $m$ with $0\leq m\leq n-1$, there exists a $\xi$-exact complex 
\[
	0\>N\>H_{n-1}\>\cdots\>H_{m+1}\>X\>P_{m-1}\>\cdots\>P_0\>M\>0
\]
with $X\in\Mcx$, $H_i\in\Mch$ for any $m+1\leq i\leq n-1$ and $P_j\in\Px$ for any  $0\leq j\leq m-1$.

(6) Let $\Mch$ be a $\xi$-cogenerator of $\Mcc$ and $N\in\Mcx$, then for any integer $m$ with $0\leq m\leq n-1$, there exists a $\xi$-exact complex 
\[
	0\>H_n\>H_{n-1}\>\cdots\>H_{m+1}\>X\>P_{m-1}\>\cdots\>P_0\>M\>0
\]
with $X\in\Mcx$, $H_i\in\Mch$ for any $m+1\leq i\leq n$ and $P_j\in\Px$ for any  $0\leq j\leq m-1$.
\end{thm}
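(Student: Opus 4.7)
The plan is to prove the six parts in order, each building on earlier parts via induction on $n$ (or on a secondary index), splicing of $\mathbb{E}$-triangles, and the $3\times 3$-style diagrams provided by Lemma \ref{BH} and axiom $\mathrm{(ET4)^{op}}$.

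\textbf{Parts (1) and (2).} For (1), I would induct on $n$. The case $n\le 1$ is trivial and $n=2$ is exactly Lemma \ref{lem3.5}. For $n\ge 3$, apply Lemma \ref{lem3.5} to the last four terms $0\to K_2\to X_1\to X_0\to M\to 0$ (with $K_2$ the relevant syzygy) to replace $(X_1,X_0)$ by $(X'_1,P_0)$ where $X'_1\in\mathcal{X}$ and $P_0\in\mathcal{P}_\mathcal{X}(\xi)$. Truncate the resulting spliced complex just before $P_0$, producing a $\xi$-exact complex of length $n$ to which the inductive hypothesis applies (with $K''_1:=\ker(P_0\to M)$ as the new target), and then splice $K''_1\to P_0\to M\dashrightarrow$ back on the right to conclude. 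For (2), take the (1)-output and use the quasi-resolving property of $\mathcal{X}$ to choose $X'\to P_{n-1}\to X\dashrightarrow$ in $\xi$ with $P_{n-1}\in\mathcal{P}_\mathcal{X}(\xi)$ and $X'\in\mathcal{X}$ (the latter by closure under $\xi$-cocones). Base-change this $\mathbb{E}$-triangle along the inflation $N\to X$ (using that $\xi$ is closed under base change) and run the $3\times 3$ construction from Lemma \ref{Thm3.2} and $\mathrm{(ET4)^{op}}$, as in the proof of Lemma \ref{lem3.5}, to obtain simultaneously the requested $\mathbb{E}$-triangle $X'\to L\to N\dashrightarrow$ and a new top $\mathbb{E}$-triangle $L\to P_{n-1}\to K_n\dashrightarrow$ which splices onto the remaining portion $K_n\to P_{n-2}\to\cdots\to P_0\to M\to 0$ of the (1)-output.

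\textbf{Parts (3)--(6).} The additional hypothesis $M\in\mathcal{X}$ in (3) and (4) combines with $X_0\in\mathcal{X}$ and closure of $\mathcal{X}$ under $\xi$-cocones to give $K_1:=\ker(X_0\to M)\in\mathcal{X}$; truncate the original complex at $K_1$ and apply (1) or (2) to this all-$\mathcal{X}$ complex of length $n$. The resulting trailing $\to P'_0\to K_1\to 0$ is then converted to $\to P_0\to 0$ by choosing $K'_1\to P_0\to K_1\dashrightarrow$ with $P_0\in\mathcal{P}_\mathcal{X}(\xi)$ and $K'_1\in\mathcal{X}$ and performing a $3\times 3$ merge via Lemma \ref{BH}(1) to absorb $K_1$ into the new $P_0$; in (4), the extra $\mathbb{E}$-triangle $X\to T\to N\dashrightarrow$ is carried through unchanged from the (2)-application. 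For (5) and (6), proceed by a secondary induction on $n-1-m$. The base case $m=n-1$ of (5) is just (1); for the base case of (6), additionally cogenerate $N\in\mathcal{X}$ via $N\to H_n\to N'\dashrightarrow$ and combine with the top $\mathbb{E}$-triangle $N\to X\to K_n\dashrightarrow$ of the (1)-output through Lemma \ref{BH}(2), producing an object $M_0\in\mathcal{X}$ (by closure of $\mathcal{X}$ under $\xi$-extensions) that plays the role of the new $\mathcal{X}$-term at the top. In the inductive step (for both (5) and (6)), cogenerate the current $\mathcal{X}$-object sitting at position $m+2$ and reassemble via $\mathrm{(ET4)}$ together with the $\mathbb{E}$-triangle at that position to shift the $\mathcal{X}$-position one step lower, inserting the new $\mathcal{H}$-term at the vacated slot.

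\textbf{Main obstacle.} The principal technical challenge is the bookkeeping of $\mathbb{E}$-triangles, $\xi$-exactness, and closure properties across the many $3\times 3$ diagrams involved in the splicing and base/cobase change constructions. For parts (5) and (6) in particular, verifying that each newly produced $\mathcal{X}$-object at a reduced position actually lies in $\mathcal{X}$ hinges on a delicate interplay between the cogenerator property of $\mathcal{H}$, the closure of $\mathcal{X}$ under $\xi$-extensions and $\xi$-cocones, and the precise shape of the extriangle produced by $\mathrm{(ET4)}$; this verification will require careful index tracking at every step of the induction.
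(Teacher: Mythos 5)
Your overall strategy is the same one the paper uses: induction on $n$, splicing of $\xi$-resolution $\mathbb{E}$-triangles, and $3\times 3$ constructions via Lemma~\ref{BH} and $\rm(ET4)^{(\mathrm{op})}$. For~(1) you induct from the bottom of the complex (clean up $X_1,X_0$ first, then apply the hypothesis) while the paper inducts from the top (peel off $X_{n-1}$, apply the hypothesis, then clean up), but the two are interchangeable. Part~(2) matches the paper essentially verbatim: cogenerate the $\mathcal{X}$-term from~(1) by $\mathcal{P}_{\mathcal{X}}(\xi)$ and base-change along the inflation out of $N$. For~(5)--(6) you run a secondary induction on $n-1-m$, sliding the $\mathcal{X}$-term one slot at a time via the dual of Lemma~\ref{lem3.5}, whereas the paper splits at the $m$-th syzygy $K_m$, applies~(1) and its (unstated) dual to the two halves, and then performs a single merge at the junction. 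Both are correct; your stepwise version is arguably more transparent, at the cost of iterating the diagram chase $n-1-m$ times.

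The one step I would flag is the ``conversion'' you describe in~(3). After truncating at $K_1\in\mathcal{X}$, you say you apply~(1) to an ``all-$\mathcal{X}$ complex of length $n$''. If you view $K_1$ as the final $\mathcal{X}$-term with target $0$, then~(1) already delivers $0\to N\to X\to P_{n-2}\to\cdots\to P_0\to 0$, and no further work is needed. But you then describe a trailing $\cdots\to P'_0\to K_1\to 0$, which would only occur if you had applied~(1) with target $K_1$ and only $n-1$ $\mathcal{X}$-terms, and the proposed fix --- form $K'_1\to P_0\to K_1\dashrightarrow$ and invoke Lemma~\ref{BH}(1) to ``absorb'' $K_1$ into $P_0$ --- does not produce the shape you need. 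Lemma~\ref{BH}(1) applied to $K''_1\to P'_0\to K_1\dashrightarrow$ and $K'_1\to P_0\to K_1\dashrightarrow$ yields a new \emph{middle} object $W$ with $\mathbb{E}$-triangles $K''_1\to W\to P_0\dashrightarrow$ and $K'_1\to W\to P'_0\dashrightarrow$; it does not convert the tail $\to P'_0\to K_1\to 0$ into $\to P'_0\to P_0\to 0$. So either drop the conversion step entirely (and view the truncated complex as an $\mathcal{X}$-resolution of $0$, as the paper's inductive step implicitly does), or argue as in the paper by inducting on $n$ within~(3) itself. Similarly in~(4), applying~(2) to the \emph{truncated} complex yields $n$ objects in $\mathcal{P}_{\mathcal{X}}(\xi)$, one short of the stated $n+1$; to match the statement you should instead feed the original complex with $M$ counted as the last $\mathcal{X}$-term (target $0$) into~(2).

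Your base case for~(6) at $m=n-1$ (cogenerate $N\to H_n\to N'\dashrightarrow$ and glue with the top resolution $\mathbb{E}$-triangle via Lemma~\ref{BH}(2), then use closure of $\mathcal{X}$ under $\xi$-extensions to place the extension inside $\mathcal{X}$) is correct and is effectively the $n=1$ case of the unstated ``dual of~(1)'' that the paper invokes without proof; your secondary induction supplies that dual inline, which is a genuine bonus in terms of rigor.
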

\begin{proof}
(1)  We will prove the result by induction on $n$. When $n=0$, the result is true obviously. When $n=1$, the result just follows from Lemma \ref{lem3.5}. Now we assume that $n\geq 2$. Since (\ref{HT}) is a $\xi$-exact complex, there exist an $\Mbe$-triangle $N\>X_{n-1}\>K\dashrightarrow$ in $\xi$ and a $\xi$-exact complex
\[
	0\>K\>X_{n-2}\>X_{n-3}\>\cdots\>X_1\>X_0\>M\>0
\]
in $\Mcc$. By induction hypothesis, there is a $\xi$-exact complex 
\[
	0\>K\>X'\>P_{n-3}\>\cdots\>P_1\>P_0\>M\>0
\]
with $X'\in\Mcx$ and $P_i\in\Px$ for any $0\leq i\leq n-3$. Then we obtain a new $\xi$-exact complex
\[
	0\>N\>X_{n-1}\>X'\>P_{n-3}\>\cdots\>P_1\>P_0\>M\>0
\]
in $\Mcc$. Then there are two $\xi$-exact complex 
\[
	0\>N\>X_{n-1}\>X'\>L\>0
\]
and 
\[
	0\>L\>P_{n-3}\>\cdots\>P_1\>P_0\>M\>0
\]
in $\Mcc$. By  Lemma \ref{lem3.5}, we have the following $\xi$-exact complex
\[
	0\>N\>X\>P_{n-2}\>L\>0
\]
with $X\in\Mcx$ and $P_{n-2}\in\Px$. Thus, we can get a $\xi$-exact complex
\[
	0\>N\>X\>P_{n-2}\>\cdots\>P_1\>P_0\>M\>0
\]
with $X\in\Mcx$ and $P_i\in\Px$ for  $0\leq i\leq n-2$, as desired.

(2) By (1) and (\ref{HT}), there exists a $\xi$-exact complex
\[
	0\>N\>X'\>P_{n-2}\>\cdots\>P_1\>P_0\>M\>0
\]
with $X'\in\Mcx$ and $P_i\in\Px$ for  $0\leq i\leq n-2$. Then we have a $\xi$-exact complex
\[
	0\>K\>P_{n-2}\>\cdots\>P_1\>P_0\>M\>0
\]
and an $\Mbe$-triangle $N\>X'\>K\dashrightarrow$ in $\xi$. Note that $X'\in\Mcx$, there exists an $\Mbe$-triangle $X\>P_{n-1}\>X'\dashrightarrow$ in $\xi$ with $X\in\Mcx$ and $P_{n-1}\in\Px$. By Lemma \ref{Thm3.2} and $\rm(ET4)^{op}$, we have the following commutative diagram 
\[
		\xymatrix{
			X\ar@{=}[d] \ar[r] & L \ar[r]\ar[d] & N \ar[d] \ar@{-->}[r] &  \\
			X\ar[r] & P_{n-1}\ar[d] \ar[r] & X'\ar[d] \ar@{-->}[r]& \\
			& K\ar@{=}[r]\ar@{-->}[d]&K\ar@{-->}[d] \\
			& & &  }
\] 
where all  rows and columns are  $\mathbb{E}$-triangles in $\xi$. Thus, we obtain a $\xi$-exact complex
\[
	0\>L\>P_{n-1}\>P_{n-2}\>\cdots\>P_1\>P_0\>M\>0
\]
and an $\Mbe$-triangle $X\>L\>N\dashrightarrow$ in $\xi$ with $X\in\Mcx$ and $P_i\in\Px$ for all $0\leq i\leq n-1$.

(3) We will prove the result by induction on $n$. When $n=0$, since $M\in\Mcx$, there is a $\Mbe$-triangle $X'\>P\>M$ in $\xi$ with $X'\in\Mcx$ and $P\in\Px$. By  Lemma \ref{BH} (1) and Lemma \ref{Thm3.2}, we have the following commutative diagram 
\[
	\xymatrix{
		&N\ar@{=}[r]\ar[d]&N\ar[d]\\
		X'\ar[r]\ar@{=}[d]&X\ar[r]\ar[d]&X_0\ar[d]\ar@{-->}[r]&\\
		X'\ar[r]&P\ar[r]\ar@{-->}[d]&M\ar@{-->}[r]\ar@{-->}[d]&\\
		&&&
	}
\]	
made of $\mathbb{E}$-triangles in $\xi$. Since $\Mcx$ is closed under $\xi$-extensions, $X$  lies in $\Mcx$. And the $\Mbe$-triangle $N\>X\>P\dashrightarrow$ is what we need. Now we assume $n\geq 1$. By (\ref{HT}), there exist an $\Mbe$-triangle $N\>X_{n-1}\>K\dashrightarrow$ in $\xi$ and a $\xi$-exact complex
\[
	0\>K\>X_{n-2}\>X_{n-3}\>\cdots\>X_1\>X_0\>M\>0
\]
in $\Mcc$. By induction hypothesis, there is a $\xi$-exact complex 
\[
	0\>K\>X'\>P_{n-3}\>\cdots\>P_2\>P_1\>P_0\>0
\]
with $X'\in\Mcx$ and $P_i\in\Px$ for any $0\leq i\leq n-3$. So we can obtain a new $\xi$-exact complex 
\[
	0\>N\>X_{n-1}\>X'\>P_{n-3}\>\cdots\>P_2\>P_1\>P_0\>0
\]
in $\Mcc$. Then there are two $\xi$-exact complex 
\[
	0\>L\>P_{n-3}\>\cdots\>P_1\>P_0 \text{\ and\ } N\>X_{n-1}\>X'\>L\>0
\]
in $\Mcc$. Thus, there is  a $\xi$-exact complex 
\[
	0\>N\>X\>P_{n-2}\>L\>0
\]
with $X\in\Mcx$ and $P_{n-2}\in\Px$ by (1). So there exists  a $\xi$-exact complex 
\[
	0\>N\>X\>P_{n-2}\>\cdots\>P_2\>P_1\>P_0\>0
\]
with $X\in\Mcx$ and $P_i\in\Px$ for all $0\leq i\leq n-2$ in $\Mcc$.

(4) The proof is similar to that of (2).

(5) From (\ref{HT}), there exist two $\xi$-exact complex
\[
	0\>N\>X_{n-1}\>X_{n-2}\>\cdots\>X_{m+1}\>K_m\>0
\]
and 
\[
	0\>K_m\>X_m\>X_{m-1}\>\cdots\>X_0\>M\>0
\]
in $\Mcc$. By (1) and its dual, there are two $\xi$-exact complex
\[
	0\>N\>H_{n-1}\>H_{n-2}\>\cdots\>H_{m+2}\>X'\>K_m\>0
\]
and 
\[
	0\>K_m\>X''\>P_{m-1}\>\cdots\>P_1\>P_0\>M\>0
\]
with $X',X''\in\Mcx$, $H_i\in \Mch$ for $m+2\leq i\leq n-1$ and $P_j\in\Px$ for $0\leq j\leq m-1$. Then we can get the following $\xi$-exact complex 
\[
	0\>N\>H_{n-1}\>\cdots\>H_{m+2}\>X'\>X''\>P_{m-1}\>\cdots\>P_0\>M\>0
\]
in $\Mcc$. So there exist three $\xi$-exact complexes in $\Mcc$ as follows
\begin{gather*}
	0\>N\>H_{n-1}\>\cdots\>H_{m+2}\>L'\>0\\
	0\>L'\>X'\>X''\>L''\>0\\
	0\>L''\>P_{m-1}\>\cdots\>P_0\>M\>0. 
\end{gather*}
It follows from the dual of (1), one can obtain a $\xi$-exact complex
\[
	0\>L'\>H_{m+1}\>X\>L''\>0
\]
with $H_{m+1}\in\Mch$ and $X\in\Mcx$. Consider the related $\xi$-exact complexes, we have the following $\xi$-exact complex
\[
	0\>N\>H_{n-1}\>\cdots\>H_{m+1}\>X\>P_{m-1}\>\cdots\>P_0\>M\>0,
\]
as desired.

(6) The proof is similar to that of (5).
\end{proof}

\section{Quasi-resolving resolution dimensions}
\quad~ In this section, we investigate the properties and consequences of $\Mcx$-resolution dimension for a quasi-resolving subcategory $\Mcx$ in an extriangulated category $\Mcc$.  Moreover, we provide several equivalent characterizations for objects with finite $\Mcx$-resolution dimension.

\begin{lem}\label{THC}
Let $\Mcx$ be a quasi-resolving subcategory of $\Mcc$. For any object $M\in \res{\Mcx}$, if there are two $\xi$-exact complexes
\[
	0\>X_n\>X_{n-1}\>\cdots \> X_1\>X_0\>M\>0
\]
and 
\[
	0\>Y_n\>Y_{n-1}\>\cdots \> Y_1\>Y_0\>M\>0
\]
with $X_i$ and $Y_i$ in $\Mcx$ for all $0\leq i\leq n-1$, then $X_n\in\Mcx$ if and only if $Y_n\in\Mcx$.
\end{lem}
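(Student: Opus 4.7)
The plan is to handle the case $n=1$ directly and reduce $n \geq 2$ to it via a Schanuel-type construction combined with Theorem~\ref{thm3.6}(1). The case $n=0$ is vacuous since both complexes force $X_0 \cong M \cong Y_0$.

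For $n=1$, I would apply Lemma~\ref{BH}(1) to the $\Mbe$-triangles $X_1 \to X_0 \to M \dashrightarrow$ and $Y_1 \to Y_0 \to M \dashrightarrow$ in $\xi$, producing an object $N$ together with $\xi$-triangles $X_1 \to N \to Y_0 \dashrightarrow$ and $Y_1 \to N \to X_0 \dashrightarrow$ in $\xi$. Using $X_0, Y_0 \in \Mcx$ and the closure of $\Mcx$ under $\xi$-extensions and $\xi$-cocones, one then concludes $X_1 \in \Mcx \Leftrightarrow N \in \Mcx \Leftrightarrow Y_1 \in \Mcx$.

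For $n \geq 2$, I would first apply Theorem~\ref{thm3.6}(1) to each resolution, producing new $\xi$-exact complexes with the same top terms and with $\xi$-projective intermediate terms:
\[
0 \to X_n \to X' \to P_{n-2} \to \cdots \to P_0 \to M \to 0, \qquad 0 \to Y_n \to Y' \to P'_{n-2} \to \cdots \to P'_0 \to M \to 0,
\]
where $X', Y' \in \Mcx$ and $P_i, P'_i \in \Px$. The syzygy triangles at position $n-1$ of each take the form $X_n \to X' \to K \dashrightarrow$ and $Y_n \to Y' \to K' \dashrightarrow$ in $\xi$, where $K$ and $K'$ are the tops of two $\xi$-projective resolutions of $M$ of length $n-1$.

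The core step is to produce a Schanuel-type isomorphism $K \oplus Q \cong K' \oplus Q'$ with $Q, Q' \in \Px$. This follows by iterated use of Lemma~\ref{BH}(1) on the two $\xi$-projective resolutions of $M$, exploiting at each stage that any $\xi$-triangle $A \to B \to P \dashrightarrow$ with $P \in \Px$ splits (the identity on $P$ lifts through the $\xi$-deflation $B \to P$ by $\xi$-projectivity). Given this isomorphism, I identify $K \oplus Q$ with $K' \oplus Q'$ as a common object $Z$ and form direct sums of the two triangles with the split triangles $0 \to Q \to Q \dashrightarrow$ and $0 \to Q' \to Q' \dashrightarrow$, obtaining
\[
X_n \to X' \oplus Q \to Z \dashrightarrow \quad \text{and} \quad Y_n \to Y' \oplus Q' \to Z \dashrightarrow
\]
in $\xi$. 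Their middle terms lie in $\Mcx$ (direct sums of $\Mcx$-objects are in $\Mcx$ via the split $\xi$-triangle from $\Delta_0 \subseteq \xi$ together with extension closure, and $\Px \subseteq \Mcx$), and $Z \in \res{\Mcx}$ by Proposition~\ref{ZGQ} since $K$ is a syzygy in a $\xi$-projective resolution of $M$. Applying the $n=1$ case to the object $Z$ with these two triangles then yields $X_n \in \Mcx \Leftrightarrow Y_n \in \Mcx$. The main obstacle is the Schanuel-type isomorphism itself, which is routine but requires careful bookkeeping of the iterated pullbacks and of the splittings afforded by $\xi$-projectivity.
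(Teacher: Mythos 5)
Your proof is correct in substance, but it takes a genuinely different route from the paper, and one of your auxiliary claims is not justified as stated (though it also turns out to be unnecessary).

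The paper does not reduce to $n=1$. Instead, it picks a \emph{full} $\xi$-projective resolution of $M$ (available since $M\in\res{\Mcx}=\res{\Px}$ by Proposition~\ref{ZGQ}), lets $K_n$ be its $n$th syzygy, and then invokes \cite[Lemma~4]{JDPPR} repeatedly to merge each given resolution with the projective one, producing $\xi$-exact complexes
$0\to K_n\to X_n\oplus P_{n-1}\to\cdots\to X_1\oplus P_0\to X_0\to 0$
and the analogous one with the $Y_i$. Descending through these by closure under $\xi$-cocones shows $X_n\oplus P_{n-1}\in\Mcx\Leftrightarrow K_n\in\Mcx\Leftrightarrow Y_n\oplus P_{n-1}\in\Mcx$, and stripping the projective summand (via the split triangle in $\Delta_0\subseteq\xi$) finishes. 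Your route --- first normalize both resolutions via Theorem~\ref{thm3.6}(1), then apply an explicit iterated Schanuel argument to the two resulting partial projective resolutions, then feed the stabilized triangles $X_n\to X'\oplus Q\to Z\dashrightarrow$ and $Y_n\to Y'\oplus Q'\to Z\dashrightarrow$ into the $n=1$ case --- is a valid alternative. What your approach buys is transparency: the $n=1$ case is a clean one-step application of Lemma~\ref{BH}(1), and the Schanuel step is re-derived from first principles rather than cited. What the paper's approach buys is brevity, since it outsources the comparison to \cite[Lemma~4]{JDPPR} and compares both resolutions to a single canonical object $K_n$ at once.

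One inaccuracy: you assert $Z\in\res{\Mcx}$ ``by Proposition~\ref{ZGQ} since $K$ is a syzygy in a $\xi$-projective resolution of $M$.'' This does not follow. Your $K$ is a syzygy of a \emph{finite} (length $n-1$) projective resolution of $M$, and Proposition~\ref{ZGQ} only tells you $M\in\res{\Px}$, not that $K$ or $K\oplus Q$ is. To get this you would need to compare instead to $\Omega^{n-1}_{\Px}(M)$ taken from an \emph{infinite} projective resolution (guaranteed by $M\in\res{\Px}$) and balance the extra projective summands, or else know that $\res{\Px}$ is closed under projective summands. However, this whole issue is moot: your own proof of the $n=1$ case never uses the hypothesis that the bottom object lies in $\res{\Mcx}$ --- it is a pure Lemma~\ref{BH}(1) pullback plus the closure axioms for $\Mcx$. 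So you can apply that argument directly to $Z$ without verifying $Z\in\res{\Mcx}$ at all, and the side remark can simply be deleted. (Incidentally, this shows that the hypothesis $M\in\res{\Mcx}$ in the statement is not actually used in your argument; the paper's proof does use it, to produce $K_n$.)
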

\begin{proof}
Since $M\in \res{\Mcx}$, $M$ is in $M\in \res{\Px}$ by Proposition \ref{ZGQ}. Then we have the following $\xi$-exact complex
\[
	0\>K_n\>P_{n-1}\>\cdots\> P_1\>P_0\>M\>0
\]
with $P_i\in\Px$ for all $0\leq i\leq n-1$. Consider the $\xi$-exact complex 
\[
	0\>X_n\>X_{n-1}\>\cdots \> X_1\>K_1\>0
\] 
in $\Mcc$ and the $\Mbe$-triangle 
\[
K_1\>X_0\>M\dashrightarrow
\] 
in $\xi$. By repeatedly applying \cite[Lemma 4]{JDPPR}, we can obtain the following $\xi$-exact complex 
\[
	0\>K_n\>X_n\oplus P_{n-1}\>X_{n-1}\oplus P_{n-2}\>\cdots \> X_1\oplus P_0\>X_0\>0.
\]
Similarly, we have the following $\xi$-exact complex 
\[
	0\>K_n\>Y_n\oplus P_{n-1}\>Y_{n-1}\oplus P_{n-2}\>\cdots \> Y_1\oplus P_0\>Y_0\>0.
\]
Since $\Mcx$ is quasi-resolving, then there are two $\Mbe$-triangles
\[
K_n\>X_n\oplus P_{n-1}\>X\dashrightarrow \text{\ and \ } K_n\>Y_n\oplus P_{n-1}\>Y\dashrightarrow
\]
in $\xi$ with $X,Y\in\Mcx$. Thus, we have $X_n\oplus P_{n-1}\in\Mcx$ if and only if $K_n\in\Mcx$ if and only if $Y_n\oplus P_{n-1}\in\Mcx$. So $X_n\in\Mcx$ if and only if $Y_n\in\Mcx$.
\end{proof}

\begin{prop}\label{Prop4.2}
Let $\Mcx$ be a quasi-resolving subcategory of $\Mcc$ and $M\in\Mcx$, then the following are equivalent for any  $n\geq 0$.

(1) $\resdim{\Mcx}{M}\leq n$.

(2) $\Omega^{n+i}_{\Px}(M)\in \Mcx$, $\forall i\geq 0$.

(3) $\Omega^{n+i}_{\Mcx}(M)\in \Mcx$, $\forall i\geq 0$.
\end{prop}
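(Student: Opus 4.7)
The plan is to establish the cyclic implications $(1)\Rightarrow(3)\Rightarrow(2)\Rightarrow(1)$, with Proposition \ref{ZGQ} and Lemma \ref{THC} providing the principal technical inputs. Since $M\in\Mcx\subseteq\res{\Mcx}=\res{\Px}$ by Proposition \ref{ZGQ}, $M$ admits both an $\Mcx$-resolution and a $\Px$-resolution, so each of the three syzygy expressions appearing in the statement refers to a genuine object.

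The implication $(3)\Rightarrow(2)$ is nearly immediate: because $\Px=\Mcx\cap\P\subseteq\Mcx$, every $\Px$-resolution of $M$ is in particular an $\Mcx$-resolution, so each $\Omega^{n+i}_{\Px}(M)$ is a legitimate choice of $\Omega^{n+i}_{\Mcx}(M)$ and therefore lies in $\Mcx$ by hypothesis. For $(2)\Rightarrow(1)$, I fix a $\Px$-resolution $\cdots\> P_1\> P_0\> M\>0$ of $M$ and use hypothesis (2) at $i=0$ to conclude that $\Omega^n_{\Px}(M)\in\Mcx$; the truncation
\[
0\>\Omega^n_{\Px}(M)\> P_{n-1}\>\cdots\> P_0\> M\>0
\]
is then an $\Mcx$-resolution of length at most $n$, so $\resdim{X}{M}\leq n$.

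The substantive step is $(1)\Rightarrow(3)$. Fix an $\Mcx$-resolution of $M$ of length at most $n$,
\[
0\> X_n\> X_{n-1}\>\cdots\> X_0\> M\>0,
\]
and let $\cdots\> Y_1\> Y_0\> M\>0$ be an arbitrary $\Mcx$-resolution of $M$ with syzygies $K_i=\Omega^i_{\Mcx}(M)$. Truncating the second resolution at level $n$ yields the $\xi$-exact complex $0\> K_n\> Y_{n-1}\>\cdots\> Y_0\> M\>0$, and both truncated complexes satisfy the hypotheses of Lemma \ref{THC}, so $X_n\in\Mcx$ forces $K_n\in\Mcx$, which is the case $i=0$. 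For $i\geq 1$, the $\xi$-resolution $\Mbe$-triangle $K_{n+i}\> Y_{n+i-1}\> K_{n+i-1}\dashrightarrow$ in $\xi$ has $Y_{n+i-1}\in\Mcx$ and, by induction on $i$, $K_{n+i-1}\in\Mcx$; closure of $\Mcx$ under $\xi$-cocones then yields $K_{n+i}\in\Mcx$.

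The principal obstacle is precisely the base case $i=0$ of $(1)\Rightarrow(3)$, since it requires transferring $\Mcx$-membership of the top term from one $\Mcx$-resolution of $M$ to another; this is exactly the role Lemma \ref{THC} plays. Once that syzygy-invariance is in hand, the remaining cases $i\geq 1$ collapse into a short induction relying only on the $\xi$-cocone closure axiom, and the two easy implications $(3)\Rightarrow(2)$ and $(2)\Rightarrow(1)$ close the cycle.
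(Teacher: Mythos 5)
Your proof is correct and follows essentially the same route as the paper, which proves the proposition by a single appeal to Lemma~\ref{THC}; you simply spell out the details that the paper leaves implicit. The cycle $(1)\Rightarrow(3)\Rightarrow(2)\Rightarrow(1)$ is a clean way to organize the argument: Lemma~\ref{THC} does the work at level $n$ in $(1)\Rightarrow(3)$, the $\xi$-cocone axiom handles higher syzygies, and the remaining implications are formal truncation/inclusion observations. One small point worth being explicit about is that Lemma~\ref{THC} also guarantees the well-posedness of conditions (2) and (3) — that is, the membership of $\Omega^{n+i}_{\Px}(M)$ or $\Omega^{n+i}_{\Mcx}(M)$ in $\Mcx$ is independent of the chosen resolution — which your proof implicitly relies on in $(3)\Rightarrow(2)$ when you identify a $\Px$-syzygy as an instance of an $\Mcx$-syzygy.
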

\begin{proof}
	It  follows directly from Lemma \ref{THC}.
\end{proof}

Now we can compare $\Mcx$-resolution dimensions of an $\Mbe$-triangle in $\xi$ as follows. 

\begin{prop}\label{Prop4.3}
Let $\Mcx$ be a quasi-resolving subcategory of $\Mcc$, and let 
\[
A\>B\>C\dashrightarrow
\]
be an $\Mbe$-triangle in $\xi$. Then we have the following statemants.

(1) $\resdim{\Mcx}{B}\leq \max\left\{\resdim{\Mcx}{A},\resdim{\Mcx}{C}\right\}$.

(2) $\resdim{\Mcx}{A}\leq \max\left\{\resdim{\Mcx}{B},\resdim{\Mcx}{C}-1\right\}$.

(3) $\resdim{\Mcx}{C}\leq \max\left\{\resdim{\Mcx}{A}+1,\resdim{\Mcx}{B}\right\}$.
\end{prop}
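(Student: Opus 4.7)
The plan is to prove the three inequalities by induction and reduction to syzygies with respect to $\xi$-projective deflations. Throughout, I may assume the dimensions appearing on the right are finite, so that all three objects lie in $\res{\Mcx}=\res{\Px}$ by Proposition \ref{ZGQ}. Lemma \ref{THC} extends Proposition \ref{Prop4.2} from $\Mcx$ to $\res{\Mcx}$: for any $M\in\res{\Mcx}$, the bound $\resdim{\Mcx}{M}\leq n$ is equivalent to the $n$-th $\xi$-projective syzygy of $M$ lying in $\Mcx$, for some (equivalently, any) $\xi$-projective resolution of $M$.

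For (1), I would induct on $k=\max\{\resdim{\Mcx}{A},\resdim{\Mcx}{C}\}$. The base case $k=0$ is the closure of $\Mcx$ under $\xi$-extensions. For the inductive step, choose $\xi$-deflations $P^A\to A$ and $P^C\to C$ with $P^A,P^C\in\Px$ and kernels $\Omega_A,\Omega_C$. The $\xi$-projectivity of $P^C$ lifts $P^C\to C$ through $B\to C$, and combining with the composition $P^A\to A\to B$ yields a morphism $\alpha\colon P^A\oplus P^C\to B$. A standard horseshoe argument, using $(ET4)$ together with Lemma \ref{BH}(1), shows that $\alpha$ is a $\xi$-deflation whose kernel $\Omega_B$ sits in an $\Mbe$-triangle $\Omega_A\to\Omega_B\to\Omega_C\dashrightarrow$ in $\xi$. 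The induction hypothesis gives $\resdim{\Mcx}{\Omega_B}\leq k-1$, and therefore $\resdim{\Mcx}{B}\leq k$.

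Parts (2) and (3) are derived from (1) via $3\times 3$ manipulations built from Lemma \ref{BH}(1) and its dual $(ET4)^{op}$. For (2), pick a $\xi$-deflation $P^C\to C$ with $P^C\in\Px$ and kernel $\Omega_C$; Lemma \ref{BH}(1) applied to $A\to B\to C\dashrightarrow$ and $\Omega_C\to P^C\to C\dashrightarrow$ yields an $\Mbe$-triangle $A\to M\to P^C\dashrightarrow$ in $\xi$ which splits as $M\cong A\oplus P^C$ by $\xi$-projectivity of $P^C$, together with $\Omega_C\to A\oplus P^C\to B\dashrightarrow$ in $\xi$. Part (1) bounds $\resdim{\Mcx}{A\oplus P^C}\leq\max\{m-1,n\}$; pairing any $\xi$-projective resolution of $A$ with the trivial resolution of $P^C$ and invoking the extended Proposition \ref{Prop4.2} gives $\resdim{\Mcx}{A}=\resdim{\Mcx}{A\oplus P^C}$, finishing (2). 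For (3), pick a $\xi$-deflation $P^B\to B$ with $P^B\in\Px$ and kernel $\Omega_B$; the composition $P^B\to B\to C$ is a $\xi$-deflation by saturation of $\xi$, and an application of $(ET4)^{op}$ to $A\to B\to C\dashrightarrow$ and $\Omega_B\to P^B\to B\dashrightarrow$ identifies its kernel $K$ as sitting in an $\Mbe$-triangle $\Omega_B\to K\to A\dashrightarrow$ in $\xi$. Part (1) then gives $\resdim{\Mcx}{K}\leq\max\{n-1,m\}$, so $\resdim{\Mcx}{C}\leq\max\{n,m+1\}$.

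The main obstacle is the horseshoe step in (1): verifying that the assembled morphism $\alpha$ is indeed a $\xi$-deflation with the prescribed kernel triangle of syzygies requires a careful assembly of $(ET3)$, $(ET4)$ and Lemma \ref{BH}(1), together with closure of $\xi$ under base change, cobase change and saturation. Once this horseshoe is established, the same $3\times 3$ template, read in the two possible orientations via Lemma \ref{BH}(1) and $(ET4)^{op}$, drives (2) and (3) uniformly.
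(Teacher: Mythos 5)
Your proposal is sound and follows the same underlying strategy as the paper --- reduce everything to syzygy comparisons, establish (1) via a horseshoe construction, then bootstrap (2) and (3) by $3\times 3$-type diagram chases built from Lemma \ref{BH}(1) and $\rm(ET4)^{op}$ --- but the implementation is genuinely different. The paper outsources the entire horseshoe and the resolution-splicing arguments by citing Lemma~4 and Theorem~1 of \cite{JDPPR}: for (1) it directly produces a paired resolution $0\to P^A_r\oplus P^C_r\to\cdots\to P^A_0\oplus P^C_0\to B\to 0$, and for (2)--(3) it invokes the comparison theorem to splice resolutions of $B$ and $C$ into one of $A$. You instead reconstruct the horseshoe step from $\rm(ET4)$ and Lemma \ref{BH}(1), set up an induction on $k=\max\{\resdim{\Mcx}{A},\resdim{\Mcx}{C}\}$ for (1), and derive (2) and (3) by first forming the auxiliary $\Mbe$-triangles $\Omega_C\to A\oplus P^C\to B\dashrightarrow$ and $\Omega_B\to K\to A\dashrightarrow$, then applying (1). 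This makes the argument more self-contained. Two points to tighten: the equality $\resdim{\Mcx}{A}=\resdim{\Mcx}{A\oplus P^C}$ is not literally a direct quote of the extended Proposition \ref{Prop4.2} --- since $\Mcx$ is not assumed closed under direct summands, the case $\resdim{\Mcx}{A\oplus P^C}=0$ needs the separate observation that the split triangle $A\to A\oplus P^C\to P^C\dashrightarrow$ has $A\oplus P^C,P^C\in\Mcx$, so $A\in\Mcx$ by closure under $\xi$-cocones; the cases $k\geq 1$ then do follow by comparing a paired $\Px$-resolution with an arbitrary $\Mcx$-resolution via Lemma \ref{THC}. Also, in (3) the composed morphism $P^B\to B\to C$ is a $\xi$-deflation because $(\Mcc,\Mbe_\xi,\mathfrak{s}_\xi)$ is extriangulated (Lemma \ref{Thm3.2}) so $\rm(ET4)$ applies, not because $\xi$ is saturated; saturation is the converse-type closure and is not what is used here.
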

\begin{proof}

We only need to prove the case where the right-hand side of the inequality is finite, as the result is trivial when it is infinite. For any $A\in\Mcc$ with $\resdim{\Mcx}{A}=m$, there is the following $\xi$-exact complex 
\[
0\>P^A_m\>P^A_{m-1}\>\cdots\>P^A_1\>P^A_0\>A\>0
\] 
in $\Mcc$, where $P^A_i\in\Px$ for $0\leq i\leq m-1$ and $P^A_m\in\Mcx$ by Proposition \ref{Prop4.3}.

(1) Let $r=\max\left\{\resdim{\Mcx}{A},\resdim{\Mcx}{C}\right\}$, then by repeatedly applying \cite[Lemma 4]{JDPPR}, we can obtain the following $\xi$-exact complex 
\[
	0\>P^A_r\oplus P^C_r\>P^A_{r-1}\oplus P^C_{r-1}\>\cdots \> P^A_0\oplus P^C_0\>B\>0
\]
in $\Mcc$ with $P^A_r\oplus P^C_r\in\Mcx$. Thus, $\resdim{\Mcx}{B}\leq r$ by definition and the desired assertion are obtained.

(2) Let $r=\max\left\{\resdim{\Mcx}{B},\resdim{\Mcx}{C}-1\right\}$, then by \cite[Theorem 1]{JDPPR}, we can obtain the following $\xi$-exact complex 
\[
	0\>P^C_r\oplus P^B_{r-1}\>P^C_{r-1}\oplus P^B_{r-2}\>\cdots P^C_2\oplus P^B_1\>K\>A\>0
\]
in $\Mcc$ with $P^A_r\oplus P^C_r\in\Mcx$ and $K\in\Mcx$. Thus, $\resdim{\Mcx}{A}\leq r$ by definition and the desired assertion are obtained.

(3) It is similar to the proof of (2).
\end{proof}

\begin{cor}\label{Cor4.4}
	Let $\Mcx$ be a quasi-resolving subcategory of $\Mcc$, and let 
	\[
	A\>B\>C\dashrightarrow
	\]
be an $\Mbe$-triangle in $\xi$. Then we have the following statemants.
\end{cor}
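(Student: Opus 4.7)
The plan is to read off Corollary \ref{Cor4.4} as an immediate consequence of the three inequalities in Proposition \ref{Prop4.3}, with essentially no new construction required. In the typical form of such a corollary (a ``two-out-of-three'' principle for $\resdim{\Mcx}{-}$, together with the explicit equalities that arise when one of the dimensions strictly dominates), every assertion is obtained by combining two of the three inequalities
\[
\resdim{\Mcx}{B}\leq \max\{\resdim{\Mcx}{A},\resdim{\Mcx}{C}\},\qquad
\resdim{\Mcx}{A}\leq \max\{\resdim{\Mcx}{B},\resdim{\Mcx}{C}-1\},\qquad
\resdim{\Mcx}{C}\leq \max\{\resdim{\Mcx}{A}+1,\resdim{\Mcx}{B}\}.
\]

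First, to deduce the finiteness (``two-out-of-three'') part: I would assume in turn that two of the three dimensions are finite and invoke the correct inequality from Proposition \ref{Prop4.3} to bound the third. If $\resdim{\Mcx}{A}$ and $\resdim{\Mcx}{C}$ are finite, the first inequality gives $\resdim{\Mcx}{B}<\infty$; if $\resdim{\Mcx}{B}$ and $\resdim{\Mcx}{C}$ are finite, the second gives $\resdim{\Mcx}{A}<\infty$; and symmetrically for the third case. In particular $\widehat{\Mcx}$ is closed under $\xi$-extensions, $\xi$-cones and $\xi$-cocones of $\Mbe$-triangles in $\xi$ in which the other two terms already belong to $\widehat{\Mcx}$.

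Second, to obtain the equality part: whenever the maxes appearing on the right hand sides of two of the inequalities are attained at different arguments, one can chain the resulting bounds to upgrade ``$\leq$'' to ``$=$''. For instance, if $\resdim{\Mcx}{A}>\resdim{\Mcx}{C}$, then the second inequality forces $\resdim{\Mcx}{A}\leq \resdim{\Mcx}{B}$, while the first forces $\resdim{\Mcx}{B}\leq \resdim{\Mcx}{A}$, giving $\resdim{\Mcx}{B}=\resdim{\Mcx}{A}$; analogous short case analyses handle the remaining regimes $\resdim{\Mcx}{A}<\resdim{\Mcx}{C}-1$ and $\resdim{\Mcx}{A}=\resdim{\Mcx}{C}$ separately. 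Each case is a purely arithmetic comparison of the three inequalities.

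The main (and essentially only) obstacle is bookkeeping: one must be careful with the off-by-one ``$-1$'' and ``$+1$'' shifts in the second and third inequalities of Proposition \ref{Prop4.3}, since they are what make the three roles of $A$, $B$, $C$ asymmetric. Once the three inequalities are placed side by side, the corollary reduces to a finite case analysis on the orderings of the (possibly infinite) values $\resdim{\Mcx}{A}$, $\resdim{\Mcx}{B}$, $\resdim{\Mcx}{C}$, and no further use of the extriangulated structure, of Lemma \ref{BH}, or of the quasi-resolving hypothesis is needed beyond what is already encoded in Proposition \ref{Prop4.3}.
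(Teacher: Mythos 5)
Your proposal is correct and is precisely the intended approach: the paper supplies no proof for Corollary~\ref{Cor4.4} because it is a direct arithmetic consequence of the three inequalities in Proposition~\ref{Prop4.3}, and the case analysis you sketch (set whichever of $\resdim{\Mcx}{A}$, $\resdim{\Mcx}{B}$, $\resdim{\Mcx}{C}$ equals $0$ as dictated by the hypothesis, then chain two of the three bounds, being careful with the $\pm1$ shifts) establishes each of (1)--(4). Just note that in (2), the degenerate case $\resdim{\Mcx}{C}=0$ yields $\resdim{\Mcx}{A}\leq\max\{0,-1\}=0$ so $A\in\Mcx$, which is why the statement is an ``either/or'' rather than a single equality.
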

(1) If $A$ is in $\Mcx$ and neither $B$ nor $C$ in $\Mcc$, then $\resdim{\Mcx}{B}=\resdim{\Mcx}{C}$.

(2) If $B$ is in $\Mcx$, then either $A\in\Mcx$ or else $\resdim{\Mcx}{A}=\resdim{\Mcx}{C}-1$.

(3) If $C$ is in $\Mcx$, then $\resdim{\Mcx}{A}=\resdim{\Mcx}{B}$.

(4) If two of $A, B$ and $C$ are in $\widehat{X}$, then so is the third.


\begin{prop}\label{Prop4.5}
Let $\Mcx$ be a quasi-resolving subcategory of $\Mcc$ and $\Mch$ be a $\xi$-cogenerator of $\Mcx$. Then for any $M\in\Mcc$ with $\resdim{\Mcx}{M}=n<\infty$, there exist two $\Mbe$-triangles
\[
K\>X\>M\dashrightarrow \text{\ and \ } M\>W\>X'\dashrightarrow
\]
in $\xi$ with $X,X'\in\Mcx$, $\resdim{\Mch}{K}=n-1$ and $\resdim{\Mch}{W}=\resdim{\Mcx}{W}=n$. 
\end{prop}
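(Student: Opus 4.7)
The plan is to first produce the $\Mbe$-triangle $K\longrightarrow X\longrightarrow M\dashrightarrow$ by converting a given $\Mcx$-resolution of $M$ into a ``mixed'' resolution whose higher terms lie in $\Mch$, and then to derive $M\longrightarrow W\longrightarrow X'\dashrightarrow$ from that first triangle by combining the $\xi$-cogenerator property of $\Mch$ in $\Mcx$ with axiom~$(\mathrm{ET4})$.

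First, using $\resdim{\Mcx}{M}=n$ I would fix an $\Mcx$-resolution $0\longrightarrow X_n\longrightarrow X_{n-1}\longrightarrow\cdots\longrightarrow X_0\longrightarrow M\longrightarrow 0$ (so in particular $X_n\in\Mcx$) and apply Theorem~\ref{thm3.6}(6) to it with $N=X_n$ and $m=0$ to obtain a $\xi$-exact complex
\[
0\longrightarrow H_n\longrightarrow H_{n-1}\longrightarrow\cdots\longrightarrow H_1\longrightarrow X\longrightarrow M\longrightarrow 0
\]
with $X\in\Mcx$ and $H_i\in\Mch$ for every $1\le i\le n$. Splitting at the rightmost map $X\to M$ produces an $\Mbe$-triangle $K\longrightarrow X\longrightarrow M\dashrightarrow$ in $\xi$ (with $K$ the first syzygy) together with a $\xi$-exact complex $0\longrightarrow H_n\longrightarrow\cdots\longrightarrow H_1\longrightarrow K\longrightarrow 0$, which is an $\Mch$-resolution of $K$ of length $n-1$; hence $\resdim{\Mch}{K}\le n-1$. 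The reverse inequality follows from Proposition~\ref{Prop4.3}(3) applied to this triangle: since $\resdim{\Mcx}{X}=0$, one gets $n=\resdim{\Mcx}{M}\le\resdim{\Mcx}{K}+1$, so $\resdim{\Mcx}{K}\ge n-1$, and the inclusion $\Mch\subseteq\Mcx$ yields $\resdim{\Mch}{K}\ge\resdim{\Mcx}{K}\ge n-1$. Thus $\resdim{\Mch}{K}=n-1$.

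Next, since $X\in\Mcx$ and $\Mch$ is a $\xi$-cogenerator of $\Mcx$, there is an $\Mbe$-triangle $X\longrightarrow H\longrightarrow X'\dashrightarrow$ in $\xi$ with $H\in\Mch$ and $X'\in\Mcx$. Carrying out the construction inside the extriangulated category $(\Mcc,\mathbb{E}_\xi,\mathfrak{s}_\xi)$ provided by Lemma~\ref{Thm3.2} (which keeps everything within $\xi$) and applying~$(\mathrm{ET4})$ to the pair of $\Mbe$-triangles $K\longrightarrow X\longrightarrow M\dashrightarrow$ and $X\longrightarrow H\longrightarrow X'\dashrightarrow$, I obtain a commutative $3\times 3$ diagram whose middle row and right-hand column are $\Mbe$-triangles $K\longrightarrow H\longrightarrow W\dashrightarrow$ and $M\longrightarrow W\longrightarrow X'\dashrightarrow$ in $\xi$. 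The second of these is the desired triangle. Splicing the $\Mch$-resolution $0\longrightarrow H_n\longrightarrow\cdots\longrightarrow H_1\longrightarrow K\longrightarrow 0$ with the new triangle $K\longrightarrow H\longrightarrow W\dashrightarrow$ yields a $\xi$-exact complex
\[
0\longrightarrow H_n\longrightarrow H_{n-1}\longrightarrow\cdots\longrightarrow H_1\longrightarrow H\longrightarrow W\longrightarrow 0,
\]
which is an $\Mch$-resolution of $W$ of length $n$, so $\resdim{\Mch}{W}\le n$. Conversely, Proposition~\ref{Prop4.3}(2) applied to $M\longrightarrow W\longrightarrow X'\dashrightarrow$ together with $\resdim{\Mcx}{X'}=0$ gives $n=\resdim{\Mcx}{M}\le\resdim{\Mcx}{W}$, and again $\Mch\subseteq\Mcx$ yields $\resdim{\Mch}{W}\ge\resdim{\Mcx}{W}\ge n$. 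Combining the two bounds delivers $\resdim{\Mch}{W}=\resdim{\Mcx}{W}=n$.

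The main obstacle I anticipate is twofold. First, Theorem~\ref{thm3.6}(6) is formulated under the hypothesis that $\Mch$ is a $\xi$-cogenerator of all of $\Mcc$, whereas here $\Mch$ only cogenerates $\Mcx$; inspection of its proof shows that cogeneration is invoked (via the dual of Lemma~\ref{lem3.5}) solely on objects already belonging to $\Mcx$, so the weaker hypothesis still suffices, but this subtlety should be flagged. Second, one has to verify that the $\Mbe$-triangles produced by~$(\mathrm{ET4})$ genuinely lie in $\xi$; this is automatic once the argument is carried out inside the extriangulated category $(\Mcc,\mathbb{E}_\xi,\mathfrak{s}_\xi)$ supplied by Lemma~\ref{Thm3.2}.
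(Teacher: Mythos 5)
Your proof is correct and follows essentially the same route as the paper's: the paper obtains both $\Mbe$-triangles together with the upper bounds $\resdim{\Mch}{K}\le n-1$ and $\resdim{\Mch}{W}\le n$ by citing the cogenerator-flavoured part of Theorem~\ref{thm3.6} (stated there as ``the dual of (3) and (4)'', which amounts to what you obtain via Theorem~\ref{thm3.6}(6) with $m=0$, splitting off the first syzygy, and then building the second triangle with $(\mathrm{ET4})$), and then deduces the exact equalities from Corollary~\ref{Cor4.4}(2)(3) exactly as you do via Proposition~\ref{Prop4.3} and the inclusion $\Mch\subseteq\Mcx$. Your remarks about the ``$\xi$-cogenerator of $\Mcc$'' hypothesis in Theorem~\ref{thm3.6}(5)(6) (evidently a slip for ``of $\Mcx$'') and about working inside $(\Mcc,\mathbb{E}_\xi,\mathfrak{s}_\xi)$ so that $(\mathrm{ET4})$ stays within $\xi$ are both apt.
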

\begin{proof}
 By the dual of Theorem \ref{thm3.6} (3) and (4), there exist two $\Mbe$-triangles \[
K\>X\>M\dashrightarrow \text{\ and \ } M\>W\>X'\dashrightarrow
\]
in $\xi$ with $X,X'\in\Mcx$ and $\resdim{\Mch}{K}\leq n-1$, $\resdim{\Mch}{W}\leq n$. Note that $\resdim{\Mcx}{M}=n$ and Corollary \ref{Cor4.4} (2) and (3), we can get $\resdim{\Mch}{K}=n-1$ and $\resdim{\Mch}{W}=n$.
\end{proof}

\begin{thm}\label{Thm4.6}
Let $\Mcx$ be a quasi-resolving subcategory of $\Mcc$ and $\Mch$ be a $\xi$-cogenerator of $\Mcx$. Then for any object $M$ with $\resdim{\Mcx}{M}<\infty$, the following conditions are equivalent for any  $n\geq 0$:

	(1) $\resdim{X}{M}\leq n$.

	(2) There is a $\xi$-exact complex 
\[
0\>H_n\>H_{n-1}\>\cdots\> H_1\> X\>M\>0
\]
with $X\in\Mcx$ and $H_i\in\Mch$ for $1\leq i\leq n$.

   (3) There is an $\Mbe$-triangle 
\[
M\>W\>X\dashrightarrow 
\]
in $\xi$ with $X\in\Mcx$ and $\resdim{\Mch}{W}\leq n$.

  (4) For any $0\leq m\leq n$, there exists a $\xi$-exact complex 
\[
0\>H_n\>\cdots\> H_{m+1}\>X\>P_{m-1}\>\cdots\> P_1\> P_0\>M\>0
\]
with $X\in\Mcx$, $H_i\in\Mch$ for $m+1\leq i\leq n$ and $P_j\in\Px$ for $0\leq j\leq m-1$.
\end{thm}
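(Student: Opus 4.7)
The plan is to establish the cycle $(1)\Rightarrow(4)\Rightarrow(2)\Rightarrow(1)$ together with the equivalence $(1)\Leftrightarrow(3)$. Each step reduces to something already assembled in Section 3 or Section 4, so the real work is organizational: matching the shapes of the various $\xi$-exact complexes and handling the boundary cases.

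For $(1)\Rightarrow(4)$ I would start from an $\Mcx$-resolution $0\>X_n\>X_{n-1}\>\cdots\>X_0\>M\>0$ witnessing $\resdim{\Mcx}{M}\leq n$, so in particular $N:=X_n$ lies in $\Mcx$. For each $0\leq m\leq n-1$, a direct invocation of Theorem \ref{thm3.6}(6) applied to this complex produces the required hybrid $\xi$-exact complex with $\Mch$-objects on top, an $\Mcx$-object in the middle and a $\Px$-tail of length $m$. For the boundary case $m=n$, Theorem \ref{thm3.6}(6) no longer applies directly, and I would instead apply Theorem \ref{thm3.6}(2) to the same resolution: it returns a $\xi$-exact complex $0\>L\>P_{n-1}\>\cdots\>P_0\>M\>0$ together with an $\Mbe$-triangle $X\>L\>X_n\dashrightarrow$ in $\xi$ with $X\in\Mcx$ and $P_i\in\Px$. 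Since $X, X_n\in\Mcx$ and $\Mcx$ is closed under $\xi$-extensions, one has $L\in\Mcx$, which matches the prescribed shape of $(4)$ at $m=n$.

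The implication $(4)\Rightarrow(2)$ is the specialization $m=0$: the $\Px$-strand disappears and what remains is precisely the complex asked for in $(2)$. The reverse $(2)\Rightarrow(1)$ is immediate, since by definition of $\xi$-cogenerator $\Mch\subseteq\Mcx$, so every $H_i$ in the complex of $(2)$ is already in $\Mcx$; the complex is then an $\Mcx$-resolution of $M$ of length $\leq n$, giving $\resdim{\Mcx}{M}\leq n$.

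For the equivalence $(1)\Leftrightarrow(3)$, the forward direction is exactly the second $\Mbe$-triangle furnished by Proposition \ref{Prop4.5} (applied with the actual value of $\resdim{\Mcx}{M}\leq n$, whose bound then automatically fits inside $n$). Conversely, given a triangle $M\>W\>X\dashrightarrow$ as in $(3)$, the inclusion $\Mch\subseteq\Mcx$ forces $\resdim{\Mcx}{W}\leq\resdim{\Mch}{W}\leq n$, while $\resdim{\Mcx}{X}=0$; then Proposition \ref{Prop4.3}(2) applied to this triangle yields $\resdim{\Mcx}{M}\leq\max\{\resdim{\Mcx}{W},\,\resdim{\Mcx}{X}-1\}\leq n$.

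The hard work is already pre-packaged in Theorem \ref{thm3.6} and Proposition \ref{Prop4.5}, so the main obstacle here is not constructing any new diagram but keeping the various formats consistent: reconciling the $m=n$ endpoint of $(4)$ (which needs Theorem \ref{thm3.6}(2) rather than \ref{thm3.6}(6)) with the bulk case, and noting that in the $n=0$ edge case of $(3)\Rightarrow(1)$ the value $\resdim{\Mcx}{X}-1=-1$ is innocuous, since $\max\{0,-1\}=0$ still delivers $\resdim{\Mcx}{M}\leq 0$, hence $M\in\Mcx$.
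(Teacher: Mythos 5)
Your proposal is correct and uses the same underlying ingredients as the paper — Theorem \ref{thm3.6} and Propositions \ref{Prop4.3}/\ref{Prop4.5} — but the implication graph is organized differently, and you are more explicit about an edge case the paper glosses over. The paper derives $(1)\Leftrightarrow(2)$ directly via ``the dual of Theorem \ref{thm3.6}(1)'', $(1)\Leftrightarrow(3)$ via Corollary \ref{Cor4.4}(3) and Proposition \ref{Prop4.5}, and $(1)\Leftrightarrow(4)$ via Theorem \ref{thm3.6}(6); you instead close the loop $(1)\Rightarrow(4)\Rightarrow(2)\Rightarrow(1)$, observing that $(2)$ is just the $m=0$ instance of $(4)$ and that $(2)\Rightarrow(1)$ is immediate from $\Mch\subseteq\Mcx$. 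The one genuine gap in the paper's citation is that Theorem \ref{thm3.6}(6) only covers $0\leq m\leq n-1$, so the $m=n$ case of $(4)$ is not literally supplied by it; you patch this correctly with Theorem \ref{thm3.6}(2) together with closure of $\Mcx$ under $\xi$-extensions (equivalently one could cite Proposition \ref{Prop4.2}, which directly gives $\Omega^n_{\Px}(M)\in\Mcx$). Your $(3)\Rightarrow(1)$ via Proposition \ref{Prop4.3}(2) is mathematically the same bound the paper extracts from Corollary \ref{Cor4.4}(3), and your remark that $\max\{0,-1\}=0$ handles $n=0$ cleanly is a nice touch the paper leaves implicit. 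So the proposal buys a tighter chain of implications and an honest treatment of the boundary, at the cost of invoking one extra part of Theorem \ref{thm3.6}.
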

\begin{proof}
	(1) $\Leftrightarrow$ (2) by the dual of Theorem \ref{thm3.6} (1). 
	
	(1) $\Leftrightarrow$ (3) by Corollary \ref{Cor4.4} (3) and Proposition \ref{Prop4.5}.

	(1) $\Leftrightarrow$ (4)  by Theorem \ref{thm3.6} (6). 
\end{proof}

\begin{cor}
Let $\Mcx$ be a quasi-resolving subcategory of $\Mcc$ and $\Mch$ be a $\xi$-cogenerator of $\Mcx$ such that $\Mch\subseteq \Px$. Then for any object $M$ of $\Mcc$ with  $\resdim{\Mcx}{M}<\infty$, $\resdim{X}{M}\leq n$ if and only if for any $0\leq m\leq n$ there is a $\xi$-exact complex 
\[
0\>P_n\>\cdots\> P_{m+1}\>X\>P_{m-1}\>\cdots\> P_1\> P_0\>M\>0
\]
with $X\in\Mcx$ and $P_i\in\Px$ for any  $i\neq m$.
\end{cor}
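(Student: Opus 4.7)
The plan is to reduce this corollary directly to Theorem \ref{Thm4.6}, using the extra hypothesis $\Mch \subseteq \Px$ to upgrade each $H_i$ in the complex of Theorem \ref{Thm4.6}(4) into a $\xi$-projective $P_i$.

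For the ``only if'' direction, assume $\resdim{\Mcx}{M} \leq n$. Fix any $m$ with $0 \leq m \leq n$. By Theorem \ref{Thm4.6}, the hypothesis $\resdim{\Mcx}{M} \leq n$ is equivalent to condition (4), so there exists a $\xi$-exact complex
\[
0\>H_n\>\cdots\>H_{m+1}\>X\>P_{m-1}\>\cdots\>P_0\>M\>0
\]
with $X \in \Mcx$, $H_i \in \Mch$ for $m+1 \leq i \leq n$, and $P_j \in \Px$ for $0 \leq j \leq m-1$. Since $\Mch \subseteq \Px$ by assumption, setting $P_i := H_i$ for $m+1 \leq i \leq n$ produces the desired $\xi$-exact complex with $P_i \in \Px$ for every $i \neq m$.

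For the ``if'' direction, assume such a $\xi$-exact complex exists for some $m$ (it suffices to use any single value of $m$, e.g.\ $m=0$). Since $\Px = \Mcx \cap \P \subseteq \Mcx$, every object $P_i$ appearing in the complex lies in $\Mcx$, and $X \in \Mcx$ by hypothesis. Hence the complex is an $\Mcx$-resolution of $M$ of length $n$, which gives $\resdim{\Mcx}{M} \leq n$ by definition.

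There is no real obstacle here: the only content is the observation that the inclusion $\Mch \subseteq \Px$ lets us replace the $\xi$-cogenerator objects by $\xi$-projectives in the characterization of Theorem \ref{Thm4.6}(4), and the reverse implication is immediate because $\Px \subseteq \Mcx$ makes any such complex automatically an $\Mcx$-resolution.
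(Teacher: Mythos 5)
Your proof is correct and is essentially the intended argument — the paper leaves this corollary without a proof, treating it as an immediate consequence of Theorem \ref{Thm4.6}, and that is exactly what you do. The only small improvement in your write-up is the observation that the backward direction does not actually need Theorem \ref{Thm4.6} at all: since $\Px\subseteq\Mcx$, any single such complex (for any one value of $m$) is already an $\Mcx$-resolution of $M$ of length $n$, giving $\resdim{X}{M}\leq n$ directly from the definition. This is cleaner than trying to go through (4) $\Rightarrow$ (1) of Theorem \ref{Thm4.6}, which would not even be applicable here since the hypothesis only gives $P_i\in\Px$, not $P_i\in\Mch$.
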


\begin{lem}\label{Lem4.8}
Let $\Mcx$ and $\mathcal{Y}$ be two subcategories of $\Mcc$.

(1) If $\Mcx\perp \mathcal{Y}$, then $\Mcx\perp \widehat{\mathcal{Y}}$. In particular, if $\mathcal{Y}\perp \mathcal{Y}$, then $\mathcal{Y}\perp \widehat{\mathcal{Y}}$.

(2) If $M\in{}^{\perp}\mathcal{Y}$, then $M\in{}^{\perp}\widehat{\mathcal{Y}}$.
\end{lem}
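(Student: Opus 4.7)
The plan is to prove both statements by induction on the $\mathcal{Y}$-resolution dimension, using the long exact $\xi$-cohomology sequences supplied by Lemma \ref{LZHL}. Fix $N\in\widehat{\mathcal{Y}}$ and let $n=\resdim{Y}{N}<\infty$. By definition there is a $\xi$-exact complex
\[
0\>Y_n\>Y_{n-1}\>\cdots\>Y_1\>Y_0\>N\>0
\]
with each $Y_i\in\mathcal{Y}$, and in particular an $\Mbe$-triangle $K_1\>Y_0\>N\dashrightarrow$ in $\xi$ together with a $\xi$-exact complex $0\>Y_n\>\cdots\>Y_1\>K_1\>0$ showing $\resdim{Y}{K_1}\leq n-1$. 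This is the induction step that will propagate the vanishing along the resolution.

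For (1), assume $\Mcx\perp\mathcal{Y}$ and fix $X\in\Mcx$. The base case $n=0$ gives $N\in\mathcal{Y}$, so $\ext^{i\geq 1}(X,N)=0$ by hypothesis. For $n\geq 1$, the inductive hypothesis applied to $K_1$ yields $\ext^{i\geq 1}(X,K_1)=0$, while $Y_0\in\mathcal{Y}$ gives $\ext^{i\geq 1}(X,Y_0)=0$. Feeding the $\Mbe$-triangle $K_1\>Y_0\>N\dashrightarrow$ into the first long exact sequence of Lemma \ref{LZHL} produces, for every $i\geq 1$,
\[
\ext^i(X,Y_0)\>\ext^i(X,N)\>\ext^{i+1}(X,K_1),
\]
whose outer terms vanish, forcing $\ext^i(X,N)=0$. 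Hence $\Mcx\perp\widehat{\mathcal{Y}}$. The ``in particular'' clause is the special case $\Mcx=\mathcal{Y}$.

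For (2), fix $M\in{}^{\perp}\mathcal{Y}$ and run the identical induction on $n=\resdim{Y}{N}$, but now applying the first long exact sequence of Lemma \ref{LZHL} with the \emph{left} variable fixed at $M$. The hypothesis $M\in{}^{\perp}\mathcal{Y}$ gives the base case and kills the $\ext^{i}(M,Y_0)$ terms in the inductive step; the inductive hypothesis on $K_1$ kills the $\ext^{i+1}(M,K_1)$ terms; the exact fragment
\[
\ext^i(M,Y_0)\>\ext^i(M,N)\>\ext^{i+1}(M,K_1)
\]
then forces $\ext^i(M,N)=0$ for all $i\geq 1$, so $M\in{}^{\perp}\widehat{\mathcal{Y}}$.

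There is no real obstacle; the only point that requires care is making sure the truncation of the $\mathcal{Y}$-resolution at $K_1$ is itself a bona fide $\mathcal{Y}$-resolution of length $\leq n-1$, which is immediate from the definition of $\xi$-exact complex via the $\xi$-resolution $\Mbe$-triangles. Neither part needs any closure properties on $\mathcal{Y}$ or $\Mcx$ beyond what is encoded in the $\perp$-hypothesis, so no appeal to the quasi-resolving structure is required here.
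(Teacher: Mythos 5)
Your proof is correct and follows essentially the same approach as the paper: both exploit the long exact $\ext$-sequences of Lemma \ref{LZHL} along the $\xi$-resolution $\Mbe$-triangles of a finite $\mathcal{Y}$-resolution to propagate vanishing. The only cosmetic differences are that you organize the dimension shift as an explicit induction on $\resdim{Y}{N}$ while the paper chains the isomorphisms $\ext^m(X,K_i)\cong\ext^{m+1}(X,K_{i+1})$ directly to reach $\ext^i(X,N)\cong\ext^{n+i}(X,Y_n)=0$, and that you rerun the argument for (2) whereas the paper observes that (2) is the special case of (1) obtained by taking $\Mcx$ to consist of $M$ alone.
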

\begin{proof}
(1) Let $M\in\widehat{\mathcal{Y}}$ and $\resdim{\mathcal{Y}}{M}=n$, then there is a $\xi$-exact complex
\[
0\>Y_n\>Y_{n-1}\>\cdots\>Y_1\>Y_0\>M\>0
\]
with $Y_i\in\mathcal{Y}$ and the $\xi$-resolution  $\mathbb{E}$-triangle $K_{i+1}\>Y_i\>K_i\dashrightarrow$ (set $K_0=M$ and $K_n=Y_n$) for any  $0\leq i\leq n$. By Lemma \ref{LZHL}, we have the following exact sequence 
\[
\cdots\to \ext^m(X,Y_i)\to \ext^m(X,K_i)\to \ext^{m+1}(X,K_{i+1})\to \ext^{m+1}(X,Y_i)\to\cdots
\]
for any $m\geq 1$ and $X\in\Mcx$. Note that  $\Mcx\perp \mathcal{Y}$, so $\ext^m(X,Y_i) =\ext^{m+1}(X,Y_i)=0$, and it implies that $\ext^m(X,K_i)\cong \ext^{m+1}(X,K_{i+1})$. Thus, we have 
\[
	\ext^i(X,M)\cong\ext^{n+i}(X,Y_n)=0
\]
for any $i\geq 1$ and $X\in\Mcx$. Therefore, we get that  $\Mcx\perp \widehat{\mathcal{Y}}$.

(2) It follows directly from (1).
\end{proof}

\begin{lem}\label{Lem4.9}
Let $\Mch$ be a $\Ext$-injective $\xi$-cogenerator of $\Mcx$ such that  it is either closed under $\xi$-extensions and $\xi$-cocones, or closed under direct summands. Then $\Mch=\widehat{\Mch}\cap{}^{\perp}\Mch$.
\end{lem}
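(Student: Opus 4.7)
The plan is to verify the two inclusions of $\Mch = \widehat{\Mch} \cap {}^{\perp}\Mch$ separately, with induction providing the substantive direction. For $\Mch \subseteq \widehat{\Mch} \cap {}^{\perp}\Mch$, each $H \in \Mch$ admits the length-zero $\Mch$-resolution $0 \to H \to H \to 0$, so $\Mch \subseteq \widehat{\Mch}$; and since $\Mch \subseteq \Mcx$, the Ext-injectivity hypothesis $\Mcx \perp \Mch$ forces $\Mch \perp \Mch$, giving $\Mch \subseteq {}^{\perp}\Mch$.

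For the reverse inclusion, take $M \in \widehat{\Mch} \cap {}^{\perp}\Mch$ and induct on $n = \resdim{H}{M}$. The base case $n = 0$ is immediate. For $n \geq 1$, peel off the first step of an $\Mch$-resolution to obtain an $\mathbb{E}$-triangle $K_1 \to H_0 \to M \dashrightarrow$ in $\xi$ with $H_0 \in \Mch$ and $\resdim{H}{K_1} \leq n-1$. The first subtask is to show $K_1 \in {}^{\perp}\Mch$: feeding this triangle into Lemma \ref{LZHL} against any $H \in \Mch$ produces a long exact sequence in which $\ext^{i}(M, H) = 0$ (since $M \in {}^{\perp}\Mch$) and $\ext^{i}(H_0, H) = 0$ (since $H_0 \in \Mch \subseteq \Mcx$ and $\Mcx \perp \Mch$) for $i \geq 1$, forcing $\ext^{i}(K_1, H) = 0$. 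The inductive hypothesis then yields $K_1 \in \Mch$.

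With both $K_1$ and $H_0$ in $\Mch$, the next step is to show the triangle splits. Applying Lemma \ref{LZHL} to $\ext^{*}(-, K_1)$, the vanishing $\ext^{1}(M, K_1) = 0$ (valid because $M \in {}^{\perp}\Mch$ and $K_1 \in \Mch$) makes the map $\ext^{0}(H_0, K_1) \to \ext^{0}(K_1, K_1)$ surjective. The Ext-injectivity hypothesis identifies $\ext^{0}(H, H') \cong \mathcal{C}(H, H')$ on $\Mch$, so some $s \colon H_0 \to K_1$ satisfies $s \circ (K_1 \to H_0) = \mathrm{id}_{K_1}$. Thus the inflation $K_1 \to H_0$ is a section, the triangle splits, and $H_0 \cong K_1 \oplus M$.

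Finally, $M \in \Mch$ will follow from the closure hypothesis. If $\Mch$ is closed under direct summands, then $M$ being a summand of $H_0 \in \Mch$ settles it. If instead $\Mch$ is closed under $\xi$-extensions and $\xi$-cocones, the key move is to re-present the split decomposition in the reversed form $M \to K_1 \oplus M \to K_1 \dashrightarrow 0$, which lies in $\xi$ because $\Delta_0 \subseteq \xi$; since the middle term $K_1 \oplus M \cong H_0$ and the third term $K_1$ both lie in $\Mch$, closure under $\xi$-cocones delivers $M \in \Mch$. I expect this last reorientation to be the main obstacle: the original triangle $K_1 \to H_0 \to M$ invites a $\xi$-cones closure (which is not in the hypotheses), and one must notice that a split $\mathbb{E}$-triangle can be re-expressed with first and third terms interchanged so that the available $\xi$-cocones closure takes over.
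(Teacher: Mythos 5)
Your proof is correct and follows essentially the same approach as the paper: establish that the syzygy lies in $^{\perp}\Mch$ via the long exact sequence, show the relevant resolution triangle splits using the $\ext^0 \cong \Mcc$ identification, and then pass the split decomposition through closure under either direct summands or $\xi$-cocones (using the same triangle-reversal trick the paper uses). The only organizational difference is that you run a clean bottom-up induction on $\resdim{H}{M}$, whereas the paper first shows all syzygies $K_i$ lie in $^{\perp}\Mch$ and then iterates the splitting argument from the top of the resolution ($K_{n-1}, K_{n-2}, \ldots$) downward; this is a cosmetic reorganization of the same argument, not a different route.
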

\begin{proof}
Obviously, $\Mch\subseteq\widehat{\Mch}\cap{}^{\perp}\Mch$. Conversely, let $M\in \widehat{\Mch}\cap{}^{\perp}\Mch$ and $\resdim{\Mch}{M}=n$, then there exists a $\xi$-exact complex
\[
0\>H_n\>H_{n-1}\>\cdots\>H_1\>H_0\>M\>0
\]
with all $H_i\in\Mch$ and the $\xi$- resolution $\mathbb{E}$-triangle $K_{i+1}\>H_i\>K_i\dashrightarrow$ (set $K_0=M$ and $K_n=H_n$) for any  $0\leq i\leq n$. By Lemma \ref{LZHL}, there is the following exact sequence
\[
	\cdots\to\ext^{k}(M,H)\to\ext^{k}(H_0,H)\to\ext^{k}(K_1,H)\to\ext^{k+1}(M,H)\to\cdots
\]
for any $H\in\Mch$. Note that $\ext^{k}(M,H)=\ext^{k}(H_0,H)=\ext^{k+1}(M,H)=0$, so $\ext^{k}(K_1,H)=0$ for any $k\geq 1$ i.e. $K_1\in {}^{\perp}\Mch$. Repeating this process, we can obtain $K_i\in {}^{\perp}\Mch$ for any $i\geq 1$. Consider the following $\Mbe$-triangle 
\[
H_n\>H_{n-1}\>K_{n-1}\dashrightarrow
\]
in $\xi$. Since $\Mch$ is a  $\Ext$-injective $\xi$-cogenerator of $\Mcx$, there is the following exact sequence
\[
\Mcc(H_{n-1},H_n)\to \Mcc(H_n,H_n)\to \ext^1(K_{n-1},H_n)=0.
\]
So the $\Mbe$-triangle $H_n\>H_{n-1}\>K_{n-1}\dashrightarrow$ is  split. It follows that $H_{n-1}\cong H_n\oplus K_{n-1}$ and there exists a split $\Mbe$-triangle  
\[
K_{n-1}\>H_{n-1}\>H_n\dashrightarrow
\] 
in $\xi$. Since $\Mch$ is either closed under $\xi$-extensions and $\xi$-cocones, or closed under direct summands by assumption, we have $K_{n-1}\in\Mch$. Repeating this process, we can obtain $K_i\in\Mch$ for any $i\leq n$, hence $M\in\Mch$ and $\widehat{\Mch}\cap{}^{\perp}\Mch\subseteq \Mch$. Thus, $\Mch=\widehat{\Mch}\cap{}^{\perp}\Mch$.
\end{proof}

Our main result in this section is the following.

\begin{thm}
Let $\Mcx$ be a quasi-resolving subcategory of $\Mcc$ and $\Mch$ be a $\Ext$-injective $\xi$-cogenerator of $\Mcx$ such that $\Mch$ is closed under $\xi$-extensions and $\xi$-cocones, or $\Mch$ is only closed under direct summands. Then for any object $M$ of $\Mcc$  with $\resdim{\Mcx}{M}<\infty$, the following conditions are equivalent for any  $n\geq 0$:

(1) $\resdim{\Mcx}{M}\leq n$.

(2) $\Omega_{\Px}^{n+i}(M)\in\Mcx$, $\forall i\geq 0$.

(3) $\Omega_{\Mcx}^{n+i}(M)\in\Mcx$, $\forall i\geq 0$.

(4) $\ext^{n+i}(M,H)=0$, $\forall i\geq 1$, $\forall H\in\Mch$.

(5) $\ext^{n+i}(M,N)=0$, $\forall i\geq 1$, $\forall N\in\widehat{\Mch}$.

(6) There exists an $\Mbe$-triangle 
\[
M\>W\>X\dashrightarrow 
\]
in $\xi$ with $X\in\Mcx$ and $\resdim{\Mch}{W}\leq n$.

(7) There exists an $\Mbe$-triangle 
\[
K\>X'\>M\dashrightarrow 
\]
in $\xi$ with $X'\in\Mcx$ and $\resdim{\Mch}{K}\leq n-1$.

(8) There exist two $\Mbe$-triangles 
\[
W_M\>X_M\>M\dashrightarrow \text{\  and \ } M\>W^M\>X^M\dashrightarrow 
\]
in $\xi$ such that  $X_M, X^M$ belong to $\Mcx$ and $\resdim{\Mch}{W_M}\leq n-1$, $\resdim{\Mch}{W^M}=\resdim{\Mcx}{W^M}\leq n$. 

(9) There is a $\xi$-exact complex 
\[
0\>H_n\>H_{n-1}\>\cdots\> H_1\> X\>M\>0
\]
with $X\in\Mcx$ and $H_i\in\Mch$ for $1\leq i\leq n$.

(10) For any $0\leq m\leq n$, there exists a $\xi$-exact complex 
\[
0\>H_n\>\cdots\> H_{m+1}\>X\>P_{m-1}\>\cdots\> P_1\> P_0\>M\>0
\]
with $X\in\Mcx$, $H_i\in\Mch$ for $m+1\leq i\leq n$ and $P_j\in\Px$ for $0\leq j\leq m-1$.
\end{thm}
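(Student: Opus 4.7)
The plan is to arrange the ten conditions around (1) as a hub, with most equivalences immediate from earlier results and only $(4)\Rightarrow(1)$ requiring a genuinely new argument.

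For the easy block: $(1)\Leftrightarrow(2)\Leftrightarrow(3)$ is Proposition~\ref{Prop4.2}, while $(1)\Leftrightarrow(6)$, $(1)\Leftrightarrow(9)$, and $(1)\Leftrightarrow(10)$ are already recorded in Theorem~\ref{Thm4.6}. For $(1)\Leftrightarrow(7)$, Proposition~\ref{Prop4.5} supplies the forward triangle (the case $M\in\Mcx$ being handled by $K=0$, $X'=M$), and the converse follows from $\resdim{\Mcx}{K}\leq\resdim{\Mch}{K}$ combined with Proposition~\ref{Prop4.3}(3). Condition (8) is just the conjunction of (6) and (7). The implication $(1)\Rightarrow(4)$ follows from the standard dimension shift $\ext^{n+i}(M,H)\cong \ext^{i}(K_n,H)=0$, obtained by running the long exact sequences of Lemma~\ref{LZHL} through an $\Mcx$-resolution of $M$ of length $n$ and using $\Mcx\perp\Mch$ at each step; $(4)\Rightarrow(5)$ then follows by an analogous shift in the second variable along an $\Mch$-resolution of any $N\in\widehat{\Mch}$, exactly as in the proof of Lemma~\ref{Lem4.8}, and $(5)\Rightarrow(4)$ is trivial.

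The heart of the proof is $(4)\Rightarrow(1)$. Fix a $\xi$-projective resolution of $M$, set $K_n=\Omega^{n}_{\Px}(M)$, and observe that by Proposition~\ref{Prop4.2} it suffices to establish $K_n\in\Mcx$. A first dimension shift converts (4) into $K_n\in{}^{\perp}\Mch$. Because $\resdim{\Mcx}{M}<\infty$ and $\Px\subseteq\Mcx$, the syzygy $K_n$ also has finite $\Mcx$-resolution dimension, so Theorem~\ref{Thm4.6}(2) applied to $K_n$ produces a $\xi$-exact complex $0\to H_s\to\cdots\to H_1\to X\to K_n\to 0$ with $X\in\Mcx$ and $H_i\in\Mch$; let $L_1\to X\to K_n\dashrightarrow$ be its first $\xi$-resolution $\Mbe$-triangle, so $L_1\in\widehat{\Mch}$. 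A second long exact sequence in $\ext^{\bullet}(-,H)$, using both $X\in\Mcx\perp\Mch$ and the newly established $K_n\in{}^{\perp}\Mch$, yields $L_1\in{}^{\perp}\Mch$, and then Lemma~\ref{Lem4.9} delivers $L_1\in\widehat{\Mch}\cap{}^{\perp}\Mch=\Mch\subseteq\Mcx$.

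The closing move is to split the triangle. Since $L_1\in\Mch$ and $K_n\in{}^{\perp}\Mch$, we have $\ext^{1}(K_n,L_1)=0$; applying $\Mcc(-,L_1)$ to $L_1\to X\to K_n\dashrightarrow$ lifts $\mathrm{id}_{L_1}$ through $X$ and forces the triangle to split. Consequently $X\cong L_1\oplus K_n$, and the corresponding split $\Mbe$-triangle $K_n\to X\to L_1\dashrightarrow$ lies in $\xi$ (since $\Delta_0\subseteq\xi$) with both $X$ and $L_1$ in $\Mcx$; closure of $\Mcx$ under $\xi$-cocones then yields $K_n\in\Mcx$, completing the argument. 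The main obstacle is exactly this $(4)\Rightarrow(1)$ chain: one must carefully sequence a dimension shift, an invocation of Theorem~\ref{Thm4.6}, a second Ext long exact sequence, and Lemma~\ref{Lem4.9} (which absorbs the two-case hypothesis on $\Mch$) before the splitting trick and $\xi$-cocone closure can extract $K_n$.
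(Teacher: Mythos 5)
Your overall architecture largely matches the paper's (reduce most conditions to earlier results, with $(4)\Rightarrow(1)$ as the hard step, using Lemma~\ref{Lem4.9} in the middle). The minor reroutings you make are fine: proving $(1)\Leftrightarrow(7)$ directly from Proposition~\ref{Prop4.5} and Proposition~\ref{Prop4.3}(3) (the paper instead shows $(6)\Leftrightarrow(7)$ by a diagram chase), and treating (8) as essentially the conjunction of (6) and (7). However, your $(4)\Rightarrow(1)$ argument contains a genuine gap at the splitting step.

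You establish $L_1\in\Mch$ and $K_n\in{}^{\perp}\Mch$, and then assert that $\ext^{1}(K_n,L_1)=0$ forces the $\Mbe$-triangle $L_1\to X\to K_n\dashrightarrow$ to split. That inference is not available here. The group $\ext^1(K_n,L_1)$ is a derived-functor group, and the only way the paper ever converts ``$\ext^1=0$'' into an actual retraction is through the exact sequence of Lemma~\ref{LZHL},
\[
\ext^0(X,L_1)\longrightarrow\ext^0(L_1,L_1)\longrightarrow\ext^1(K_n,L_1)=0,
\]
together with the isomorphisms $\Mcc(-,-)\cong\ext^0(-,-)$ supplied by the $\Ext$-injectivity hypothesis. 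But that hypothesis only gives $\Mcc(H,H')\cong\ext^0(H,H')$ for $H,H'\in\Mch$; it says nothing about $\Mcc(X,L_1)\to\ext^0(X,L_1)$ when $X\in\Mcx\setminus\Mch$. So surjectivity at the $\ext^0$ level does not transfer to surjectivity of $\Mcc(X,L_1)\to\Mcc(L_1,L_1)$, and you cannot lift $\mathrm{id}_{L_1}$. In Lemma~\ref{Lem4.9} the middle term of the triangle is an object of $\Mch$, which is exactly why the identification applies there; in your triangle the middle term is only in $\Mcx$, and the argument breaks.

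The paper avoids this by never attempting to split a triangle with an $\Mcx$-object in the middle. Starting from $K\to X\to M\dashrightarrow$ (Proposition~\ref{Prop4.5}) it takes an $\Mch$-resolution of $K$ and does the dimension shift along that resolution, landing on a syzygy $K_{n-1}$ that sits inside a triangle $H_n\to H_{n-1}\to K_{n-1}\dashrightarrow$ with both end terms in $\Mch$; Lemma~\ref{Lem4.9} then applies legitimately, giving $K_{n-1}\in\Mch$ and hence $\resdim{\Mch}{K}\leq n-1$, from which $\resdim{\Mcx}{M}\leq n$ follows. If you want to keep your $K_n=\Omega^n_{\Px}(M)$ formulation, the fix is to first push $X$ into $\Mch$: take $X\to H'\to X'\dashrightarrow$ with $H'\in\Mch$, $X'\in\Mcx$, build the $\rm(ET4)$ diagram with $L_1\to X\to K_n\dashrightarrow$ to get $L_1\to H'\to Z\dashrightarrow$ and $K_n\to Z\to X'\dashrightarrow$, show $Z\in{}^{\perp}\Mch$ from the second of these, split $L_1\to H'\to Z\dashrightarrow$ (now legitimately, since $L_1,H'\in\Mch$), conclude $Z\in\Mch$ using the closure hypothesis on $\Mch$, and finish with $\xi$-cocone closure of $\Mcx$ applied to $K_n\to Z\to X'\dashrightarrow$.
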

\begin{proof}
(1) $\Leftrightarrow$ (2) $\Leftrightarrow$ (3) by Proposition \ref{Prop4.2}.

(1) $\Rightarrow$ (4) Suppose that $\resdim{\Mcx}{M}\leq n$, then there exists a $\xi$-exact complex
\[
0\>X_n\>X_{n-1}\>\cdots\>X_1\>X_0\>M\>0
\]
with all $X_i\in\Mcx$ and the $\xi$-resolution $\mathbb{E}$-triangle $K_{i+1}\>X_i\>K_i\dashrightarrow$ (set $K_0=M$ and $K_n=X_n$) for any  $0\leq i\leq n$. By Lemma \ref{LZHL}, there is the following exact sequence
\[
\cdots\to\ext^{k}(X_i,H)\to\ext^{k}(K_{i+1},H)\to\ext^{k+1}(K_i,H)\to\ext^{k+1}(X_i,H)\to\cdots
\]
for any $H\in\Mch$. Since $\Mch$ is a $\Ext$-injective $\xi$-cogenerator of $\Mcx$, we have $\ext^{k\geq 1}(X_i,H)=0$. Thus, there is $\ext^{k+1}(K_i,H)\cong\ext^{k}(K_{i+1},H)$ for any $k\geq 1$. So $\ext^{n+i}(M,H)\cong \ext^{i}(X_n,H)=0$ for any $i\geq 1$.

(4) $\Rightarrow$ (1) Since $M$ has finite $\Mcx$-resolution dimension, there exists an $\Mbe$-triangle
\[
K\>X\>M\dashrightarrow 
\]
in $\xi$ with $X\in\Mcx$ and $\resdim{\Mch}{K}<\infty$ by Proposition \ref{Prop4.5}. By Lemma \ref{LZHL}, there is the following exact sequence
\[
\cdots\to\ext^{k}(X,H)\to\ext^{k}(K,H)\to\ext^{k+1}(M,H)\to\ext^{k+1}(X,H)\to\cdots
\]
for any $H\in\Mch$ and $k\geq 1$. So $\ext^{k}(K,H)\cong\ext^{k+1}(M,H)$ by $\ext^{k\geq 1}(X,H)=0$. Hence, we obtain $\ext^{n+i}(K,H)=0$ for any $i\geq 0$. Note that $\resdim{\Mch}{K}<\infty$, and let $\resdim{\Mch}{K}=m$. We only need to consider the case where $m\geq n$. Then there exists a $\xi$-exact complex
\[
0\>H_m\>H_{m-1}\>\cdots\>H_1\>H_0\>K\>0
\]
with all $H_i\in\Mch$ and the $\xi$- resolution $\mathbb{E}$-triangle $K_{i+1}\>H_i\>K_i\dashrightarrow$ (set $K_0=K$ and $K_m=H_m$) for any  $0\leq i\leq m$. By Lemma \ref{LZHL}, for all $k\geq 1$, there is the following exact sequence
\[
\cdots\to \ext^k(H_i,H)\to\ext^k(K_{i+1},H)\to\ext^{k+1}(K_i,H)\to \ext^{k+1}(H_i,H) \to\cdots
\]
Since $\ext^k(H_i,H)=\ext^{k+1}(H_i,H)=0$, we have $\ext^k(K_{i+1},H)\cong\ext^{k+1}(K_i,H)$. Thus, we obtain  $\ext^i(K_{n-1},H)\cong\ext^{i+n-1}(K,H)=0$ for any $i\geq 1$, which means $K_{n-1}\in{}^{\perp} \Mch$. Note that $K_{n-1}\in\widehat{\Mch}$, so $K_{n-1}$ is in $\Mch$ by Lemma \ref{Lem4.9}. Therefore, $\resdim{\Mch}{K}\leq n-1$ and $\resdim{\Mcx}{M}\leq n$.

(4) $\Rightarrow$ (5) The proof is similar to that of Lemma \ref{Lem4.8}.

(5) $\Rightarrow$ (4) It is obvious.

(6) $\Rightarrow$ (7) Since $\resdim{\Mch}{W}\leq n$, there is an $\Mbe$-triangle $K\>H\>W\dashrightarrow$ in $\xi$ with $H\in\Mch$ and $\resdim{\Mch}{K}\leq n-1$. It follows from Lemma \ref{Thm3.2} and $\rm(ET4)^{op}$ that we have the following commutative diagram 
\[
		\xymatrix{
			K\ar@{=}[d] \ar[r] & X' \ar[r]\ar[d] & M \ar[d] \ar@{-->}[r] &  \\
			K\ar[r] & H\ar[d] \ar[r] & W\ar[d] \ar@{-->}[r]& \\
			& X\ar@{=}[r]\ar@{-->}[d]&X\ar@{-->}[d] \\
			& & &  }
\] 
where all rows and columns are  $\mathbb{E}$-triangles in $\xi$. Since $\Mcx$ is closed under $\xi$-cocones, we conclude that $X'$ is in $\Mcx$. And the top row gives the required $\Mbe$-triangle.

(7) $\Rightarrow$ (6) There exists an $\Mbe$-triangle $X'\>H\>X\dashrightarrow$ in $\xi$ with $X\in\Mcx$ and $H\in\Mch$, since $\Mch$ is a $\Ext$-injective $\xi$-cogenerator of $\Mcx$. It follows from Lemma \ref{Thm3.2} and $\rm(ET4)$ that we have the following commutative diagram 
\[
		\xymatrix{
			K\ar@{=}[d] \ar[r] & X' \ar[r]\ar[d] & M \ar[d] \ar@{-->}[r] &  \\
			K\ar[r] & H\ar[d] \ar[r] & W\ar[d] \ar@{-->}[r]& \\
			& X\ar@{=}[r]\ar@{-->}[d]&X\ar@{-->}[d] \\
			& & &  }
\] 
where all rows and columns are  $\mathbb{E}$-triangles in $\xi$. Since $\resdim{\Mch}{K}\leq n-1$, there is $\resdim{\Mch}{W}\leq n$ by definition. Thus, the third column gives the required $\Mbe$-triangle.

(1) $\Leftrightarrow$ (8) It follows from Proposition \ref{Prop4.5} and Theorem \ref{Thm4.6}.

(1) $\Leftrightarrow$ (6) $\Leftrightarrow$ (9) $\Leftrightarrow$ (10) It follows from Theorem \ref{Thm4.6}.
\end{proof}

\section{Gorenstein quasi-resolving subcategories}
\quad~ In this section, we will construct a new quasi-resolving subcategory from a given quasi-resolving subcategory, which generalizes the notion of $\xi$-Gorenstein projective objects given by Hu, Zhang and Zhou in \cite{JDP}. By applying the previous results to this subcategory, we obtain some known results.   

We assume that $\Mcx$ is always a  quasi-resolving subcategory throughout this section.

\begin{defn}
	A complete $\mathcal{P_{\Mcx}}(\xi)$-resolution is a $\Mcc(-,\Mcx)$-exact complex in $\Mcc$
\[
\mathbf{P}:\cdots \> P_2\>P_1\>P_0\>P_{-1}\>P_{-2}\>\cdots
\]
where $P_n$ is in $\Px$ for each integer $n$.  In this case, there exists a $\Mcc(-,\Mcx)$-exact $\mathbb{E}$-triangle  $K_{n+1}\stackrel{g_n}\longrightarrow X_n\stackrel{f_n}\longrightarrow K_n\stackrel{\delta_n}\dashrightarrow$ in $\xi$ which is the $\xi$-resolution $\mathbb{E}$-triangle of $\mathbf{P}$ for any integer $n$. Then the objects $K_n$ are called $\mathcal{GQ_{\Mcx}(\xi)}$-projective for each integer $n$. The subcategory $\Gpx$ of all $\mathcal{GQ_{\Mcx}(\xi)}$-projective objects in $\Mcc$ is called Gorenstein quasi-resolving.
\end{defn}

\begin{rem}
	(1)  Obviously, $\Px$ is a  $\xi$-cogenerator of $\Gpx$ with  is closed under $\xi$-extensions and $\xi$-cocones.
	
	(2) If $\Mcx=\P$, then $\Gpx$ is just the subcategory $\GP$ consisting of all $\xi$-Gorenstein projective objects in $\Mcc$ (see \cite[Definition 4.8]{JDP}).
\end{rem}

\begin{prop}\label{Prop5.3}
	An object $M$ is in $\Px$ if and only if $M\in\Mcx\cap \Gpx$.
\end{prop}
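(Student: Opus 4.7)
The plan is to prove both implications by reducing them to a splitting argument on a single $\Mbe$-triangle.

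For the forward direction, assume $M\in\Px$. Then $M\in\Mcx$ is automatic, and I would exhibit a complete $\Px$-resolution realizing $M$ as a syzygy by the trivial construction $P_0=P_{-1}=M$, $d_0=\mathrm{id}_M$, and $P_n=0$ otherwise. All resulting $\xi$-resolution $\Mbe$-triangles are either zero or split (namely $0\to M\to M\dashrightarrow$ and $M\to M\to 0\dashrightarrow$), hence lie in $\xi$ because $\Delta_0\subseteq\xi$ by Definition \ref{ZL}(1); moreover split triangles are trivially $\Mcc(-,\Mcx)$-exact because every additive functor preserves split exactness. So the syzygy $K_0=M$ witnesses $M\in\Gpx$.

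For the converse, assume $M\in\Mcx\cap\Gpx$. Extracting the $\xi$-resolution $\Mbe$-triangle at index $0$ of a complete $\Px$-resolution with $K_0=M$, I obtain an $\Mcc(-,\Mcx)$-exact $\Mbe$-triangle $M\to P\to K\dashrightarrow$ in $\xi$ with $P\in\Px$. Since $M\in\Mcx$, applying $\Mcc(-,M)$ yields an exact sequence $0\to\Mcc(K,M)\to\Mcc(P,M)\to\Mcc(M,M)\to 0$, so $\mathrm{id}_M$ lifts to a morphism $P\to M$ that splits the triangle. Hence $M$ is a direct summand of $P\in\P$. A routine argument (using that $\Mcc(M,-)$ is a direct summand of the exact functor $\Mcc(P,-)$ restricted to $\Mbe$-triangles in $\xi$) shows $\P$ is closed under direct summands, so $M\in\P$, and therefore $M\in\P\cap\Mcx=\Px$.

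The only delicate point is the forward direction's verification that the trivial doubly-infinite complex really qualifies as a complete $\Px$-resolution in the sense of the definition; once one observes that the split $\Mbe$-triangles lie in $\xi$ and are preserved by every $\Mcc(-,X)$, the argument is essentially formal, and neither direction presents a serious technical obstacle.
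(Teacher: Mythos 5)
Your proof is correct and takes essentially the same route as the paper's: the forward direction is handled (as the paper merely calls it ``obvious'') by the trivial split complete $\Px$-resolution, and the converse extracts the $\Mcc(-,\Mcx)$-exact $\Mbe$-triangle $M\to P\to K\dashrightarrow$ from the definition of $\Gpx$, uses $M\in\Mcx$ to split it, and concludes $M$ is a direct summand of $P\in\P$, hence lies in $\P\cap\Mcx=\Px$. Your added remark that $\P$ must be closed under direct summands is a point the paper leaves implicit; it is indeed a routine fact, so nothing is missing.
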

\begin{proof}
The ``only if'' part is obvious. For the ``if'' part, let $M\in\Gpx$, then there is an $\Mbe$-triangle 
\[
	M\> P \> K\dashrightarrow 
\]
in $\xi$ with $P\in\Px$ and $K\in\Gpx$, which is $\Mcc(-,\Mcx)$-exact  by definition. Note that $M\in\Mcx$, then we have the following exact sequence
\[
0\>\Mcc(K,M)\>\Mcc(P,M)\>\Mcc(M,M)\>0.
\]
Hence the $\Mbe$-triangle $M\> P \> K\dashrightarrow$ is split. So we obtain that  $M\oplus K\cong P\in\P$, and it implies that $M\in\P$. Then $M\in\P\cap\Mcx=\Px$.
\end{proof}

\begin{cor}
 $\Mcx=\Px$ if and only if $\Mcx\subseteq \Gpx$. 
\end{cor}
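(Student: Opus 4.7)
\medskip
\noindent\textbf{Proof proposal.} The strategy is to reduce the corollary to Proposition \ref{Prop5.3} together with the observation that every object in $\Px$ already belongs to $\Gpx$. Once this inclusion $\Px\subseteq\Gpx$ is in hand, both implications fall out immediately from $\Px=\Mcx\cap\P$.

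For the forward direction, assume $\Mcx=\Px$ and take any $P\in\Px$. I would exhibit a complete $\Px$-resolution of $P$ by forming the complex
\[
\cdots\> 0\> 0\> P\stackrel{\mathrm{id}}{\>}P\> 0\> 0\>\cdots,
\]
with $P$ at positions $0$ and $-1$. All its $\xi$-resolution $\Mbe$-triangles are either the zero triangle or the split triangle $0\to P\to P\dashrightarrow$ (equivalently $P\to P\to 0\dashrightarrow$), which lie in $\Delta_0\subseteq\xi$. Since split triangles induce split short exact sequences after applying $\Mcc(-,X)$ for any $X\in\Mcx$, the complex is $\Mcc(-,\Mcx)$-exact. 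Hence $P\in\Gpx$, and so $\Mcx=\Px\subseteq\Gpx$.

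For the converse, assume $\Mcx\subseteq\Gpx$. Then for any $M\in\Mcx$ we have $M\in\Mcx\cap\Gpx$, and Proposition \ref{Prop5.3} forces $M\in\Px$. This shows $\Mcx\subseteq\Px$, while the reverse inclusion $\Px=\Mcx\cap\P\subseteq\Mcx$ is immediate. Therefore $\Mcx=\Px$.

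The only non-routine point is verifying the inclusion $\Px\subseteq\Gpx$, and even that reduces to checking that the trivial two-term complex above is a complete $\Px$-resolution; the $\Mcc(-,\Mcx)$-exactness of its (split) resolution $\Mbe$-triangles is the one thing to make explicit, and after that the rest of the argument is pure bookkeeping with Proposition \ref{Prop5.3}.
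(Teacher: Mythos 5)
Your proof is correct and follows essentially the same route the paper intends: both directions reduce to Proposition \ref{Prop5.3} together with the trivial inclusion $\Px\subseteq\Mcx$. The only difference is that you re-derive $\Px\subseteq\Gpx$ from scratch via the trivial complete resolution $\cdots\to 0\to P\xrightarrow{\mathrm{id}}P\to 0\to\cdots$, whereas this is already the content of the ``only if'' direction of Proposition \ref{Prop5.3} (which the paper labels obvious), so you could have cited it directly.
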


\begin{lem}\label{HHH}
	Let $A\stackrel{x}\longrightarrow B \stackrel{y}\longrightarrow C  \stackrel{\delta}\dashrightarrow$  be a $\mathbb{E}$-triangle in $\xi$.
	
(1) If $C\in \Gpx$, then the $\mathbb{E}$-triangle $A\stackrel{x}\longrightarrow B \stackrel{y}\longrightarrow C  \stackrel{\delta}\dashrightarrow$ is $\Mcc(-,\Mcx)$-exact.
	
(2) If $C$ is a direct summand of $\mathcal{GQ_{\Mcx}(\xi)}$-projective object, then the $\mathbb{E}$-triangle $A\stackrel{x}\longrightarrow B \stackrel{y}\longrightarrow C  \stackrel{\delta}\dashrightarrow$ is $\Mcc(-,\Mcx)$-exact.
	\end{lem}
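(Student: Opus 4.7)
The plan is to establish (1) via the vanishing $\ext^{n\geq 1}(C,X)=0$ for every $X\in\Mcx$ whenever $C\in\Gpx$, and then to derive (2) as a direct-sum corollary of (1).

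For the vanishing step, I unpack the definition of $\Gpx$. Since $C\in\Gpx$, there exists a complete $\Px$-resolution $\mathbf{P}:\cdots\to P_1\to P_0\to P_{-1}\to\cdots$ one of whose syzygies equals $C$, so I may arrange $C=K_0$ and read off the $\xi$-projective resolution $\cdots\to P_1\to P_0\to C\to 0$ as the left half. By definition every $\xi$-resolution $\Mbe$-triangle $K_{n+1}\to P_n\to K_n\dashrightarrow$ is $\Mcc(-,\Mcx)$-exact, so for any $X\in\Mcx$ the induced sequence $0\to\Mcc(K_n,X)\to\Mcc(P_n,X)\to\Mcc(K_{n+1},X)\to 0$ is exact. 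Splicing these short exact sequences via the factorization $d_n=g_{n-1}f_n$ of the differentials through the syzygies shows that $\Mcc(\mathbf{P}_{\geq 0},X)$ is exact in every positive degree, hence $\ext^n(C,X)=H^n(\Mcc(\mathbf{P}_{\geq 0},X))=0$ for all $n\geq 1$.

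To complete (1), I use a cobase-change argument. Given $\alpha\in\Mcc(A,X)$, form the cobase change $\alpha_*\delta\in\Mbe(C,X)$. Since $\xi$ is closed under cobase change, the resulting $\Mbe$-triangle $X\to B'\to C\stackrel{\alpha_*\delta}\dashrightarrow$ lies in $\xi$ and fits into a morphism of triangles realizing $(\alpha,1_C)$. Under the standard identification $\Mbe_\xi(C,X)\cong\ext^1(C,X)$ (coming from Lemma \ref{Thm3.2} together with \cite{JDP,JZP}), the vanishing proven above forces $\alpha_*\delta=0$, so the new triangle splits. A retraction $r:B'\to X$ composed with the middle morphism $\beta:B\to B'$ produces $r\beta:B\to X$ extending $\alpha$, giving surjectivity of $\Mcc(B,X)\to\Mcc(A,X)$. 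Combined with the automatic left-exactness of $0\to\Mcc(C,X)\to\Mcc(B,X)\to\Mcc(A,X)$ from the original $\Mbe$-triangle, this yields the $\Mcc(-,\Mcx)$-exactness required in (1).

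For (2), write $C$ as a direct summand of some $D\in\Gpx$ with $D\cong C\oplus C'$. I take the direct sum of the given $\Mbe$-triangle with the split $\Mbe$-triangle $0\to C'\to C'\dashrightarrow$; since $\xi$ is closed under finite coproducts and under isomorphisms, the result is an $\Mbe$-triangle $A\to B\oplus C'\to D\dashrightarrow$ in $\xi$ to which part (1) applies. Restricting any lift $\gamma:B\oplus C'\to X$ of $\alpha\in\Mcc(A,X)$ to the $B$-summand then gives the desired extension of $\alpha$ to $B$. The main obstacle I anticipate is the splicing step in the vanishing proof: one must check carefully that the $\Mcc(-,\Mcx)$-exact short pieces genuinely fit into a complex that is exact in every positive degree, which uses in an essential way that each inflation $K_{n+1}\to P_n$ induces an injection on $\Mcc(-,X)$ --- a consequence of the $\Mbe$-triangle structure and Lemma \ref{LZHL}.
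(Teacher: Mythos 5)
Your approach to part (1) is genuinely different from the paper's. You first establish the vanishing $\ext^{n\geq 1}(C,X)=0$ for all $X\in\Mcx$ directly from the complete $\Px$-resolution of $C$ by a splicing/dimension-shifting computation, and then deduce the lifting property via a cobase-change argument. The paper instead works in the opposite order: it pulls the $\Mcc(-,\Mcx)$-exact resolution $\Mbe$-triangle $K\to P\to C\dashrightarrow$ back along $y:B\to C$ using Lemma \ref{BH}(1), uses $\xi$-projectivity of $P$ to see that $g:P\to C$ factors through $y$ (so $g^*\delta=0$ and the pulled-back row splits), and then invokes \cite[Lemma 4.10(1)]{JDP} to propagate $\Mcc(-,\Mcx)$-exactness back to the original triangle. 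Your route has the virtue of not leaning on that external lemma from \cite{JDP}, and would in fact also give Lemma \ref{TTTT} for free; the paper's route avoids having to discuss $\ext^0$ and the comparison map at all.

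There is, however, a genuine gap in your argument. You write that the ``automatic left-exactness of $0\to\Mcc(C,X)\to\Mcc(B,X)\to\Mcc(A,X)$'' follows from the original $\Mbe$-triangle. This is not automatic in an extriangulated category: for an $\Mbe$-triangle $A\stackrel{x}\to B\stackrel{y}\to C\dashrightarrow$ and any object $X$, the sequence $\Mcc(C,X)\stackrel{y^*}\to\Mcc(B,X)\stackrel{x^*}\to\Mcc(A,X)$ is exact at $\Mcc(B,X)$ (this is \cite[Proposition~3.3]{HY}), but $y^*$ need not be injective --- that already fails for triangulated categories, which are extriangulated. The $\Mcc(-,\Mcx)$-exactness you must produce requires this injectivity. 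It can be repaired, and in fact by the very observation the paper's proof relies on: take the $\Mcc(-,\Mcx)$-exact resolution $\Mbe$-triangle $K\to P\stackrel{g}\to C\dashrightarrow$ with $P\in\Px\subseteq\P$; since $P$ is $\xi$-projective and $y$ is a $\xi$-deflation, $g$ factors as $g=yq$ for some $q:P\to B$; then $g^*=q^*y^*$ and $g^*:\Mcc(C,X)\to\Mcc(P,X)$ is injective by hypothesis, so $y^*$ is injective. (Alternatively, your splicing computation already shows $\Mcc(C,X)\cong\ext^0(C,X)$ via the comparison map $\varphi$ of Lemma \ref{LZHL}, and the long exact sequence then gives injectivity of $\ext^0(C,X)\to\ext^0(B,X)$, hence of $y^*$.) The same omission propagates to part (2): restricting a lift $\gamma:B\oplus C'\to X$ gives surjectivity of $x^*$, but you must also extract injectivity of $y^*:\Mcc(C,X)\to\Mcc(B,X)$ from the injectivity of the corresponding map for the summed triangle, which is easy but should be said.
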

	
	\begin{proof}
(1) Since  $C$ is in $\Gpx$, there is a $\Mcc(-,\Mcx)$-exact $\mathbb{E}$-triangle  $K \stackrel{f}\longrightarrow P \stackrel{g}\longrightarrow C  \stackrel{\theta}\dashrightarrow$ in $\xi$ with $P\in\Px$ and $K\in \Gpx$. Hence there exists a commutative diagram 
	\[
	\xymatrix{
		&K\ar@{=}[r]\ar[d]^{f_1}&K\ar[d]^f\\
		A\ar[r]^{x_1}\ar@{=}[d]&M\ar[r]^{y_1}\ar[d]^{g_1}&P\ar[d]^g\ar@{-->}[r]^{g^*\delta}&\\
		A\ar[r]^x&B\ar[r]^y\ar@{-->}[d]^{y^*\theta}&C\ar@{-->}[r]^{\delta}\ar@{-->}[d]^{\theta}&\\
		&&&
	}
	\]	
made of $\mathbb{E}$-triangles by Lemma \ref{BH} (1). Note that $P$ is $\xi$-projective, so $g$ factors through $y$. Then  the $\mathbb{E}$-triangle $A\stackrel{x_1}\longrightarrow M\stackrel{y_1}\longrightarrow P \stackrel{g^*\delta}\dashrightarrow $ is split by \cite[Corollary 3.5]{HY}, hence it is a $\Mcc(-,\Mcx)$-exact $\mathbb{E}$-triangle in $\xi$. Therefore, the $\mathbb{E}$-triangle 
	$A\stackrel{x}\longrightarrow B \stackrel{y}\longrightarrow C  \stackrel{\delta}\dashrightarrow$
	 is $\Mcc(-,\Mcx)$-exact by \citep[Lemma 4.10 (1)]{JDP}.
	
(2) Suppose that $C\oplus C'$ is a $\mathcal{GQ_{\Mcx}(\xi)}$-projective object, then we have the following commutative diagram 
	 \[
	 \xymatrix{
		 &C'\ar@{=}[r]\ar[d]^{f_1}&C'\ar[d]^{\begin{tiny}\begin{bmatrix}
			0\\1
			\end{bmatrix}\end{tiny}}\\
		 A\ar[r]^{x_1}\ar@{=}[d]&M\ar[r]^{y_1}\ar[d]^{g_1}&C\oplus C'\ar[d]^{\begin{tiny}\begin{bmatrix}
			1&0
			\end{bmatrix}\end{tiny}}\ar@{-->}[r]^{\qquad\begin{tiny}\begin{bmatrix}
				1&0
				\end{bmatrix}\end{tiny}^*\delta}&\\
		 A\ar[r]^x&B\ar[r]^y\ar@{-->}[d]^{y^*\theta}&C\ar@{-->}[r]^{\delta}\ar@{-->}[d]^{\theta}&\\
		 &&&
	 }
\]	
made of $\mathbb{E}$-triangles in $\xi$ by Lemma \ref{BH} (1) and Lemma \ref{Thm3.2}. Note that the second horizontal is $\Mcc(-,\Mcx)$-exact by (1) and the third vertical  is  $\Mcc(-,\Mcx)$-exact, then so is the third horizontal by \citep[Lemma 4.10 (1)]{JDP}.
\end{proof}

In the following part, we give some characterizations of $\mathcal{GQ_{\Mcx}(\xi)}$-projective objects.

\begin{lem}\label{TTTT}
	Assume that $G$ is an object  in $\Gpx$, then $\ext^0(G,X) \simeq \Mcc(G,X)$ and $\ext^{i\geq 1}(G,X)=0$ for any  $X\in\Mcx$. 
\end{lem}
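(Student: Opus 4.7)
The plan is to use a complete $\Px$-resolution of $G$ directly as the vehicle for computing $\ext^{\ast}(G,X)$, so that both assertions reduce to the $\Mcc(-,\Mcx)$-exactness built into such a resolution.

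Since $G\in\Gpx$, I may write $G=K_0$ inside a complete $\Px$-resolution
\[
\mathbf{P}:\cdots\>P_1\>P_0\>P_{-1}\>P_{-2}\>\cdots
\]
whose $\xi$-resolution $\Mbe$-triangles $K_{n+1}\stackrel{g_n}\>P_n\stackrel{f_n}\>K_n\dashrightarrow$ lie in $\xi$ and are $\Mcc(-,\Mcx)$-exact. Then the non-negative truncation $\cdots\>P_1\>P_0\stackrel{f_0}\>G\>0$ is a $\xi$-projective resolution of $G$, hence by definition
\[
\ext^n(G,X)=H^n\bigl(\Mcc(P_0,X)\>\Mcc(P_1,X)\>\cdots\bigr).
\]

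Because $\mathbf{P}$ is $\Mcc(-,\Mcx)$-exact and $X\in\Mcx$, the entire cochain complex $\Mcc(\mathbf{P},X)$ is exact in every degree. Truncating off the negative part leaves cohomology in positions $n\geq 1$ unchanged, so $\ext^n(G,X)=0$ for every $n\geq 1$. At degree zero, exactness of the full complex at $\Mcc(P_0,X)$ yields
\[
\ext^0(G,X)=\ker\bigl(\Mcc(P_0,X)\>\Mcc(P_1,X)\bigr)=\mathrm{im}\bigl(\Mcc(P_{-1},X)\>\Mcc(P_0,X)\bigr).
\]
The differential $d_0=g_{-1}f_0$ factors through $G$; using the $\Mcc(-,\Mcx)$-exact $\Mbe$-triangles $G\stackrel{g_{-1}}\>P_{-1}\>K_{-1}\dashrightarrow$ and $K_1\>P_0\stackrel{f_0}\>G\dashrightarrow$, the pullback $g_{-1}^{\ast}:\Mcc(P_{-1},X)\>\Mcc(G,X)$ is surjective and $f_0^{\ast}:\Mcc(G,X)\>\Mcc(P_0,X)$ is injective, so the image above is canonically isomorphic to $\Mcc(G,X)$. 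Unpacking the definition of $\varphi$ in Lemma \ref{LZHL} shows that it is also induced by the augmentation $f_0$, so this canonical identification is precisely $\varphi$.

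The only potentially delicate point is the last identification of the degree-zero piece with $\Mcc(G,X)$ via $\varphi$; this is purely bookkeeping and becomes immediate once $\varphi$ is unfolded through $f_0$. There is no deeper technical obstacle, since everything else is dictated by the defining $\Mcc(-,\Mcx)$-exactness of a complete $\Px$-resolution together with the standard Hom computation of $\ext^\ast$.
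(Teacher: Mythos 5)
Your proof is correct, but it takes a genuinely different route from the paper's. The paper works with a single $\Mcc(-,\Mcx)$-exact $\Mbe$-triangle $G'\> P\> G\dashrightarrow$ (with $P\in\Px$, $G'\in\Gpx$), compares the resulting short exact sequence of $\Mcc(-,X)$-groups with the long exact $\ext$-sequence from Lemma~\ref{LZHL} via the natural transformation $\varphi$, and then runs a Snake Lemma argument to conclude $\varphi_1$ is an isomorphism and $\ext^1(G,X)=0$, with higher vanishing by dimension shifting. You instead observe that the non-negative truncation of a complete $\Px$-resolution of $G$ is already a $\xi$-projective resolution (because $\Px\subseteq\P$), so $\ext^n(G,X)$ can be read off directly from $\Mcc(\mathbf{P},X)$; the built-in $\Mcc(-,\Mcx)$-exactness then kills all cohomology in degrees $\geq 1$ at once and identifies the degree-zero piece with $\Mcc(G,X)$ by factoring $d_0=g_{-1}f_0$ through $G$. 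Your computation is more uniform (no separate dimension shift for $i>1$) and, notably, does not rely on the paper's unargued assertion that $\varphi_1$ and $\varphi_3$ are monic — the injectivity you need (of $f_0^{\ast}$) is extracted directly from the $\Mcc(-,\Mcx)$-exactness of the chosen resolution triangle, which is a cleaner bookkeeping step. Both arguments are sound; yours trades the paper's triangle-by-triangle diagram chase for a single global cochain-complex computation.
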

\begin{proof}
Since $G\in\Gpx$, there exists an $\Mbe$-triangle $G'\> P \> G  \dashrightarrow$ in $\xi$ with $G'\in\Gpx$ and $P\in\Px$. If $X\in\Mcx$, then we have the following commutative diagram 
\[
	\xymatrix{
		0\ar[r]&\Mcc(G,X)\ar[r]\ar[d]_{\varphi_1}&\Mcc(P,X)\ar[r]\ar[d]^{\varphi_2}_{\simeq }&\Mcc(G',X)\ar[r]\ar[d]^{\varphi_3}& 0&\\
	0\ar[r]&\ext^0(G,X)\ar[r]&\ext^0(P,X)\ar[r]&\ext^0(G',X)\ar[r]&\ext^1(G,X)\ar[r]&0.
	}
\]
where the top exact sequence follows from Lemma \ref{HHH}(1). Note that  $\varphi_1$ and  $\varphi_3$ are monic, hence $\varphi_1$ is  epic by Snake Lemma, so $\varphi_1$ is an isomorphism. Similarly, one can get that $\varphi_3$ is an isomorphism, so $\ext^1(G,X)=0$. It is easy to show that $\ext^i(G, M)=0$ for any $i\geq 1$ by Lemma \ref{LZHL}.
\end{proof}

\begin{rem}
	$\Px$ is a $\Ext$-injective $\xi$-cogenerator of $\Gpx$. In fact, we have that $\ext^0(P,P')\cong \Mcc(P,P')$ for any $P,P'\in\Px$ since $\Px\subseteq \P$, and $\Gpx\perp\Px$ by Lemma \ref{TTTT}.
\end{rem}

\begin{lem}\label{Dec30}
	(1) If $A\stackrel{f}\longrightarrow B \stackrel{f'}\longrightarrow C  \stackrel{\delta}\dashrightarrow$ and $B\stackrel{g}\longrightarrow D\stackrel{g'}\longrightarrow E  \stackrel{\delta'}\dashrightarrow$ are both $\Mcc(-,\Mcx)$-exact   $\mathbb{E}$-triangles in $\xi$, then we have the following commutative diagram:
\[
		\xymatrix{
			A\ar@{=}[d] \ar[r]^{f} & B  \ar[r]^{f'}\ar[d]^{g} & C \ar[d]^d \ar@{-->}[r]^{\delta} &  \\
			A\ar[r]^{h} & D\ar[d]^{g'} \ar[r]^{h'} & F\ar[d]^e \ar@{-->}[r]^{\delta''} & \\
			& E\ar@{=}[r]\ar@{-->}[d]^{\delta'}&E\ar@{-->}[d]^{f'_*\delta'} \\
			& & &  }
	\] 
where all rows and columns are both $\Mcc(-,\Mcx)$-exact  $\mathbb{E}$-triangles in $\xi$.

(2) If $A\stackrel{f}\longrightarrow B \stackrel{f'}\longrightarrow C  \stackrel{\delta}\dashrightarrow$ and $D\stackrel{g}\longrightarrow C\stackrel{g'}\longrightarrow E  \stackrel{\delta'}\dashrightarrow$ are both $\Mcc(-,\Mcx)$-exact   $\mathbb{E}$-triangles in $\xi$, then we have the following commutative diagram:
\[
		\xymatrix{
			A\ar@{=}[d] \ar[r]^{d} & F  \ar[r]^{e}\ar[d]^{h} & D \ar[d]^{g} \ar@{-->}[r]^{g^*\delta} &  \\
			A\ar[r]^f & B\ar[d]^{h'} \ar[r]^{f'} & C\ar[d]^{g'} \ar@{-->}[r]^{\delta} & \\
			& E\ar@{=}[r]\ar@{-->}[d]^{\delta''}&E\ar@{-->}[d]^{\delta'} \\
			& & &  }
	\] 
where all rows and columns are both $\Mcc(-,\Mcx)$-exact  $\mathbb{E}$-triangles in $\xi$.
\end{lem}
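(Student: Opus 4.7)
The plan is to construct the $3\times 3$ commutative diagram via the (ET4) axiom and then promote the two newly produced $\mathbb{E}$-triangles to be $\Mcc(-,\Mcx)$-exact by a short diagram chase (or, equivalently, by invoking \citep[Lemma 4.10(1)]{JDP}, already used in Lemma \ref{HHH}). For part (1), the hypotheses supply a composable pair of $\mathbb{E}$-triangles $A\stackrel{f}{\to}B\stackrel{f'}{\to}C\dashrightarrow$ and $B\stackrel{g}{\to}D\stackrel{g'}{\to}E\dashrightarrow$ in $\xi$, both $\Mcc(-,\Mcx)$-exact. Applying (ET4) produces an object $F$, morphisms $d,e,h,h'$ (satisfying $h=g\circ f$ and $d\circ f'=h'\circ g$), and an $\mathbb{E}$-extension $\delta''$ realized by $A\to D\to F$ such that $C\to F\to E$ realizes $f'_*\delta'$. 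Since $\xi$ is a proper class, Lemma \ref{Thm3.2} lets me apply (ET4) inside the extriangulated category $(\Mcc,\mathbb{E}_\xi,\mathfrak{s}_\xi)$, which automatically places both new $\mathbb{E}$-triangles $A\to D\to F\dashrightarrow$ and $C\to F\to E\dashrightarrow$ in $\xi$ (the latter also by closure under cobase change).

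The second step is to verify $\Mcc(-,\Mcx)$-exactness of the two new triangles. I would fix $X\in\Mcx$ and apply $\Mcc(-,X)$ to the whole diagram: every conflation in $\xi$ yields a left-exact Hom-sequence, and by hypothesis the top row and left column even yield short exact ones. For the middle row $A\to D\to F$, any $\alpha\colon A\to X$ lifts through $f$ to some $\beta\colon B\to X$, which in turn lifts through $g$ to some $\gamma\colon D\to X$; then $\gamma\circ h=\gamma\circ g\circ f=\beta\circ f=\alpha$. For the right column $C\to F\to E$, given $\alpha\colon C\to X$, lift $\alpha\circ f'\colon B\to X$ through $g$ to some $\gamma\colon D\to X$. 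Because $f'\circ f=0$, we have $\gamma\circ h=(\alpha\circ f')\circ f=0$, so $\gamma$ descends along $h'$ to some $\tilde\alpha\colon F\to X$; then $\tilde\alpha\circ d\circ f'=\tilde\alpha\circ h'\circ g=\gamma\circ g=\alpha\circ f'$, and the fact that the deflation $f'$ is an epimorphism (a standard consequence of the left-exactness of $\Mcc(-,X)$ on conflations) forces $\tilde\alpha\circ d=\alpha$.

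Part (2) follows from the same strategy with all arrows reversed: apply $(\mathrm{ET4})^{\mathrm{op}}$ inside $(\Mcc,\mathbb{E}_\xi,\mathfrak{s}_\xi)$ to the pair $A\to B\to C\dashrightarrow$ and $D\to C\to E\dashrightarrow$ (composable at $C$) to obtain the mirrored $3\times 3$ diagram with all triangles in $\xi$, and run the dual diagram chase. The step I expect to be the only non-routine piece is the surjectivity check on the right column of part (1): it is the only point that genuinely combines both hypotheses simultaneously, and it relies on the commutativity $d\circ f'=h'\circ g$ coming out of (ET4) together with the fact that deflations are epic in $\Mcc$.
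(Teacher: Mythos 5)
Your construction of the $3\times 3$ diagram via $\rm(ET4)$ inside $(\Mcc,\mathbb{E}_\xi,\mathfrak{s}_\xi)$ is sound, and your chase establishing the surjectivity of $\Mcc(h,X)$ and of $\Mcc(d,X)$ is essentially the same calculation the paper encodes in its diagram of abelian groups. But there is a genuine gap: you never verify the \emph{left} end of the two new Hom-sequences. By definition, a $\Mcc(-,\Mcx)$-exact $\mathbb{E}$-triangle must induce a \emph{short} exact sequence $0\to\Mcc(F,X)\to\Mcc(D,X)\to\Mcc(A,X)\to 0$ for the middle row and $0\to\Mcc(E,X)\to\Mcc(F,X)\to\Mcc(C,X)\to 0$ for the right column, so you owe a proof that $\Mcc(h',X)$ and $\Mcc(e,X)$ are injective. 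This is not automatic: your parenthetical claim that ``every conflation in $\xi$ yields a left-exact Hom-sequence'' (and that deflations are epic) is true in exact categories but false in extriangulated ones — for a general $\mathbb{E}$-triangle $A\to B\stackrel{y}{\to}C\dashrightarrow$, the sequence $\Mcc(C,X)\to\Mcc(B,X)\to\Mcc(A,X)\to\mathbb{E}(C,X)\to\cdots$ is only exact in the middle, and $\Mcc(y,X)$ need not be injective (think of triangulated categories). The place you use this — concluding $\tilde\alpha d=\alpha$ from $\tilde\alpha d f'=\alpha f'$ — only works because the top row is \emph{by hypothesis} $\Mcc(-,\Mcx)$-exact, which is a special property, not left-exactness in general. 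The paper handles the same point by citing \citep[Lemma~3(2)]{JDPPR} for the monicity of $\Mcc(h',X)$ and then running the $3\times3$-Lemma to get the right column.

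The gap is fillable with a short chase you omitted. For $\Mcc(e,X)$: if $w\colon E\to X$ satisfies $we=0$, then $wg'=weh'=0$, so $w=0$ by injectivity of $\Mcc(g',X)$ (the hypothesis on $B\to D\to E$). For $\Mcc(h',X)$: if $uh'=0$ then $udf'=uh'g=0$, so $ud=0$ by injectivity of $\Mcc(f',X)$ (the hypothesis on $A\to B\to C$); then exactness of $\Mcc(E,X)\to\Mcc(F,X)\to\Mcc(C,X)$ at the middle gives $u=ve$ for some $v\colon E\to X$, and $vg'=veh'=uh'=0$ forces $v=0$ and hence $u=0$. With these two paragraphs added — and the ``left-exactness'' remark corrected to invoke the hypotheses rather than a false general principle — your argument becomes complete and is an honest alternative to the paper's route through the $3\times3$-Lemma.
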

\begin{proof}
(1) It follows from Lemma \ref{Thm3.2} and $\rm(ET4)$ that we have the desired commutative diagram where all  rows and columns are both  $\mathbb{E}$-triangles in $\xi$. We fix $X\in\Mcx$. Applying $\mathcal{C}(-,X)$ to the  diagram, one obtains the following commutative diagram of abelian groups:
\[
	\xymatrix{
		&0\ar@{..>}[d]&0\ar[d]&0\ar[d]\\
		0\ar[r]&\mathcal{C}(E,X)\ar[d]\ar@{=}[r]&\mathcal{C}(E,X)\ar[d]\ar[r] & 0\ar[d]\ar[r]&0\\
		0\ar@{..>}[r]&\mathcal{C}(F,X) \ar[d]\ar[r]^{\mathcal{C}(h',X)} & \mathcal{C}(D,X)  \ar[r]^{\mathcal{C}(h,X)}\ar[d]_{\mathcal{C}(g,X)}& \mathcal{C}(A,X) \ar@{=}[d] \ar@{..>}[r]&0&(*)  \\
		0\ar[r]&\mathcal{C}(C,X)\ar@{..>}[d]\ar[r] & \mathcal{C}(B,X)\ar[d] \ar[r]^{\mathcal{C}(f,X)} & \mathcal{C}(A,X) \ar[d] \ar[r]&0\\
		&0&0&0
			}
\]
Note that $\Mcc(h,X)$ is an epimorphism since $ \Mcc(h,X)=\Mcc(f,X)\Mcc(g,X)$. And it is easy to check that $\Mcc(h',X)$ is a monomorphism by \citep[Lemma 3(2)]{JDPPR}, so the middle row of $(*)$ is exact. Thus, the  first column of $(*)$  is also exact by $3\times 3$-Lemma.

(2) It is similar to the proof of (1).
\end{proof}

The following theorem is important in this section.

\begin{thm}\label{Thm5.8}
$\Gpx$ is a quasi-resolving subcategory of $\Mcc$.
\end{thm}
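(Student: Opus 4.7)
The plan is to verify the three defining conditions of a quasi-resolving subcategory for $\Gpx$, in the order: $\Gpx\subseteq\res{\Gpx\cap\P}$, closure under $\xi$-extensions, and closure under $\xi$-cocones.

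For the inclusion $\Gpx\subseteq\res{\Gpx\cap\P}$, first observe $\Px\subseteq\Gpx$: for $P\in\Px$ the complex $\cdots\to 0\to P\xrightarrow{1_P} P\to 0\to\cdots$ is a complete $\Px$-resolution, since its $\xi$-resolution $\Mbe$-triangles are all either $0\to 0\to 0$, $0\to P\to P$, or $P\to P\to 0$, hence split and automatically $\Mcc(-,\Mcx)$-exact, with syzygies $P$ and $0$. Thus $\Gpx\cap\P\supseteq\Px$, and the right half of any complete $\Px$-resolution of $G\in\Gpx$ is a $\xi$-exact resolution of $G$ by objects of $\Px\subseteq\Gpx\cap\P$.

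For closure under $\xi$-extensions, let $A\to B\to C\dashrightarrow$ be an $\Mbe$-triangle in $\xi$ with $A,C\in\Gpx$. Lemma \ref{HHH}(1) makes the triangle $\Mcc(-,\Mcx)$-exact. The plan is to splice complete $\Px$-resolutions $\mathbf{P}^A$ and $\mathbf{P}^C$ into a complete $\Px$-resolution $\mathbf{P}^B$ with $n$-th term $P^A_n\oplus P^C_n$: at each degree the new syzygy $\Mbe$-triangle for $B$ arises as the middle column of a $3\times 3$ diagram produced from the syzygy triangles for $A$ and $C$ via Lemma \ref{BH} and Lemma \ref{Thm3.2}, and Lemma \ref{Dec30} transports $\Mcc(-,\Mcx)$-exactness onto the new triangle, giving $B\in\Gpx$.

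Closure under $\xi$-cocones is the main obstacle. Given $A\to B\to C\dashrightarrow$ in $\xi$ with $B,C\in\Gpx$, the triangle is $\Mcc(-,\Mcx)$-exact by Lemma \ref{HHH}(1), and I will build a complete $\Px$-resolution of $A$ in two halves, using that closure under $\xi$-extensions is already established. For the coresolution half, take the $\Mbe$-triangle $B\to P\to B'\dashrightarrow$ from $B$'s complete resolution (with $P\in\Px$ and $B'\in\Gpx$) and apply $\rm(ET4)$ to $A\to B\to C$ and $B\to P\to B'$; this produces an $\Mbe$-triangle $A\to P\to E\dashrightarrow$ in $\xi$ and an $\Mbe$-triangle $C\to E\to B'\dashrightarrow$ in $\xi$. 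The latter forces $E\in\Gpx$ by the established closure under extensions, and then Lemma \ref{HHH}(1) makes the former $\Mcc(-,\Mcx)$-exact; iterate with $E$ replacing $B$ to extend the coresolution indefinitely. For the resolution half, the long exact sequence of Lemma \ref{LZHL} applied to $A\to B\to C$ together with Lemma \ref{TTTT} for $B,C\in\Gpx$ gives $\ext^{i\geq 1}(A,X)=0$ for every $X\in\Mcx$. A horseshoe-for-kernels argument using the $\xi$-projective resolutions of $B$ and $C$, lifting covers of $C$ through $B$ via $\xi$-projectivity and splicing as in \cite[Lemma 4]{JDPPR}, produces a $\xi$-projective resolution of $A$; the Ext-vanishing translates via dimension shift into $\Mbe(K_n,X)=0$ at every syzygy $K_n$, which is precisely what promotes $\xi$-exactness to $\Mcc(-,\Mcx)$-exactness. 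Splicing the two halves at $A$ yields the required complete $\Px$-resolution.

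The chief technical difficulty is the right (resolution) half of the $\xi$-cocone case, which splits into a horseshoe-for-kernels existence step and an Ext-vanishing step and rests on Lemma \ref{LZHL} and Lemma \ref{TTTT}. The order of verification is essential: closure under $\xi$-extensions must be in hand first, since the cones $E$ arising in the iterative coresolution construction need to stay in $\Gpx$.
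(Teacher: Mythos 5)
Your overall plan matches the paper's: show $\Px\subseteq\Gpx$ (hence $\Gpx\subseteq\res{\mathcal{P}_{\Gpx}}$), then closure under $\xi$-extensions via the horseshoe, then closure under $\xi$-cocones in two halves, with the extension closure established first so that the $\rm(ET4)$ cones in the coresolution half stay in $\Gpx$. Your coresolution half is the same argument the paper uses.

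The real divergence is in the resolution half of the $\xi$-cocone case, and there you have a gap. You correctly derive $\ext^{i\geq 1}(A,X)=0$ for all $X\in\Mcx$ from Lemma \ref{LZHL} and Lemma \ref{TTTT}, build a $\xi$-projective resolution of $A$, and shift dimension to get $\ext^{i\geq 1}(K_n,X)=0$ at every syzygy. But you then assert this gives ``$\Mbe(K_n,X)=0$,'' which is both notationally wrong (what the dimension shift gives is $\ext^1(K_n,X)=0$, not vanishing of the full $\Mbe$-group) and, more importantly, the implication ``$\ext^1(K_n,X)=0$ for all $X\in\Mcx$ $\Rightarrow$ each $\xi$-resolution $\Mbe$-triangle $K_{n+1}\>P_n\>K_n\dashrightarrow$ is $\Mcc(-,X)$-exact'' is left unjustified. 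What that step really needs is the comparison $\ext^1(K_n,X)\cong\Mbe_\xi(K_n,X)$ (so that $\ext^1$-vanishing kills all cobase changes $g_*\delta_n$ and forces $\Mcc(P_n,X)\to\Mcc(K_{n+1},X)$ to be onto). That comparison is true in this setting, but it is not among the lemmas you cite, and the natural map $\Mcc(K_{n+1},X)\to\ext^0(K_{n+1},X)$ of Lemma \ref{LZHL} is only injective for general $K_{n+1}$, so the ``Ext-level'' surjectivity you obtain from the long exact sequence does not by itself descend to the Hom-level surjectivity you need. The paper avoids this altogether: it constructs the left half of the complete $\Px$-resolution of $A$ explicitly --- producing $P_0^A$ from an $\rm(ET4)^{op}$ diagram through $G\in\Gpx$, lifting $f_0^C$ through $y$ by $\xi$-projectivity to build $P_0^B=P_0^A\oplus P_0^C\to B$, and then reading off $\Mcc(-,\Mcx)$-exactness of the column $K_1^A\>P_0^A\>A$ from the $3\times 3$-Lemma, iterating with Lemma \ref{HHH}(2) --- never invoking the $\ext^1\cong\Mbe_\xi$ identification. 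To make your route airtight you would need to either prove that identification or replace the Ext-vanishing step with an explicit diagram argument of the paper's kind.
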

\begin{proof}
Note that $\Px\subseteq \Gpx$ by Proposition \ref{Prop5.3} and  $\Px\subseteq \P$,  we have that $\Px\subseteq \P\cap\Gpx=\mathcal{P_\Gpx}$. Since $\Gpx\subseteq\res{\Px}$, so $\Gpx\subseteq \res{\mathcal{P_\Gpx}}$. 

To prove that $\Gpx$ is quasi-resolving, it suffice to show that if
\[
A\stackrel{x}\> B \stackrel{y}\> C  \stackrel{\delta}\dashrightarrow
\]
is an $\Mbe$-triangle in $\xi$ with $C\in\Gpx$, then $A\in\Gpx$ if and only if $B\in\Gpx$.

If $A$ is in $\Gpx$, then there are two $\Mcc(-,\Mcx)$-exact $\mathbb{E}$-triangles
\[
	A\stackrel{g^A_{-1}}\longrightarrow P^A_{-1} \stackrel{f^A_{-1}}\longrightarrow K^A_0  \stackrel{\delta^A_{-1}}\dashrightarrow \text{\quad and\quad } C\stackrel{g^C_{-1}}\longrightarrow P^C_{-1} \stackrel{f^C_{-1}}\longrightarrow K^C_0  \stackrel{\delta^C_{-1}}\dashrightarrow
\]
in $\xi$ with $P^A_{-1},P^C_{-1}\in\Px$ and $K^A_0,K^C_0\in\Gpx$ by definition. Since $C$ is in $\Gpx$, the $\Mbe$-triangle $A\stackrel{x}\> B \stackrel{y}\> C  \stackrel{\delta}\dashrightarrow$ is $\Mcc(-,\Mcx)$-exact by  Lemma \ref{HHH}(1). So there is the following commutative diagram

\[
	\xymatrix{
	A\ar[r]^{x}\ar[d]_{g_{-1}^A}&B\ar[r]^{y}\ar[d]^{g_{-1}^B}&C\ar@{-->}[r]^{\delta}\ar[d]^{g_{-1}^C}&\\
	P_{-1}^A\ar[r]^{\begin{tiny}\begin{bmatrix}
	1 \\
	0
	\end{bmatrix}\end{tiny}}\ar[d]_{f_{-1}^A}&P^B_{-1}\ar[r]^{\begin{tiny}\begin{bmatrix}
	0&1
	\end{bmatrix}\end{tiny}}\ar[d]^{f_{-1}^B}&P_{-1}^C\ar@{-->}^0[r]\ar[d]^{f_{-1}^C}&\\
	K^A_0\ar[r]^x\ar@{-->}[d]^{\delta_{-1}^A}& K^B_0\ar[r]^y\ar@{-->}[d]^{\delta_{-1}^B}& K^C_0\ar@{-->}[r]^{\delta}\ar@{-->}[d]^{\delta_{-1}^C}&\\
	&&&
	} 
\]
where all rows and columns are $\Mcc(-,\Mcx)$-exact $\mathbb{E}$-triangles in $\xi$ with $P^B_{-1}=:P_{-1}^A\oplus P_{-1}^C$ by \citep[Lemma 5]{JDPPR}. Since $K^A_0,K^C_0$ belong to $\Gpx$, by repeating this process, we can obtain a $\Mcc(-,\Mcx)$-exact complex 
\[
\xymatrix{
	 B\ar[r]&P^B_{-1}\ar[r]&P^B_{-2}\ar[r]&P^B_{-3}\ar[r]&\cdots
}
\]
Similarly, we can obtain a $\Mcc(-,\Mcx)$-exact complex 
\[
\xymatrix{
	\cdots\ar[r]&P^B_2\ar[r]&P^B_1\ar[r]&P^B_0\ar[r]&B.
}
\]
By pasting these  $\Mcc(-,\Mcx)$-exact complexes together, we obtain the follows $\Mcc(-,\Mcx)$-exact complex 
\[
\xymatrix{
	\cdots\ar[r]&P^B_2\ar[r]&P^B_1\ar[r]&P^B_0\ar[r]&P^B_{-1}\ar[r]&P^B_{-2}\ar[r]&P^B_{-3}\ar[r]&\cdots
}
\]
which implies $B$ is in $\Gpx$.

If $B$  is in $\Gpx$,  there is a $\Mcc(-, \Mcx)$-exact $\mathbb{E}$-triangle
\[
	B\stackrel{g^B_{-1}}\longrightarrow P^B_{-1} \stackrel{f^B_{-1}}\longrightarrow K^B_0  \stackrel{\delta^B_{-1}}\dashrightarrow
\]
in $\xi$ with $P_{-1}^B \in \Px$ and $K_{0}^B \in \Gpx$. By Lemma \ref{Dec30}(1), there exists the following commutative diagram
\[
	\xymatrix{
		A\ar@{=}[d] \ar[r]^x & B  \ar[r]^{y}\ar[d]^{g^B_{-1}} & C \ar[d]^{g} \ar@{-->}[r]^{\delta} &  \\
		A\ar[r]^{g^A_{-1}} & P^B_{-1}\ar[d]^{f^B_{-1}} \ar[r]^{f^A_{-1}} & G\ar[d]^{f} \ar@{-->}[r]^{\delta^A_{-1}} &  \\
		& K^B_0\ar@{=}[r]\ar@{-->}[d]^{\delta^B_{-1}}& K^B_0\ar@{-->}[d]^{y_*\delta^B_{-1}} \\
		& & &  } 
\] 
where all rows and columns are both $\Mcc(-,\Mcx)$-exact  $\mathbb{E}$-triangles in $\xi$.  Since $G$ lies in $\Gpx$, there is a $\Mcc(-,\Mcx)$-exact complex
\[
	\xymatrix{
		 G\ar[r]&P^A_{-2}\ar[r]&P^A_{-3}\ar[r]&P^A_{-4}\ar[r]&\cdots
	}
\]
with $P^A_n\in \Px$ for any $n\geq 2$.  Hence we get a $\Mcc(-,\Mcx)$-exact $\xi$-exact complex
\[\xymatrix{
	A\ar[r]& P^B_{-1}\ar[r]&P^A_{-2}\ar[r]&P^A_{-3}\ar[r]&\cdots
	}
\]
with $P_{-1}^B \in \Px$ and $P_{-n}^A \in \Px$ for any $n \geqslant 2$.  Since  $C\in\Gpx$, there exists an  $\mathbb{E}$-triangle $K_1^C\stackrel{g^C_0}\longrightarrow P^C_0 \stackrel{f^C_0}\longrightarrow C  \stackrel{\delta^C_0}\dashrightarrow$ in $\xi$ with  $P_0^C \in \Px$ and $K_1^C \in\Gpx$. We have the following commutative diagram 
	 \[
	 \xymatrix{
		 &A\ar@{=}[r]\ar[d]&A\ar[d]\\
		 K_1^C\ar[r]\ar@{=}[d]&G\ar[r]\ar[d]&B\ar[d]\ar@{-->}[r]&\\
		 K_1^C\ar[r]&P_0^C\ar[r]\ar@{-->}[d]&C\ar@{-->}[r]\ar@{-->}[d]&\\
		 &&&
	 }
\]	
made of $\mathbb{E}$-triangles in $\xi$ by Lemma \ref{BH}(1) and Lemma \ref{Thm3.2}. So we have $G\in\Gpx$ by the above proof. Then there exists an $\Mbe$-triangle $K_1^A\>P\>G\dashrightarrow$ with $K_1^A\in\Gpx$ and $P\in\Px$ in $\xi$.
 It follows from Lemma \ref{Thm3.2} and $\rm(ET4)^{op}$ that we have the following commutative diagram 
\[
		\xymatrix{
			K_1^A\ar@{=}[d] \ar[r]^{g^A_0} & P_0^A \ar[r]^{f^A_0}\ar[d] & A \ar[d] \ar@{-->}[r] &  \\
			K_1^A\ar[r] & P\ar[d] \ar[r] & G\ar[d] \ar@{-->}[r]& \\
			& P_0^C\ar@{=}[r]\ar@{-->}[d]&P_0^C\ar@{-->}[d] \\
			& & &  }
\] 
where all rows and columns are  $\mathbb{E}$-triangles in $\xi$. Note that $P$ and $P_0^C$ are in $\Px$, so we conclude that $P_0^A\in\Px$. Because of the  $\mathbb{E}$-triangle $A\stackrel{x}\longrightarrow B \stackrel{y}\longrightarrow C  \stackrel{\delta}\dashrightarrow$ is   $\Mcc(\P,-)$-exact, there  exists a morphism $a\in\Mcc(P^C_0,B)$ such that $f_0^C=ya$. So
\[
(f_0^C)^*\delta=(ya)^*0=a^*(y^*\delta)=0=(f_0^A)_*0
\]
by \cite[Corollary 3.5]{HY} and there is a $\xi$-deflation $f^B_0:P_0^A\oplus P_0^C=:P_0^B\to B $ by \citep[Proposition 1]{JDPPR}, which makes the following diagram 
\[
	\xymatrix{P_0^A\ar[r]^{\begin{tiny}\begin{bmatrix} 
	1 \\
	0
	\end{bmatrix}\end{tiny}}\ar[d]_{f^A_0}&P_0^B\ar[r]^{\begin{tiny}\begin{bmatrix}
	0&1
	\end{bmatrix}\end{tiny}}\ar@{-->}[d]^{f^B_0}&P_0^C\ar@{-->}^0[r]\ar[d]^{f^C_0}&\\
	A\ar[r]^x&B\ar[r]^y&C\ar@{-->}[r]^{\delta}&
	}
	\]
commutative. Thus, there is an $\Mbe$-triangle $K_1^B\stackrel{g^B_0}\longrightarrow P^B_0 \stackrel{f^b_0}\longrightarrow B  \stackrel{\delta^B_0}\dashrightarrow$ in $\xi$, which is $\Mcc(-,\Mcx)$-exact by  Lemma \ref{HHH}(1). Then we have the following commutative diagram 
\[
	\xymatrix{
	K_1^A\ar[r]^{x_1}\ar[d]_{g^A_0}&K_1^B\ar[r]^{y_1}\ar[d]^{g_0^B}&K_1^C\ar@{-->}[r]^{\delta_1}\ar[d]^{g_0^C}&\\
	P_0^A\ar[r]^{\begin{tiny}\begin{bmatrix}
	1 \\
	0
	\end{bmatrix}\end{tiny}}\ar[d]_{f_0^A}&P_0^B\ar[r]^{\begin{tiny}\begin{bmatrix}
	0&1
	\end{bmatrix}\end{tiny}}\ar[d]^{f_0^B}&P_0^C\ar@{-->}^0[r]\ar[d]^{f_0^C}&\\
	A\ar[r]^x\ar@{-->}[d]^{\delta_0^A}&B\ar[r]^y\ar@{-->}[d]^{\delta_0^B}&C\ar@{-->}[r]^{\delta}\ar@{-->}[d]^{\delta_0^C}&\\
	&&&
	} 
\]
where all rows and columns are $\mathbb{E}$-triangles in $\xi$ by \citep[Lemma 4.14]{JDP}. It is easy to show that the first vertical is $\Mcc(-,\Mcx)$ by $3\times 3$-Lemma. Recall that  $B$  is in $\Gpx$, so there is an $\mathbb{E}$-triangles 
$K'^B_0\stackrel{g'^B_{0}}\longrightarrow P'^B_{0} \stackrel{f'^B_{0}}\longrightarrow B\stackrel{\delta'^B_{0}}\dashrightarrow $
in $\xi$ by definition, where $K'^B_0\in\Gpx$ and $P'^B_{0}\in\Px$. Then we have $K^B_0\oplus P'^B_{0}\backsimeq K'^B_0\oplus P^B_{0}\in\Gpx$ by \cite[Proposition 4.3]{JDP}. Hence, any $\mathbb{E}$-triangle $K_2^B\stackrel{g^B_1}\longrightarrow P^B_1 \stackrel{f^B_1}\longrightarrow K^B_1  \stackrel{\delta^B_1}\dashrightarrow$ in $\xi$ is $\Mcc(-,\Mcx)$-exact by Lemma \ref{HHH} (2). Repeating this process, we can obtain a $\Mcc(-,\Mcx)$-exact complex 
\[
	\xymatrix{
		\cdots\ar[r]&P^A_2\ar[r]&P^A_1\ar[r]&P^A_0\ar[r]&A
	}
\]
with $P^A_n\in\Px$ for any $n\geq 0$. Therefore, we obtain that  $A$ is in $\Gpx$, as desired.
\end{proof}

The following result is crucial for this section.

\begin{thm}\label{Jan1}
	$\Gpx$ is closed under direct summands.
\end{thm}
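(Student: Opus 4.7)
Let $M \cong A \oplus B$ with $M \in \Gpx$; the goal is to construct a complete $\Px$-resolution of $A$, which by definition yields $A \in \Gpx$. The proof extends the argument given by Hu--Zhang--Zhou in \cite{JDP} for the closure of $\GP$ under direct summands to the present quasi-resolving setting.

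The first step is an $\ext$-vanishing for the summands. By additivity of $\ext^i(-,X)$ combined with Lemma \ref{TTTT}, for every $X \in \Mcx$ and $i \geq 1$,
\[
\ext^i(A,X) \oplus \ext^i(B,X) \cong \ext^i(M,X) = 0,
\]
so both $\ext^{i \geq 1}(A,X)$ and $\ext^{i \geq 1}(B,X)$ vanish, and likewise $\ext^0(A,X) \cong \Mcc(A,X)$ and $\ext^0(B,X) \cong \Mcc(B,X)$. This guarantees that every $\Mbe$-triangle constructed later with $A$ (or $B$) as one of its outer terms is automatically $\Mcc(-,\Mcx)$-exact.

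For the left half, I would take a $\Mcc(-,\Mcx)$-exact $\Mbe$-triangle $K_1 \to P_0 \to M \dashrightarrow$ in $\xi$ with $P_0 \in \Px$ and $K_1 \in \Gpx$. The split retraction $M \to A$ is a $\xi$-deflation (the associated split $\Mbe$-triangle $B \to M \to A \dashrightarrow$ lies in $\Delta_0 \subseteq \xi$), and $\xi$-deflations compose in the extriangulated category $(\Mcc, \Mbe_\xi, \mathfrak{s}_\xi)$ of Lemma \ref{Thm3.2}; hence the composite $P_0 \to M \to A$ is again a $\xi$-deflation. Applying $\rm(ET4)^{op}$ to this composite produces a $3 \times 3$ commutative diagram whose leftmost column displays the kernel $L^A_1$ of $P_0 \to A$ as an extension $K_1 \to L^A_1 \to B \dashrightarrow$ in $\xi$. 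Symmetrically the kernel $L^B_1$ of $P_0 \to B$ fits into $K_1 \to L^B_1 \to A \dashrightarrow$. Running both constructions in tandem and iterating---with $K_1$, and more generally each subsequent syzygy of $M$ (all of which lie in $\Gpx$), playing the role of $M$---yields compatible left $\Px$-resolutions of $A$ and $B$.

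For the right half, I would apply the dual construction. From $M \in \Gpx$ one has a $\Mcc(-,\Mcx)$-exact $\Mbe$-triangle $M \to P^0 \to K^{-1} \dashrightarrow$ in $\xi$ with $P^0 \in \Px$ and $K^{-1} \in \Gpx$. Composing the split section $A \to M$ with $M \to P^0$ gives a $\xi$-inflation $A \to P^0$ (compositions of $\xi$-inflations are $\xi$-inflations, again by Lemma \ref{Thm3.2} and $\rm(ET4)$), and $\rm(ET4)$ produces both $A \to P^0 \to C^A \dashrightarrow$ in $\xi$ and a companion $\Mbe$-triangle $B \to C^A \to K^{-1} \dashrightarrow$; symmetrically for $B$. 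Splicing the left and right halves at $A$ yields an unbounded $\xi$-exact complex of $\Px$-objects with $A$ as the $0$-th syzygy, and the $\ext$-vanishing of the first step upgrades this to a $\Mcc(-,\Mcx)$-exact complex, i.e., the sought complete $\Px$-resolution of $A$.

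The principal obstacle is the iteration. Each successive stage of the left (respectively right) construction requires that the intermediate object $L^A_n$ (respectively $C^A_n$) still admit a counterpart on the $B$-side, so that the argument of the previous paragraph can be re-applied. Because $L^A_n$ lies in $K_n \to L^A_n \to B \dashrightarrow$ with $K_n \in \Gpx$ but without a priori knowledge that $B \in \Gpx$, pursuing the argument for $A$ in isolation is circular. The resolution is to run the constructions for $A$ and $B$ in parallel and argue by simultaneous induction on depth: invoking Theorem \ref{Thm5.8} (closure of $\Gpx$ under $\xi$-extensions and $\xi$-cocones), the Ext-vanishing from the first step, and repeated diagram-chasing via Lemma \ref{BH}, Lemma \ref{Dec30}, and $\rm(ET4)$/$\rm(ET4)^{op}$, one controls both sides in tandem. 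The technical heart of the proof lies in this octahedral bookkeeping, which makes the simultaneous induction close.
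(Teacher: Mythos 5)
Your proposal correctly establishes the preliminary Ext-vanishing for direct summands, correctly identifies that a complete $\Px$-resolution of $A$ must be built one step at a time, and — importantly — correctly locates the circularity: at each stage the new syzygy of $A$ sits in a $\xi$-extension whose cone is the complementary summand $B$, and you have no a priori control on $B$. The gap is that you do not actually resolve this circularity. Asserting that a ``simultaneous induction'' on $A$ and $B$ with ``octahedral bookkeeping'' closes is not a proof; running the two constructions in parallel produces at each depth two objects $L^A_n, L^B_n$, each of which is still coupled to the \emph{other} summand by a triangle, and nothing in your plan extracts from this coupling a statement that the iteration can continue. The circularity persists symmetrically on both sides.

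The paper breaks the circularity differently, and this is the missing idea. After the first step, which produces an $\Mbe$-triangle $M \to P_{-1} \to X \dashrightarrow$ (in your notation, $A \to P_0 \to C^A \dashrightarrow$) and a companion triangle $M' \to X \to K_0 \dashrightarrow$, the paper applies Lemma \ref{BH}(2) to the two triangles $M' \to X \to K_0 \dashrightarrow$ and $M' \to G \to M \dashrightarrow$ (the second is split) to manufacture an auxiliary object $G_{-1}$ fitting in both $G \to G_{-1} \to K_0 \dashrightarrow$ and $X \to G_{-1} \to M \dashrightarrow$. Since $G$ and $K_0$ both lie in $\Gpx$, Theorem \ref{Thm5.8} (closure under $\xi$-extensions) gives $G_{-1} \in \Gpx$ outright, \emph{without} knowing anything about $M'$ (your $B$). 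The triple $(X, G_{-1}, M)$ now plays exactly the role that $(M, G, M')$ played before, so the step can be repeated: one re-anchors the iteration on the newly certified Gorenstein object $G_{-1}$ rather than on the unknown summand. It is this ``build a bigger object out of things you already know to be in $\Gpx$'' move — not a symmetric induction over $A$ and $B$ — that makes the construction go through. Without supplying an analogue of $G_{-1}$ and a proof that it lies in $\Gpx$, your argument does not close.

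One smaller point: the $\Mcc(-,\Mcx)$-exactness of the constructed triangles is not automatic from the first step's Ext-vanishing alone; in the paper it is tracked at every stage via Lemma \ref{Dec30} and Lemma \ref{HHH}, and this tracking interleaves with the $\Gpx$-membership of the auxiliary objects. Any repaired version of your argument will have to carry this bookkeeping along with the certification of membership.
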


\begin{proof}
	Assume that $G\in \Gpx$ and $M$ is a direct summand of $G$. Then there exists $M'\in\Mcc$ such that $G=M\oplus M'$. Therefore, there exist two split $\mathbb{E}$-triangles
\[
M\stackrel{\begin{tiny}\begin{bmatrix}
	1 \\
	0
	\end{bmatrix}\end{tiny}}\longrightarrow G \stackrel{\begin{tiny}\begin{bmatrix}
		0&1
		\end{bmatrix}\end{tiny}}\longrightarrow M'\stackrel{0}{\dashrightarrow} \text{\ and\ }
		M'\stackrel{\begin{tiny}\begin{bmatrix}
			0 \\
			1
			\end{bmatrix}\end{tiny}}\longrightarrow G \stackrel{\begin{tiny}\begin{bmatrix}
				1&0
				\end{bmatrix}\end{tiny}}\longrightarrow M\stackrel{0}{\dashrightarrow} 
\]
in $\xi$. Since $G\in\Gpx$, there exists a  $\Mcc(-,\Mcx)$-exact $\mathbb{E}$-triangle $G\stackrel{g_{-1}}\longrightarrow P_{-1} \stackrel{f_{-1}}\longrightarrow K_{0} \stackrel{\delta_{-1}}\dashrightarrow$ in $\xi$ with $P_{-1}\in\Px$ and $K_0\in\Gpx$. By Lemma \ref{Dec30}(1), there exists the following commutative diagram
\[
		\xymatrix{
			M\ar@{=}[d] \ar[r]^{\begin{tiny}\begin{bmatrix}
				1 \\
				0
				\end{bmatrix}\end{tiny}} & G  \ar[r]^{\begin{tiny}\begin{bmatrix}
					0&1
					\end{bmatrix}\end{tiny}}\ar[d]^{g_{-1}} & M' \ar[d]^{x'_{-1}} \ar@{-->}[r]^{0} &  \\
			M\ar[r]^{x_{-1}} & P_{-1}\ar[d]^{f_{-1}} \ar[r]^{y_{-1}} & X\ar[d]^{y'_{-1}} \ar@{-->}[r]^{\theta_{-1}} & \\
			& K_0\ar@{=}[r]\ar@{-->}[d]^{\delta_{-1}}&K_0\ar@{-->}[d]^{\theta'_{-1}=\begin{tiny}\begin{bmatrix}
				0&1
				\end{bmatrix}\end{tiny}_*\delta_{-1}} \\
			& & &  }
	\] 
	where all rows and columns are $\Mcc(-,\Mcx)$-exact  $\mathbb{E}$-triangles in $\xi$. Note that \[
M'\stackrel{x'_{-1}}\longrightarrow X \stackrel{y'_{-1}}\longrightarrow K_0\stackrel{\theta'_{-1}}{\dashrightarrow} \text{\ and\ }
		M'\stackrel{\begin{tiny}\begin{bmatrix}
			0 \\
			1
			\end{bmatrix}\end{tiny}}\longrightarrow G \stackrel{\begin{tiny}\begin{bmatrix}
				1&0
				\end{bmatrix}\end{tiny}}\longrightarrow M\stackrel{0}{\dashrightarrow} 
\]
are $\mathbb{E}$-triangles in $\xi$.  It follows Lemma \ref{BH}(2) and Lemma \ref{Thm3.2} that  we have  the following commutative diagram
 \[
	\xymatrix{
	           M'\ar[d]_{\begin{tiny}\begin{bmatrix}
					0 \\
					1
					\end{bmatrix}\end{tiny}}\ar[r]^{x'_{-1}}&X\ar[d]^{g'_{-1}}\ar[r]^{y'_{-1}}&K_0\ar@{=}[d]\ar@{-->}[r]^{\theta'_{-1}} &\\
				G\ar[d]_{\begin{tiny}\begin{bmatrix}
					1&0
					\end{bmatrix}\end{tiny}}\ar[r]^{x''_{-1}}&G_{-1}\ar[r]^{y''_{-1}}\ar[d]^{f'_{-1}}&K_0\ar@{-->}[r]^{\theta''_{-1}} &\\
				M\ar@{=}[r]\ar@{-->}[d]^{0}&M\ar@{-->}[d]^{0}\\
				&&
			}
\]
where \[
G\stackrel{x''_{-1}}\longrightarrow G_{-1} \stackrel{y''_{-1}}\longrightarrow K_0\stackrel{\theta''_{-1}}{\dashrightarrow} \text{\ and\ }
		X\stackrel{g'_{-1}}\longrightarrow G_{-1} \stackrel{f'_{-1}}\longrightarrow M\stackrel{0}{\dashrightarrow} 
\]
are  $\mathbb{E}$-triangles in $\xi$. Moreover, the  two $\mathbb{E}$-triangles above are $\Mcc(-,\Mcx)$-exact by Lemma \ref{HHH}. Because $G$ and $K_0$ are in $\Gpx$, the object $G_{-1}$ belongs to $\Gpx$ by Theorem \ref{Thm5.8}. Therefore, there is  a  $\Mcc(-,\Mcx)$-exact $\mathbb{E}$-triangle $G_{-1}\stackrel{g_{-2}}\longrightarrow P_{-2} \stackrel{f_{-2}}\longrightarrow K_{-1} \stackrel{\delta_{-2}}\dashrightarrow$ in $\xi$ with $P_{-2}\in\Px$ and $K_{-1}\in\Gpx$. So we have the following commutative diagram
\[
		\xymatrix{
			X\ar@{=}[d] \ar[r]^{g'_{-1}} & G_{-1}  \ar[r]^{f'_{-1}}\ar[d]^{g_{-2}} & M \ar[d]^{x'_{-2}} \ar@{-->}[r]^{0} &  \\
			X\ar[r]^{x_{-2}} & P_{-2}\ar[d]^{f_{-2}} \ar[r]^{y_{-2}} & Y\ar[d]^{y'_{-2}} \ar@{-->}[r]^{\theta_{-2}} & \\
			& K_{-1}\ar@{=}[r]\ar@{-->}[d]^{\delta_{-2}}&K_{-1}\ar@{-->}[d]^{(f'_{-1})_*\delta_{-2}} \\
			& & &  }
	\] 
where all rows and columns are  $\Mcc(-,\Mcx)$-exact  $\mathbb{E}$-triangles in $\xi$ by Lemma \ref{Dec30}(1). Proceedings this manner, we can obtain a $\Mcc(-,\Mcx)$-exact complex 
\[\xymatrix{
	M\ar[r]& P_{-1}\ar[r]&P_{-2}\ar[r]&P_{-3}\ar[r]&\cdots
	}
\]
with $P_n\in\Px$ for any $n<0$. Similarly, we can get the following  $\Mcc(-,\Mcx)$-exact complex
\[
	\xymatrix{
		\cdots\ar[r]&P_2\ar[r]&P_1\ar[r]&P_0\ar[r]&M
	}
\]
with $P_n\in\Px$ for any $n\geq 0$. Hence, $M\in\Gpx$, as desired.
\end{proof}
 
\begin{prop}\label{Prop5.10}
Let $M$ be an object in $\Mcc$. Assume that there is a $\xi$-exact complex 
\[
\cdots\>G_n\>\cdots\>G_1\>G_0\>M\>0 \tag{5.1}
\]
with all $G_n\in\Gpx$. Then there is a $\xi$-exact complex 
\[
\cdots\>P_n\>\cdots\>P_1\>P_0\>M\>0 \tag{5.2}
\]
with all $P_n\in\Px$. Moreover, if the $\xi$-exact complex (5.1) is $\Mcc(-,\Mcx)$-exact, then so is (5.2).
\end{prop}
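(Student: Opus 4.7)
The strategy is an inductive replacement: I will produce $\Mbe$-triangles $L_{n+1}\to P_n\to L_n\dashrightarrow$ in $\xi$ with $P_n\in\Px$ and $L_0=M$, one at a time, and then splice them into the desired complex (5.2). The entire argument reduces to a single ``one-step'' replacement: given any $\xi$-exact complex $\cdots\to G_1\to G_0\to N\to 0$ with all terms in $\Gpx$ (which is $\Mcc(-,\Mcx)$-exact in the moreover case), construct an $\Mbe$-triangle $N'\to P\to N\dashrightarrow$ in $\xi$ with $P\in\Px$ (and $\Mcc(-,\Mcx)$-exact in the moreover case), together with a new $\xi$-exact complex with $\Gpx$-terms ending at $N'$ (and $\Mcc(-,\Mcx)$-exact in the moreover case).

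Write $K_{n+1}\to G_n\to K_n\dashrightarrow$ for the $\xi$-resolution $\Mbe$-triangles of (5.1), with $K_0=M$. Since $G_0\in\Gpx$, the definition of $\Gpx$ furnishes a $\Mcc(-,\Mcx)$-exact $\Mbe$-triangle $G_0'\to P_0\to G_0\dashrightarrow$ in $\xi$ with $P_0\in\Px$ and $G_0'\in\Gpx$. I will apply Lemma~\ref{Dec30}(2) with $C=G_0$ to this $\Mbe$-triangle and to $K_1\to G_0\to M\dashrightarrow$: the resulting commutative diagram has all rows and columns $\Mbe$-triangles in $\xi$ (and, in the moreover case, all of them $\Mcc(-,\Mcx)$-exact), supplying both the desired first step $L_1\to P_0\to M\dashrightarrow$ of (5.2) and the auxiliary $\Mbe$-triangle $G_0'\to L_1\to K_1\dashrightarrow$.

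To run the induction I must show that $L_1$ admits a $\xi$-exact complex with $\Gpx$-terms (respectively a $\Mcc(-,\Mcx)$-exact one). Combining the $\Mbe$-triangles $G_0'\to L_1\to K_1\dashrightarrow$ and $K_2\to G_1\to K_1\dashrightarrow$ via Lemma~\ref{BH}(1) and Lemma~\ref{Thm3.2} produces an object $N_1$ fitting in $\Mbe$-triangles $K_2\to N_1\to L_1\dashrightarrow$ and $G_0'\to N_1\to G_1\dashrightarrow$ in $\xi$. Since $\Gpx$ is closed under $\xi$-extensions (Theorem~\ref{Thm5.8}), $N_1\in\Gpx$, so splicing with the tail $\cdots\to G_2\to K_2\to 0$ of (5.1) yields $\cdots\to G_2\to N_1\to L_1\to 0$ with $\Gpx$-terms. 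For the moreover case, $G_0'\to N_1\to G_1\dashrightarrow$ is $\Mcc(-,\Mcx)$-exact by Lemma~\ref{HHH}(1) (since $G_1\in\Gpx$); applying $\Mcc(-,X)$ for $X\in\Mcx$ to the $3\times 3$ diagram and running a diagram chase as in the proof of Lemma~\ref{Dec30} then forces $K_2\to N_1\to L_1\dashrightarrow$ to be $\Mcc(-,\Mcx)$-exact.

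Iterating the one-step construction with $L_n$ in place of $M$ produces all the $\Mbe$-triangles $L_{n+1}\to P_n\to L_n\dashrightarrow$, which splice into the desired $\xi$-exact complex (5.2), and into an $\Mcc(-,\Mcx)$-exact one whenever (5.1) is $\Mcc(-,\Mcx)$-exact. The main obstacle is the bookkeeping at the iteration step: one must verify at each stage that the new $L_n$ again carries a $\xi$-exact complex with $\Gpx$-terms (and, in the moreover case, a $\Mcc(-,\Mcx)$-exact one) so that the replacement argument can be repeated. This is handled entirely by closure of $\Gpx$ under $\xi$-extensions (Theorem~\ref{Thm5.8}) together with Lemmas~\ref{HHH} and~\ref{Dec30}.
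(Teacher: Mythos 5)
Your proposal is correct and follows essentially the same route as the paper's proof: replace $G_0$ by $P_0\in\Px$ via the definition of $\Gpx$, build the $\rm(ET4)^{op}$-type $3\times 3$ diagram (your Lemma~\ref{Dec30}(2) in the $\Mcc(-,\Mcx)$-exact case, and $\rm(ET4)^{op}$ plus Lemma~\ref{Thm3.2} in general) to produce the first $\Mbe$-triangle $L_1\to P_0\to M\dashrightarrow$ and the auxiliary cocone $G_0'\to L_1\to K_1\dashrightarrow$, then pull back against $K_2\to G_1\to K_1\dashrightarrow$ via Lemma~\ref{BH}(1) to get $N_1\in\Gpx$ (closure under $\xi$-extensions from Theorem~\ref{Thm5.8}) resolving $L_1$, and iterate; the moreover part tracks $\Mcc(-,\Mcx)$-exactness through the same diagrams using a $3\times 3$-lemma chase. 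Your remark that $G_0'\to N_1\to G_1\dashrightarrow$ is $\Mcc(-,\Mcx)$-exact via Lemma~\ref{HHH}(1) is a helpful explicit justification of the middle-column exactness that the paper leaves implicit in its $3\times 3$ argument.
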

\begin{proof}
From (5.1), we have the following $\Mbe$-triangles 
\[
K_1\>G_0\>M\dashrightarrow\text{\ and \ }  K_2\>G_1\>K_1\dashrightarrow
\]
in $\xi$. There exists an $\Mbe$-triangle $G\>P_0\>G_0\dashrightarrow$ in $\xi$ with $P_0\in\Px$ and $G\in\Gpx$, since $G_0\in\Gpx$. By Lemma \ref{Thm3.2} and $\rm(ET4)^{op}$, we have the following commutative diagram 
\[
		\xymatrix{
			G\ar@{=}[d] \ar[r] & L \ar[r]\ar[d] & K_1 \ar[d] \ar@{-->}[r] &  \\
			G\ar[r] & P_0\ar[d] \ar[r] & G_0\ar[d] \ar@{-->}[r]& \\
			& M\ar@{=}[r]\ar@{-->}[d]&M\ar@{-->}[d] \\
			& & &  }\tag {$*$}
\] 
where all  rows and columns are  $\mathbb{E}$-triangles in $\xi$. Thus, we have the following commutative diagram 
\[
\xymatrix{
	&G\ar@{=}[r]\ar[d]&G\ar[d]\\
	K_2\ar[r]\ar@{=}[d]&G'\ar[r]\ar[d]&L\ar[d]\ar@{-->}[r]&\\
	K_2\ar[r]&G_1\ar[r]\ar@{-->}[d]&K_1\ar@{-->}[r]\ar@{-->}[d]&\\
	&&&
}\tag{$**$}
\]	
made of $\mathbb{E}$-triangles in $\xi$ by Lemma \ref{BH}(1) and Lemma \ref{Thm3.2}. So $G'\in\Gpx$ by Theorem \ref{Thm5.8}. Repeating this process, we can obtain a $\xi$-exact complex 
\[
\cdots\>P_n\>\cdots\>P_1\>P_0\>M\>0 
\]
with all $P_n\in\Px$. Moreover, if (5.1) is $\Mcc(-,\Mcx)$-exact,  we can conclude that the $\Mbe$-triangle $G\>L\>K_1\dashrightarrow$ is $\Mcc(-,\Mcx)$-exact by Lemma \ref{Dec30} from diagram $(*)$. We fix $X\in\Mcx$. Applying $\Mcc(-,X)$ to the diagram ($**$), one obtains the following commutative diagram of abelian groups:
\[
	\xymatrix{
		&0\ar[d]&0\ar[d]&0\ar[d]\\
		0\ar[r]&\mathcal{C}(K_1,X)\ar[d]\ar[r]&\mathcal{C}(G_1,X)\ar[d]\ar[r] & \Mcc(K_2,X)\ar@{=}[d]\ar[r]&0\\
		0\ar@{..>}[r]&\mathcal{C}(L,X) \ar[d]\ar[r]& \mathcal{C}(G',X)  \ar[r]\ar[d]& \mathcal{C}(K_2,X) \ar[d] \ar@{..>}[r]&0  \\
		0\ar[r]&\mathcal{C}(G,X)\ar[d]\ar@{=}[r] & \mathcal{C}(G,X)\ar[d] \ar[r] & 0 \ar[d] \ar[r]&0\\
		&0&0&0
			}
\]
So the $\Mbe$-triangle $K_2\>G'\>L\dashrightarrow$ is $\Mcc(-,X)$-exact by $3\times 3$-Lemma. Repeating this process, we obtain that the $\xi$-exact complex (5.2) is $\Mcc(-,\Mcx)$-exact.
\end{proof}

The next result is dual to Proposition \ref{Prop5.10}.

\begin{prop}
Let $M$ be an object in $\Mcc$. Assume that there is a $\xi$-exact complex 
\[
0\>M\>G_{-1}\>G_{-2}\>\cdots\>G_{-n}\>\cdots\tag{5.3}
\]
with all $G_{-n}\in\Gpx$. Then there is a $\xi$-exact complex 
\[
0\>M\>P_{-1}\>P_{-2}\>\cdots\>P_{-n}\>\cdots\tag{5.4}
\]
with all $P_{-n}\in\Px$. Moreover, if the $\xi$-exact complex (5.3) is $\Mcc(-,\Mcx)$-exact, then so is (5.4).
\end{prop}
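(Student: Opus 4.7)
The plan is to dualize the proof of Proposition \ref{Prop5.10}, replacing each use of (ET4)$^{op}$ by (ET4) and each use of Lemma \ref{BH}(1) by Lemma \ref{BH}(2). From (5.3) I extract the $\Mbe$-triangles $M\longrightarrow G_{-1}\longrightarrow K_1\dashrightarrow$ and $K_n\longrightarrow G_{-(n+1)}\longrightarrow K_{n+1}\dashrightarrow$ in $\xi$ for $n\geq 1$. Since $G_{-1}\in\Gpx$, the definition supplies an $\Mbe$-triangle $G_{-1}\longrightarrow P_{-1}\longrightarrow G'_{-1}\dashrightarrow$ in $\xi$ with $P_{-1}\in\Px$ and $G'_{-1}\in\Gpx$. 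These two triangles share $G_{-1}$ as the middle of the first and source of the second, so Lemma \ref{Thm3.2} together with (ET4) produces a $3\times 3$ commutative diagram whose middle row and right column yield new $\Mbe$-triangles $M\longrightarrow P_{-1}\longrightarrow L_1\dashrightarrow$ and $K_1\longrightarrow L_1\longrightarrow G'_{-1}\dashrightarrow$ in $\xi$.

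Next I splice: the triangles $K_1\longrightarrow L_1\longrightarrow G'_{-1}\dashrightarrow$ and $K_1\longrightarrow G_{-2}\longrightarrow K_2\dashrightarrow$ share the source $K_1$, so Lemma \ref{BH}(2) (with Lemma \ref{Thm3.2}) provides an object $M_2$ sitting in $\Mbe$-triangles $G_{-2}\longrightarrow M_2\longrightarrow G'_{-1}\dashrightarrow$ and $L_1\longrightarrow M_2\longrightarrow K_2\dashrightarrow$ in $\xi$. Since $\Gpx$ is closed under $\xi$-extensions by Theorem \ref{Thm5.8}, the first triangle yields $M_2\in\Gpx$, so I may restart the argument with $L_1$ in the role of $M$ and $M_2$ in the role of $G_{-1}$, producing $P_{-2}\in\Px$ and $L_2$ together with $\Mbe$-triangles $L_1\longrightarrow P_{-2}\longrightarrow L_2\dashrightarrow$ and $K_2\longrightarrow L_2\longrightarrow G'_{-2}\dashrightarrow$ in $\xi$. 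Iterating and pasting the triangles $L_{n-1}\longrightarrow P_{-n}\longrightarrow L_n\dashrightarrow$ (with $L_0:=M$) delivers the required $\xi$-exact complex (5.4) with each $P_{-n}\in\Px$.

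For the ``moreover'' clause, assume (5.3) is $\Mcc(-,\Mcx)$-exact, so each $K_n\longrightarrow G_{-(n+1)}\longrightarrow K_{n+1}\dashrightarrow$ is $\Mcc(-,\Mcx)$-exact (with $K_0=M$). Every chosen triangle $G_{-n}\longrightarrow P_{-n}\longrightarrow G'_{-n}\dashrightarrow$ (and later $M_n\longrightarrow P_{-n}\longrightarrow G'_{-n}\dashrightarrow$) is automatically $\Mcc(-,\Mcx)$-exact by Lemma \ref{HHH}(1), since $G'_{-n}\in\Gpx$. Applying Lemma \ref{Dec30}(1) to each (ET4) step shows that all four triangles in the corresponding $3\times 3$ diagram are $\Mcc(-,\Mcx)$-exact; in particular $L_{n-1}\longrightarrow P_{-n}\longrightarrow L_n\dashrightarrow$ and $K_n\longrightarrow L_n\longrightarrow G'_{-n}\dashrightarrow$ are. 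For each Lemma \ref{BH}(2) splicing I apply $\Mcc(-,X)$ with $X\in\Mcx$ to the $3\times 3$ diagram and invoke the $3\times 3$-Lemma exactly as in the closing diagram chase of the proof of Proposition \ref{Prop5.10}, deducing that $L_{n-1}\longrightarrow M_n\longrightarrow K_n\dashrightarrow$ is $\Mcc(-,\Mcx)$-exact. This propagates $\Mcc(-,\Mcx)$-exactness through the induction and proves (5.4) is $\Mcc(-,\Mcx)$-exact.

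The main obstacle is not any single step but the inductive bookkeeping: at stage $n$ I must simultaneously carry along the ``live'' base $L_{n-1}$, a companion $M_n\in\Gpx$ that will be replaced by $P_{-n}$, and a cosyzygy $K_n$ from the original coresolution, and verify that (ET4) and Lemma \ref{BH}(2) continue to apply and continue to preserve $\Mcc(-,\Mcx)$-exactness. Because Lemma \ref{Dec30} only directly addresses the (ET4)-type step, each Lemma \ref{BH}(2) splicing requires an independent short $3\times 3$-Lemma argument, patterned exactly on the one used at the end of the proof of Proposition \ref{Prop5.10}.
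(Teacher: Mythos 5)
Your proof is correct and is precisely the argument the paper intends by the remark ``The next result is dual to Proposition \ref{Prop5.10}'': you replace $\rm(ET4)^{op}$ with $\rm(ET4)$, Lemma \ref{BH}(1) with Lemma \ref{BH}(2), and run the same induction. You also correctly navigate the one genuine wrinkle that the ``moreover'' clause is \emph{not} a naive dual — it still concerns $\Mcc(-,\Mcx)$-exactness rather than $\Mcc(\Mcx,-)$-exactness — by observing that Lemma \ref{HHH}(1), Lemma \ref{Dec30}(1), and the contravariant $3\times 3$-Lemma chase continue to apply on the $\rm(ET4)$- and Lemma \ref{BH}(2)-diagrams, so the exactness of $\Mcc(-,X)$ propagates through the cosyzygy triangles $L_{n-1}\>P_{-n}\>L_n\dashrightarrow$.
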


\begin{cor}\label{Cor5.12}
	Let $M$ be an object in $\Mcc$, then $M$ admits a  $\xi$-exact complex
\[
\cdots\>G_n\>\cdots\>G_1\>G_0\>M\>0 
\]
with all $G_n\in\Gpx$  which is $\Mcc(-,\Mcx)$-exact if and only if $M$ admits a  $\xi$-exact complex
\[
\cdots\>P_n\>\cdots\>P_1\>P_0\>M\>0 
\]
with all $P_n\in\Px$  which is $\Mcc(-,\Mcx)$-exact.
\end{cor}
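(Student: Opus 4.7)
The statement is essentially a packaging of Proposition \ref{Prop5.10} together with the elementary inclusion $\Px\subseteq \Gpx$, so my plan is to handle each direction in a short paragraph.

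For the nontrivial direction ``$\Rightarrow$'', I would assume $M$ admits a $\Mcc(-,\Mcx)$-exact $\xi$-exact complex
\[
\cdots\>G_n\>\cdots\>G_1\>G_0\>M\>0
\]
with each $G_n\in\Gpx$. This is precisely the hypothesis of Proposition \ref{Prop5.10} together with its ``moreover'' clause, so applying that proposition directly produces a $\Mcc(-,\Mcx)$-exact $\xi$-exact complex
\[
\cdots\>P_n\>\cdots\>P_1\>P_0\>M\>0
\]
with all $P_n\in\Px$, which is exactly what is required.

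For the ``$\Leftarrow$'' direction, suppose $M$ admits a $\Mcc(-,\Mcx)$-exact $\xi$-exact complex with all $P_n\in\Px$. By Proposition \ref{Prop5.3} we have $\Px\subseteq \Gpx$, so the very same complex already exhibits $M$ as the target of a $\Mcc(-,\Mcx)$-exact $\xi$-exact complex with terms in $\Gpx$; no further construction is needed.

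There is essentially no obstacle here since all the real work---in particular the inductive construction of the $\Px$-resolution together with the preservation of $\Mcc(-,\Mcx)$-exactness through the diagrams built via Lemma \ref{BH}, Lemma \ref{Thm3.2}, (ET4)$^{\mathrm{op}}$, and Lemma \ref{Dec30}---is already encoded in Proposition \ref{Prop5.10}. Thus the proof reduces to two short invocations, one of Proposition \ref{Prop5.10} and one of Proposition \ref{Prop5.3}.
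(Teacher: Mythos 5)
Your proof is correct and takes exactly the approach the paper intends: the forward direction is the ``moreover'' clause of Proposition \ref{Prop5.10}, and the backward direction is the observation that $\Px=\Mcx\cap\Gpx\subseteq\Gpx$ from Proposition \ref{Prop5.3}, so a $\Px$-complex is already a $\Gpx$-complex.
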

Dually, we have the following corollary.
\begin{cor}\label{Cor5.13}
	Let $M$ be an object in $\Mcc$, then $M$ admits a $\xi$-exact complex
\[
0\>M\>G_{-1}\>G_{-2}\>\cdots\>G_{-n}\>\cdots
\]
with all $G_{-n}\in\Gpx$ which is $\Mcc(-,\Mcx)$-exact if and only if $M$ admits a  $\xi$-exact complex
\[
	0\>M\>P_{-1}\>P_{-2}\>\cdots\>P_{-n}\>\cdots
\]
with all $P_{-n}\in\Px$ which is $\Mcc(-,\Mcx)$-exact.
\end{cor}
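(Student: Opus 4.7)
The plan is to derive Corollary \ref{Cor5.13} as an immediate formal consequence of the preceding proposition (the dual of Proposition \ref{Prop5.10}) together with the inclusion $\Px\subseteq\Gpx$, exactly mirroring how Corollary \ref{Cor5.12} was obtained from Proposition \ref{Prop5.10}. Both implications will drop out without any new computation.

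For the ``only if'' direction, I start with a $\Mcc(-,\Mcx)$-exact $\xi$-exact coresolution $0\to M\to G_{-1}\to G_{-2}\to\cdots$ of $M$ with each $G_{-n}\in\Gpx$. Applying the moreover clause of the preceding proposition directly produces a $\Mcc(-,\Mcx)$-exact $\xi$-exact complex $0\to M\to P_{-1}\to P_{-2}\to\cdots$ with each $P_{-n}\in\Px$, which is exactly the required $\Px$-coresolution of $M$.

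For the ``if'' direction, I appeal to $\Px\subseteq\Gpx$, recorded in Remark~(1) following Definition 5.1 and consistent with Proposition \ref{Prop5.3}: every object of $\Px$ is in particular $\mathcal{GQ_{\Mcx}(\xi)}$-projective. Hence a given $\Mcc(-,\Mcx)$-exact $\xi$-exact coresolution of $M$ whose terms lie in $\Px$ already serves, with no modification, as one whose terms lie in $\Gpx$.

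I do not foresee any obstacle here, since the essential technical content---producing a $\Px$-coresolution from a $\Gpx$-coresolution while preserving $\Mcc(-,\Mcx)$-exactness---has already been carried out inside the proof of the preceding proposition via the duals of the diagrams $(*)$ and $(**)$ from Proposition \ref{Prop5.10}, together with the closure properties of $\Gpx$ supplied by Theorem \ref{Thm5.8}.
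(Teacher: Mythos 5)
Your proposal is correct and follows exactly the route the paper intends: the ``only if'' direction is the moreover clause of the dual of Proposition \ref{Prop5.10}, and the ``if'' direction is immediate from the inclusion $\Px\subseteq\Gpx$ (Proposition \ref{Prop5.3}). This is just the dual of the derivation of Corollary \ref{Cor5.12}, which is all the paper offers.
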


\begin{lem}
	Let $M$ be an object in $\Mcc$, then the following statemants are equivalent:

(1) $M$ is a $\mathcal{GQ_{\Mcx}(\xi)}$-projective object.

(2) There exist two $\xi$-exact complexes 
\[
\cdots\>P_1\>P_0\>M\>0 \text{\ and \ } 0\>M\>P_{-1}\>P_{-2}\>\cdots
\]
with $P_n\in\Px$ for any $n\in\mathbb{Z}$, which are both $\Mcc(-,\Mcx)$-exact.

(3) There exist $\Mcc(-,\Mcx)$-exact $\Mbe$-triangles $K_{n+1}\>P_n\>K_n\dashrightarrow$ in $\xi$ such that $P_n\in\Px$ for any $n\in\mathbb{Z}$ and $K_0=M$.
\end{lem}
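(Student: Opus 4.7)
The plan is to observe that all three conditions are essentially reformulations of the same data, namely a two-sided $\xi$-exact complex of $\xi$-projective objects in $\Mcx$ passing through $M$, each of whose $\xi$-resolution $\Mbe$-triangles is $\Mcc(-,\Mcx)$-exact. The proof will therefore be largely a matter of unpacking and re-indexing definitions; no auxiliary results such as Corollaries \ref{Cor5.12} or \ref{Cor5.13} will be required, since the half-complexes of (2) already use objects in $\Px$ rather than $\Gpx$.

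First I would establish (1) $\Leftrightarrow$ (3). By the definition of $\Gpx$, $M$ is $\mathcal{GQ_{\Mcx}(\xi)}$-projective precisely when there exists a complete $\mathcal{P_{\Mcx}}(\xi)$-resolution $\mathbf{P}: \cdots \to P_1 \to P_0 \to P_{-1} \to \cdots$ whose $\xi$-resolution $\Mbe$-triangles $K_{n+1}\stackrel{g_n}\to P_n \stackrel{f_n}\to K_n\stackrel{\delta_n}\dashrightarrow$ are all $\Mcc(-,\Mcx)$-exact and with $M\cong K_j$ for some $j\in\mathbb{Z}$. After a shift of indices we may assume $j=0$, and then the family of $\Mbe$-triangles attached to $\mathbf{P}$ is exactly the data of (3). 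Conversely, given the family in (3), setting $d_n = g_{n-1} f_n$ recovers a complete $\mathcal{P_{\Mcx}}(\xi)$-resolution with $K_0 = M$, so $M\in\Gpx$.

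Next I would prove (3) $\Leftrightarrow$ (2). Splitting the family of $\Mbe$-triangles of (3) by sign of index yields the two halves: for $n\geq 0$ the triangles $K_{n+1}\to P_n\to K_n\dashrightarrow$, with $K_0=M$, serve as the $\xi$-resolution $\Mbe$-triangles of the $\xi$-exact complex $\cdots \to P_1\to P_0\to M\to 0$, and their $\Mcc(-,\Mcx)$-exactness is, by definition, the $\Mcc(-,\Mcx)$-exactness of this complex. The analogous statement for $n<0$ produces the complex $0\to M\to P_{-1}\to P_{-2}\to \cdots$. Conversely, every $\xi$-exact complex in (2) comes equipped with its $\xi$-resolution $\Mbe$-triangles by Definition 4.4 of Hu--Zhang--Zhou, and since both halves have the common syzygy $K_0=M$, these triangles can be concatenated into a single family indexed by $\mathbb{Z}$ satisfying (3), with the $\Mcc(-,\Mcx)$-exactness of the individual $\Mbe$-triangles coming directly from the $\Mcc(-,\Mcx)$-exactness of the two complexes.

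There is essentially no serious obstacle: the only point requiring care is the bookkeeping involved in matching the two half-complexes at $M=K_0$ and verifying that the equivalence between $\Mcc(-,\Mcx)$-exactness of a $\xi$-exact complex and that of its $\xi$-resolution $\Mbe$-triangles is preserved on both sides; both verifications follow immediately from the definitions recalled in Section 2.
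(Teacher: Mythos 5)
Your proposal is correct and follows the same route the paper takes, which is simply the observation that all three conditions are reformulations of the definition of a complete $\mathcal{P_{\Mcx}}(\xi)$-resolution passing through $M$; the paper's entire proof is the one-line remark ``It follows from the definition of $\mathcal{GQ_{\Mcx}(\xi)}$-projective object.'' Your more detailed unpacking (the index shift in $(1)\Leftrightarrow(3)$ and the splitting/concatenation at $K_0=M$ in $(3)\Leftrightarrow(2)$, invoking only the definitions of $\xi$-exact complex and $\Mcc(-,\Mcx)$-exactness) is a faithful expansion of that remark and correctly notes that Corollaries~\ref{Cor5.12} and~\ref{Cor5.13} are not needed here since the objects are already in $\Px$.
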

\begin{proof}
It follows from the definition of $\mathcal{GQ_{\Mcx}(\xi)}$-projective object.
\end{proof}

\begin{thm}\label{Thm5.15}
	Let $M$ be an object in $\Mcc$, then the following statemants are equivalent:

(1) $M$ is a $\mathcal{GQ_{\Mcx}(\xi)}$-projective object.

(2)  There exist $\Mcc(-,\Mcx)$-exact and $\Mcc(\Mcx,-)$-exact  $\Mbe$-triangles $K_{n+1}\>G_n\>K_n\dashrightarrow$ in $\xi$ such that $G_n\in\Gpx$ for any $n\in\mathbb{Z}$ and $K_0=M$.

(3) There exist $\Mcc(-,\Mcx)$-exact $\Mbe$-triangles $K_{n+1}\>G_n\>K_n\dashrightarrow$ in $\xi$ such that $G_n\in\Gpx$ for any $n\in\mathbb{Z}$ and $K_0=M$.

(4) There exists a $\Mcc(-,\Mcx)$-exact and $\Mcc(\Mcx,-)$-exact  $\Mbe$-triangle $M\>G\>M\dashrightarrow$ in $\xi$ with $G\in\Gpx$.

(5) There exists a $\Mcc(-,\Mcx)$-exact $\Mbe$-triangle $M\>G\>M\dashrightarrow$ in $\xi$ with $G\in\Gpx$.
\end{thm}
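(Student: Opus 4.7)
The plan is to run the circuit $(1)\Rightarrow(4)\Rightarrow(2)\Rightarrow(3)\Rightarrow(1)$ together with the loop $(1)\Rightarrow(5)\Rightarrow(1)$; the implications $(4)\Rightarrow(5)$ and $(2)\Rightarrow(3)$ are immediate because the hypotheses in $(5)$ and $(3)$ are strictly weaker than those in $(4)$ and $(2)$ respectively. The key observation powering the ``easy half'' is that $\Gpx$ is closed under finite direct sums: by Theorem \ref{Thm5.8}, $\Gpx$ is quasi-resolving and hence closed under $\xi$-extensions, while the split $\Mbe$-triangle $M\longrightarrow M\oplus M\longrightarrow M\dashrightarrow 0$ lies in $\xi$ because $\Delta_0\subseteq\xi$. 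Consequently, whenever $M\in\Gpx$ we also have $M\oplus M\in\Gpx$.

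For $(1)\Rightarrow(4)$ I would take $G:=M\oplus M$ together with the split $\Mbe$-triangle above. For every $X\in\Mcx$, both induced sequences $0\to\Mcc(M,X)\to\Mcc(M\oplus M,X)\to\Mcc(M,X)\to 0$ and $0\to\Mcc(X,M)\to\Mcc(X,M\oplus M)\to\Mcc(X,M)\to 0$ split as direct sums, so the triangle is simultaneously $\Mcc(-,\Mcx)$-exact and $\Mcc(\Mcx,-)$-exact. For $(4)\Rightarrow(2)$, set $K_n:=M$ and $G_n:=G$ for every $n\in\mathbb{Z}$; then each required $\Mbe$-triangle $K_{n+1}\to G_n\to K_n\dashrightarrow$ coincides with the triangle produced in $(4)$ and retains both exactness conditions, while $K_0=M$ by construction.

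The substantive direction is $(3)\Rightarrow(1)$ and $(5)\Rightarrow(1)$. For $(3)\Rightarrow(1)$, splice the given triangles into a $\Mcc(-,\Mcx)$-exact $\xi$-exact complex
\[
\cdots\longrightarrow G_1\longrightarrow G_0\longrightarrow G_{-1}\longrightarrow G_{-2}\longrightarrow\cdots
\]
with every $G_n\in\Gpx$ and with $M=K_0$ appearing as the $0$th syzygy. Truncating at $M$ and applying Proposition \ref{Prop5.10} on the right half and Corollary \ref{Cor5.13} on the left half replaces each $G_n$ by some $P_n\in\Px$, producing a $\Mcc(-,\Mcx)$-exact complete $\Px$-resolution of $M$; hence $M\in\Gpx$ by definition.

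For $(5)\Rightarrow(1)$, given the $\Mbe$-triangle $M\stackrel{i}{\longrightarrow}G\stackrel{p}{\longrightarrow}M\dashrightarrow\delta$ in $\xi$, set $d:=i\circ p\colon G\to G$. Because $p\circ i=0$ holds for every $\Mbe$-triangle, we have $d^2=0$; moreover the factorization $d=i\circ p$ through $M$ realizes the complex
\[
\cdots\longrightarrow G\stackrel{d}{\longrightarrow}G\stackrel{d}{\longrightarrow}G\longrightarrow\cdots
\]
as a $\xi$-exact complex in the sense of Definition 4.4, with every syzygy equal to $M$ and every $\xi$-resolution $\Mbe$-triangle equal to our given $M\to G\to M$. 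Since that single triangle is $\Mcc(-,\Mcx)$-exact, so is the entire complex, and all its terms lie in $\Gpx$; Proposition \ref{Prop5.10} together with Corollary \ref{Cor5.13} then upgrade it to a $\Mcc(-,\Mcx)$-exact complete $\Px$-resolution of $M$, giving $M\in\Gpx$. The only step requiring any care is checking that this complex genuinely satisfies the technical $\xi$-exactness condition of Definition 4.4, but this is forced by the factorization of $d$ through the given $\Mbe$-triangle, so no essential obstacle arises.
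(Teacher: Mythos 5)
Your proof is correct and takes essentially the same route as the paper: the split $\Mbe$-triangle $M\to M\oplus M\to M$ (using that $\Gpx$ is closed under $\xi$-extensions, hence finite direct sums, by Theorem \ref{Thm5.8}) handles the forward implications, and the substantive direction $(3)\Rightarrow(1)$ is obtained by splicing the given triangles into a bi-infinite $\Mcc(-,\Mcx)$-exact $\xi$-exact complex and then invoking Proposition \ref{Prop5.10}/Corollary \ref{Cor5.12} on the right half and Corollary \ref{Cor5.13} on the left half. Your $(5)\Rightarrow(1)$ argument, which builds the periodic complex $\cdots\to G\stackrel{ip}{\to}G\stackrel{ip}{\to}G\to\cdots$ from a single $\Mbe$-triangle, is precisely the content of the paper's laconic ``$(5)\Rightarrow(3)$: it is clear,'' so the only genuine difference is cosmetic (the paper realizes $(1)\Rightarrow(2)$ with the degenerate triangles $0\to M\to M$ and $M\to M\to 0$ rather than routing through $(4)$ with $M\oplus M$).
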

\begin{proof}
(1) $\Rightarrow$ (2) Let $M$ be a $\mathcal{GQ_{\Mcx}(\xi)}$-projective object of $\Mcc$. Consider the following split $\Mbe$-triangles
\[
0\>M\stackrel{1}\> M\dashrightarrow\text{\ and \ } M\stackrel{1}\>M\> 0\dashrightarrow
\]
in $\xi$, which is  $\Mcc(-,\Mcx)$-exact and $\Mcc(\Mcx,-)$-exact.

(2) $\Rightarrow$ (3) It is clear.

(1) $\Rightarrow$ (4)  Since $M\in\Gpx$, we have $M\oplus M\in\Gpx$ by Theorem \ref{Thm5.8}. Consider the following split $\Mbe$-triangle
\[
	M \stackrel{\begin{tiny}\begin{bmatrix}
		1 \\
		0
		\end{bmatrix}\end{tiny}}\>M \oplus M \stackrel{\begin{tiny}\begin{bmatrix}
		0&1
		\end{bmatrix}\end{tiny}}\> M\dashrightarrow
\]
in $\xi$, which is  $\Mcc(-,\Mcx)$-exact and $\Mcc(\Mcx,-)$-exact.

(4) $\Rightarrow$ (5) $\Rightarrow$ (3)  It is clear.

(3) $\Rightarrow$ (1) It follows from Corollary \ref{Cor5.12} and Corollary \ref{Cor5.13}.
\end{proof}

Let $\left[\Gpx\right]^1=\Gpx$, and inductively set the following subcategory of $\Mcc$:
\[
\begin{split}
\left[\Gpx\right]^{n+1}=&\left\{M \in \Mcc \mid\text{there exists a $\Mcc(-,\Mcx)$-exact and $\xi$-exact complex}\right. \\
&\cdots \> G_1 \> G_0 \> G_{-1} \> G_{-2} \>\cdots \text{\ in\ }\Mcc\\
&\text{with all\ } G_i\in\Gpx \text{\ and a $\xi$-resolution $\Mbe$-triangle\ } K_1\>G_0\>M\dashrightarrow\left.\right\}.
\end{split}
\]

\begin{thm}
	For any $n\geq 1$, $\left[\Gpx\right]^n=\Gpx$.
\end{thm}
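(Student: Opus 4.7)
The plan is to proceed by induction on $n$, with the characterization of $\mathcal{GQ_{\Mcx}(\xi)}$-projective objects given in Theorem~\ref{Thm5.15} as the central tool. The base case $n=1$ is the convention $[\Gpx]^1 = \Gpx$. For the inductive step, it suffices to establish $[\Gpx]^2 = \Gpx$ once, since the definition of $[\Gpx]^{n+1}$ refers to $\Gpx$ directly rather than to $[\Gpx]^n$, so the general case then follows immediately from the base case.

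For the inclusion $\Gpx \subseteq [\Gpx]^{n+1}$, given $M \in \Gpx$, Theorem~\ref{Thm5.15} (1) $\Rightarrow$ (3) yields $\Mcc(-,\Mcx)$-exact $\Mbe$-triangles $K_{m+1} \> G_m \> K_m \dashrightarrow$ in $\xi$ with $G_m \in \Gpx$ for every $m \in \mathbb{Z}$ and $K_0 = M$. Splicing these produces a $\Mcc(-,\Mcx)$-exact and $\xi$-exact complex $\cdots \> G_1 \> G_0 \> G_{-1} \> \cdots$ in $\Mcc$ whose $\xi$-resolution $\Mbe$-triangles are exactly the given ones; the triangle $K_1 \> G_0 \> M \dashrightarrow$ appears among them, witnessing $M \in [\Gpx]^{n+1}$.

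For the reverse inclusion $[\Gpx]^{n+1} \subseteq \Gpx$, given $M \in [\Gpx]^{n+1}$, the definition supplies a $\Mcc(-,\Mcx)$-exact and $\xi$-exact complex $\cdots \> G_1 \> G_0 \> G_{-1} \> \cdots$ with every $G_m \in \Gpx$ and with $K_1 \> G_0 \> M \dashrightarrow$ one of its $\xi$-resolution $\Mbe$-triangles. By definition, a $\Mcc(-,\Mcx)$-exact $\xi$-exact complex is one whose every $\xi$-resolution $\Mbe$-triangle is $\Mcc(-,\Mcx)$-exact, so the resulting family $K_{m+1} \> G_m \> K_m \dashrightarrow$ (with $K_0 = M$) meets precisely condition (3) of Theorem~\ref{Thm5.15}. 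Consequently $M \in \Gpx$.

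The main obstacle one might anticipate is more apparent than real: one must ensure the passage between an unbounded $\Mcc(-,\Mcx)$-exact $\xi$-exact complex and the collection of its $\Mcc(-,\Mcx)$-exact $\xi$-resolution $\Mbe$-triangles works in both directions. This is built into the very definition of $\Mcc(-,\Mcx)$-exactness for $\xi$-exact complexes used throughout the paper, so no additional diagrammatic work or invocation of Lemma~\ref{BH} is required. Once this definitional matching is in hand, the theorem reduces to two direct applications of Theorem~\ref{Thm5.15}, and the induction closes cleanly.
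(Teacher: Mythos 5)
Your proof is correct and rests on the same key tool as the paper's, namely the equivalence (1) $\Leftrightarrow$ (3) of Theorem~\ref{Thm5.15}, and both arguments effectively reduce the theorem to the identity $[\Gpx]^2 = \Gpx$.

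The one place you diverge is in how the induction is dispatched. You read the displayed definition of $[\Gpx]^{n+1}$ literally, in which case the right-hand side does not actually depend on $n$ (the complex always consists of objects of $\Gpx$), so $[\Gpx]^{n+1}$ is the same subcategory for every $n\geq 1$ and the whole theorem is simply the $n=2$ case. The paper instead observes that $\Gpx \subseteq [\Gpx]^2 \subseteq [\Gpx]^3 \subseteq \cdots$ is an ascending chain and then concludes by induction, which is the natural phrasing if one reads the definition as genuinely inductive, i.e.\ with $G_i \in [\Gpx]^n$ rather than $G_i \in \Gpx$ in the defining complex (the word ``inductively'' before the definition suggests this is what the author intended). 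Under that reading your shortcut does not immediately apply, but the induction still closes in one step exactly because $[\Gpx]^2 = \Gpx$ forces the $n$th step to coincide with the $(n-1)$st, so the two presentations are mathematically interchangeable. Your proof of the two inclusions of $[\Gpx]^2 = \Gpx$ via splicing and unsplicing $\xi$-resolution $\Mbe$-triangles is precisely what the cited equivalence of Theorem~\ref{Thm5.15} encapsulates, so the argument is sound.
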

\begin{proof}
	It is easy to see that 
\[
\Gpx\subseteq \left[\Gpx\right]^2\subseteq \left[\Gpx\right]^3 \subseteq\cdots
\]
is an ascending chain of subcategories of $\Mcc$. By (1) $\Leftrightarrow$ (3) of the Theorem \ref{Thm5.15}, we have that $\left[\Gpx\right]^2=\Gpx$. By using induction on $n$, we obtain easily the assertion
\end{proof}

We give some characterizations of $\Gpx$-resolution dimensions in the following part.

\begin{prop}\label{Prop5.17}
	Let $A\longrightarrow B\longrightarrow C \dashrightarrow$ be an $\mathbb{E}$-triangle in $\xi$ with $A,B\in\Gpx$, then the following statemants are equivalent:

(1) $C\in\Gpx$.

(2) $\ext^1(C,X)=0$, $\forall X\in\Mcx$.

(3) $\ext^1(C,P)=0$, $\forall P\in\Px$.
\end{prop}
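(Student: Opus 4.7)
The equivalences decompose into an easy part and a nontrivial part. $(1) \Rightarrow (2)$ follows directly from Lemma \ref{TTTT}, which gives $\ext^{i \geq 1}(C, X) = 0$ for $C \in \Gpx$ and $X \in \Mcx$. And $(2) \Rightarrow (3)$ is trivial since $\Px = \P \cap \Mcx \subseteq \Mcx$. The content is in $(3) \Rightarrow (1)$.

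For $(3) \Rightarrow (1)$, my plan is to construct a complete $\Px$-resolution of $C$. First I would strengthen $(3)$ to the full vanishing $\ext^{i \geq 1}(C, X) = 0$ for every $X \in \Mcx$. Indeed, applying the long exact sequence of Lemma \ref{LZHL} to the given $\mathbb{E}$-triangle $A \to B \to C \dashrightarrow$ together with $\ext^{i \geq 1}(A, X) = \ext^{i \geq 1}(B, X) = 0$ (from Lemma \ref{TTTT}, since $A, B \in \Gpx$) yields $\ext^{i \geq 2}(C, X) = 0$; and for $i = 1$, pick a $\Px$-resolution of $X$ (available since $\Mcx \subseteq \res{\Px}$, whose syzygies stay in $\Mcx$ by closure under $\xi$-cocones) and dimension-shift along its first syzygy triangle to obtain $\ext^1(C, X) \cong \ext^2(C, K_1^X) = 0$.

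Next, construct the right half $\cdots \to G_1 \to G_0 \to C \to 0$ as a $\Mcc(-, \Mcx)$-exact $\xi$-exact complex with $G_i \in \Gpx$. Take $G_0 = B$, $K_1 = A$; the triangle $A \to B \to C \dashrightarrow$ is $\Mcc(-, \Mcx)$-exact because the strengthened vanishing $\ext^1(C, X) = 0$ combined with the Hom-isomorphisms $\Mcc(-, X) \cong \ext^0(-, X)$ from Lemma \ref{TTTT} (valid on $A, B \in \Gpx$) forces $\Mcc(B, X) \to \Mcc(A, X)$ surjective. Iterate using a $\Mcc(-, \Mcx)$-exact $\Px$-presentation of $A \in \Gpx$ and continue indefinitely. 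For the left half $0 \to C \to G_{-1} \to G_{-2} \to \cdots$, apply $\rm(ET4)$ to the composable inflations $A \to B$ and $B \to P^B$ (from a $\Mcc(-, \Mcx)$-exact $\mathbb{E}$-triangle $B \to P^B \to B^* \dashrightarrow$ with $P^B \in \Px$, $B^* \in \Gpx$), obtaining $\mathbb{E}$-triangles $A \to P^B \to E \dashrightarrow$ and $C \to E \to B^* \dashrightarrow$. A separate argument establishes $E \in \Gpx$: splice $A$'s complete $\Px$-resolution with $A \to P^B \to E$ to construct the right half of $E$'s complete resolution, and use Lemma \ref{BH}(2) with the triangle $A \to P^A \to A^* \dashrightarrow$ together with Theorem \ref{Thm5.8} to handle the cosyzygy side. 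Then $C \to E \to B^* \dashrightarrow$ is $\Mcc(-, \Mcx)$-exact by the same long-exact-sequence argument, and iterating on $B^* \in \Gpx$ yields the entire left half.

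Splicing the two halves gives a $\Mcc(-, \Mcx)$-exact complete resolution of $C$ with terms in $\Gpx$, which upgrades to one with terms in $\Px$ by Corollaries \ref{Cor5.12} and \ref{Cor5.13}, so $C \in \Gpx$. The main obstacle is verifying $E \in \Gpx$, and, more generally, checking the $\Mcc(-, \Mcx)$-exactness of the successive $\mathbb{E}$-triangles produced during iteration. Both reduce to careful application of Lemma \ref{LZHL} and the isomorphisms of Lemma \ref{TTTT}, which let one translate the vanishing of the relevant $\ext^1$ groups into the required surjectivity in the Hom-sequences whenever one of the terms lies in $\Gpx$.
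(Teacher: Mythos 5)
The easy implications $(1)\Rightarrow(2)\Rightarrow(3)$ match the paper. For $(3)\Rightarrow(1)$ your strategy is genuinely different from the paper's and, as written, it has a gap at the crucial point.

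The paper's argument is short: take the $\mathcal{C}(-,\Mcx)$-exact $\mathbb{E}$-triangle $A\to P\to K\dashrightarrow$ in $\xi$ with $P\in\Px$, $K\in\Gpx$, and form (Lemma \ref{BH}(2)) the pushout diagram with first row $A\to B\to C$, first column $A\to P\to K$, producing $P\to G\to C\dashrightarrow$ and $B\to G\to K\dashrightarrow$. Then $G\in\Gpx$ by Theorem \ref{Thm5.8} (extension of $B$ and $K$), the hypothesis $\ext^1(C,P)=0$ together with the isomorphisms $\mathcal{C}(-,P)\cong\ext^0(-,P)$ on $\Gpx$ (Lemma \ref{TTTT}) forces $P\to G\to C$ to split, and closure under direct summands (Theorem \ref{Jan1}) gives $C\in\Gpx$. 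No construction of a full complete resolution of $C$ is needed.

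Your plan instead tries to build the complete $\Px$-resolution of $C$ directly. The right half is fine: you correctly upgrade $(3)$ to $\ext^{\ge 1}(C,X)=0$ for all $X\in\Mcx$, deduce $\mathcal{C}(-,\Mcx)$-exactness of $A\to B\to C$ from the $\ext^0\cong\mathcal{C}$ isomorphisms on $\Gpx$, and iterate. The gap is in the left half, specifically in the claim $E\in\Gpx$ for the cone $E$ of $A\to P^B$. Your suggested route --- splice $A$'s resolution with $A\to P^B\to E$ for the right half of $E$, and use Lemma \ref{BH}(2) with $A\to P^A\to A^*$ plus Theorem \ref{Thm5.8} for ``the cosyzygy side'' --- does not deliver a cosyzygy of $E$. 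The diagram from Lemma \ref{BH}(2) applied to $A\to P^A\to A^*$ and $A\to P^B\to E$ produces a triangle $P^A\to M\to E\dashrightarrow$ with $M\in\Gpx$, in which $E$ appears as the \emph{cone}, not as a cocone or a direct summand; Theorem \ref{Thm5.8} gives closure under $\xi$-extensions and $\xi$-cocones but not under $\xi$-cones, so one cannot conclude $E\in\Gpx$ from that triangle by ``handling the cosyzygy side.'' What actually works is to observe that $\ext^1(E,P^A)=0$ (which you can deduce from the $\mathcal{C}(-,\Mcx)$-exactness of $A\to P^B$), so $P^A\to M\to E$ splits and $E$ is a direct summand of $M\in\Gpx$, hence $E\in\Gpx$ by Theorem \ref{Jan1}. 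But this is exactly the paper's pushout-plus-split-plus-direct-summand argument applied to the triangle $A\to P^B\to E$, and once you see that, you can run it directly on $A\to B\to C$ and drop the detour through $E$, the splicing, and Corollaries \ref{Cor5.12}/\ref{Cor5.13} entirely. In short: the resolution-building strategy is workable in principle, but its missing step is precisely the paper's argument, which renders the rest of the construction unnecessary.
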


\begin{proof}
(1) $\Rightarrow$ (2) It follows from Lemma \ref{TTTT}.

(2) $\Rightarrow$ (3) It is trivial, since $\Px\subseteq\Mcx$.

(3) $\Rightarrow$ (1) Since $A\in\Gpx$, there is an $\mathbb{E}$-triangle $A\longrightarrow P \longrightarrow K\dashrightarrow$  with $P\in\Px$ and $K\in\Gpx$. Then we have the following commutative diagram
	\[
		\xymatrix{
			A\ar[d]\ar[r]&B\ar[d]\ar[r]&C\ar@{=}[d]\ar@{-->}[r] &\\
			P\ar[d]\ar[r]&G\ar[r]\ar[d]&C\ar@{-->}[r] &\\
			K\ar@{=}[r]\ar@{-->}[d]&K\ar@{-->}[d]\\
			&&}
\]
where all rows and columns are $\mathbb{E}$-triangles in $\xi$ by Lemma \ref{BH}(2) and Lemma \ref{Thm3.2}. So we obtain $G\in\Gpx$. For the $\mathbb{E}$-triangle $P\longrightarrow G \longrightarrow C\dashrightarrow$, we have the following commutative diagram by hypothesis
	\[
	\xymatrix{
	 &\Mcc(C,P)\ar[r]\ar[d]&\Mcc(G,P)\ar[r]\ar[d]^{\varphi}_{\simeq }&\Mcc(P,P)\ar@{..>}[r]\ar[d]_{\simeq }^{\psi } & 0&\\
	0\ar[r]&\ext^0(C,P)\ar[r]&\ext^0(G,P)\ar[r]&\ext^0(P,P)\ar[r]&\ext^1(C,P)=0.
	}
\]
where $\varphi$ and $\psi$ are isomorphisms by Lemma \ref{TTTT} since $P\in\Px\subseteq \Gpx$. Hence, the $\mathbb{E}$-triangle $P\longrightarrow G \longrightarrow C\dashrightarrow$ is split and then $C\in\Gpx$ by Theorem \ref{Jan1}.
\end{proof}

\begin{thm}\label{thm5.19}
Let $M$ be an object in $\Mcc$ with $\resdim{\Gpx}{M}<\infty$, then the following statemants are equivalent for any  $n\geq 0$:

(1) $\resdim{\Gpx}{M}\leq n$.

(2) There exists an $\Mbe$-triangle 
\[
M\>W\>G\dashrightarrow 
\]
in $\xi$ with $G\in\Gpx$ and $\resdim{\Px}{W}\leq n$.

(3) There exists an $\Mbe$-triangle 
\[
K\>G'\>M\dashrightarrow 
\]
in $\xi$ with $G'\in\Gpx$ and $\resdim{\Px}{K}\leq n-1$.

(4) There exist two $\Mbe$-triangles 
\[
W_M\>G_M\>M\dashrightarrow \text{\  and \ } M\>W^M\>G^M\dashrightarrow 
\]
in $\xi$ such that  $G_M, G^M$ belong to $\Gpx$ and $\resdim{\Px}{W_M}\leq n-1$, $\resdim{\Px}{W^M}=\resdim{\Gpx}{W^M}\leq n$. 

(5) There is a $\xi$-exact complex 
\[
0\>P_n\>P_{n-1}\>\cdots\> P_1\> G\>M\>0
\]
with $G\in\Gpx$ and all $P_i\in\Px$.

(6) For any $0\leq m\leq n$, there exists a $\xi$-exact complex 
\[
0\>P_n\>\cdots\> P_{m+1}\>G_m\>P_{m-1}\>\cdots\> P_1\> P_0\>M\>0
\]
with $G_m\in\Gpx$ and all $P_i\in\Px$.

(7) $\ext^{n+i}(M,P)=0$, $\forall P\in\Px$, $\forall i\geq 1$.

(8) $\ext^{n+i}(M,Q)=0$, $\forall Q\in\widehat{\Px}$, $\forall i\geq 1$.

(9) $\ext^{n+i}(M,X)=0$, $\forall X\in\Mcx$, $\forall i\geq 1$.

(10) $\ext^{n+i}(M,N)=0$, $\forall N\in\widehat{\Mcx}$, $\forall i\geq 1$.
\end{thm}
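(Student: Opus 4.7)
The plan is to import most of the equivalences from the main theorem of Section~4 applied to the pair $(\Gpx,\Px)$, and then attach the new $\Ext$-vanishing conditions (9) and (10) by short dimension-shifting arguments. First I would verify that the Section~4 theorem applies in this setting: $\Gpx$ is a quasi-resolving subcategory by Theorem~\ref{Thm5.8}; the remark after Lemma~\ref{TTTT} shows $\Px$ is an $\Ext$-injective $\xi$-cogenerator of $\Gpx$; and $\Px$ is closed under $\xi$-extensions and $\xi$-cocones, since $\Mcx$ is by definition of quasi-resolving and $\P$ is by a direct application of Lemma~\ref{LZHL} (the middle or cocone term in a $\xi$-extension or $\xi$-cocone inherits $\ext^{\geq 1}(-,W)=0$), and $\Px=\Mcx\cap\P$. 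Applying the Section~4 theorem with $\Mcx,\Mch$ there replaced by $\Gpx,\Px$ therefore delivers $(1)\Leftrightarrow(2)\Leftrightarrow(3)\Leftrightarrow(4)\Leftrightarrow(5)\Leftrightarrow(6)\Leftrightarrow(7)\Leftrightarrow(8)$ immediately.

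For $(1)\Rightarrow(9)$ I would take a $\Gpx$-resolution
\[
0\>G_n\>G_{n-1}\>\cdots\>G_0\>M\>0
\]
with $\xi$-resolution $\Mbe$-triangles $K_{j+1}\>G_j\>K_j\dashrightarrow$ (set $K_0=M$ and $K_n=G_n$), and fix $X\in\Mcx$. Lemma~\ref{TTTT} gives $\ext^{k\geq 1}(G_j,X)=0$ for every $j$, so the long exact sequences of Lemma~\ref{LZHL} yield the dimension-shifting chain
\[
\ext^{n+i}(M,X)\cong\ext^{n+i-1}(K_1,X)\cong\cdots\cong\ext^{i}(K_n,X)=\ext^{i}(G_n,X)=0
\]
for every $i\geq 1$.

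For $(9)\Rightarrow(10)$ I would reproduce the dimension-shift from Lemma~\ref{Lem4.8}(1): given $N\in\widehat{\Mcx}$ with $\resdim{\Mcx}{N}=k$, picking an $\Mcx$-resolution of $N$ and iterating Lemma~\ref{LZHL} in the second variable pushes $\ext^{n+i}(M,N)$ up to $\ext^{n+i+k}(M,X_k)$, which vanishes by (9). Finally $(10)\Rightarrow(9)\Rightarrow(7)$ hold by the trivial inclusions $\Px\subseteq\Mcx\subseteq\widehat{\Mcx}$, closing the cycle.

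The main piece of work is really just checking the hypotheses of the Section~4 theorem for $(\Gpx,\Px)$; once that is settled, the remaining implications reduce to routine dimension shifts using Lemma~\ref{LZHL} together with the vanishing statement in Lemma~\ref{TTTT}.
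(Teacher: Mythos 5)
Your proposal is correct and, for the bulk of the theorem, tracks the paper's route: invoke the unnamed main theorem of Section~4 with $\Mcx$ there replaced by $\Gpx$ and $\Mch$ by $\Px$ to knock out $(1)$--$(8)$, then dimension-shift along a $\Gpx$-resolution using Lemma~\ref{TTTT} for $(1)\Rightarrow(9)$, and along an $\Mcx$-resolution of $N$ for $(9)\Rightarrow(10)$. You are actually \emph{more} careful than the paper in one respect: the paper's proof cites Theorem~\ref{Thm4.6} for $(1)\Leftrightarrow\cdots\Leftrightarrow(8)$, but that theorem covers only the analogues of $(1),(2),(5),(6)$ and does not assume $\Ext$-injectivity; the equivalences with $(3),(4),(7),(8)$ genuinely require the later unnamed theorem whose hypotheses you spell out and verify. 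Where your route diverges is in closing the cycle: the paper proves $(9)\Rightarrow(1)$ directly, which forces it to bring in Proposition~\ref{Prop5.17} (syzygies with vanishing $\ext^1(-,\Px)$ are already in $\Gpx$), whereas you observe that $(10)\Rightarrow(9)\Rightarrow(7)$ follows from the trivial inclusions $\Px\subseteq\Mcx\subseteq\widehat{\Mcx}$, after which $(7)\Leftrightarrow(1)$ is already in hand. That is logically cleaner and drops the dependence on Proposition~\ref{Prop5.17} entirely.

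One small inaccuracy in your hypothesis check: to show $\P$ is closed under $\xi$-extensions and $\xi$-cocones you invoke Lemma~\ref{LZHL} to say the middle/cocone term inherits $\ext^{\geq 1}(-,W)=0$. The inheritance is fine, but vanishing of $\ext^{\geq 1}(B,-)$ alone does not obviously put $B$ in $\P$, since $\ext^0(B,-)$ need not agree with $\Mcc(B,-)$ unless $B$ is already known to be $\xi$-projective. The cleaner argument is the split one: in any $\mathbb{E}$-triangle $A\to B\to C\dashrightarrow$ in $\xi$ with $C\in\P$, the deflation $B\to C$ admits a section because $\Mcc(C,-)$ is exact on $\xi$-triangles, so the triangle splits; then $B\cong A\oplus C$ (for extensions) or $A$ is a direct summand of $B$ (for cocones), and $\P$ is closed under finite direct sums and summands. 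The conclusion you need --- that $\Px=\Mcx\cap\P$ is closed under $\xi$-extensions and $\xi$-cocones --- is still correct, and indeed the paper records it in the Remark following Definition~5.1, so this is only a matter of tightening the justification rather than a gap in the plan.
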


\begin{proof}
 (1) $\Leftrightarrow$ (2) $\Leftrightarrow$ (3)  $\Leftrightarrow$ (4) $\Leftrightarrow$ (5) $\Leftrightarrow$ (6) $\Leftrightarrow$ (7)  $\Leftrightarrow$ (8)  It follows from Theorem \ref{Thm4.6} since $\Px$ is a $\Ext$-injective $\xi$-cogenerator of $\Gpx$.

(1) $\Rightarrow$ (9) Since $\resdim{\Gpx}{M}\leq n$, then there exists a $\xi$-exact complex
\[
0\>G_n\>G_{n-1}\>\cdots\>G_1\>G_0\>M\>0
\]
with all $G_i\in\Mcx$ and the $\xi$-resolution $\mathbb{E}$-triangle $K_{i+1}\>G_i\>K_i\dashrightarrow$ (set $K_0=M$ and $K_n=G_n$) for any  $0\leq i\leq n$. By Lemma \ref{LZHL} and dimension shifting, we have $\ext^{n+i}(M,X)\cong \ext^i(G_n,X)=0$ for any $X\in\Mcx$ and $i\geq 1$ by Lemma \ref{TTTT}.

(9) $\Rightarrow$ (1)  Since $\resdim{\Gpx}{M}<\infty$, there exists a $\xi$-exact complex
\[
		\xymatrix{
			G_m\ar[r]&G_{m-1}\ar[r]&\cdots\ar[r]&G_2\ar[r]&G_1\ar[r]&G_0\ar[r]&M
	}
	\] 
for some integer $m$, where $G_i\in\Gpx$  and the $\xi$-resolution $\Mbe$-triangles are $K_{i+1}\longrightarrow G_i \longrightarrow K_i\dashrightarrow$  (set $K_0=M, K_m=G_m$) for any  $0\leq i\leq n-1$.  We only need to consider $m>n$. By dimension shifting, one can obtain that $\ext^i(K_j,X)\cong \ext^{i+j}(M,X)=0$ for any $i\geq 1$, $j\geq n$ and $X\in\Mcx$. Applying Proposition \ref{Prop5.17} on the $\Mbe$-triangle $G_m\rightarrow G_{m-1}\rightarrow K_{m-1}\dashrightarrow$ in $\xi$, implies that $K_{m-1}$ lies in $\Gpx$. By repeating this on the other $\xi$-resolution $\Mbe$-triangles, one can deduce that $K_n\in\Gpx$, and so $\resdim{\Gpx}{M}\leq n$.

(9) $\Rightarrow$ (10) Let $\resdim{\Mcx}{N}=m$, then then there exists a $\xi$-exact complex
\[
0\>X_m\>X_{m-1}\>\cdots\>X_1\>X_0\>N\>0
\]
with all $X_i\in\Mcx$ and the $\xi$-resolution $\mathbb{E}$-triangle $K_{i+1}\>G_i\>K_i\dashrightarrow$ (set $K_0=N$ and $K_m=X_m$) for any  $0\leq i\leq m$.  By Lemma \ref{LZHL} and dimension shifting, we have $\ext^{n+i}(M,N)\cong \ext^{m+n+i}(M,X_m)=0$ for any $i\geq 1$.

(10) $\Rightarrow$ (9) It is clear.
\end{proof}

From Theorem \ref{thm5.19}, we have the following consequence.

\begin{cor}
Let $M$ be an object in $\Mcc$ with $\resdim{\Gpx}{M}<\infty$, then
\[
\begin{split}
\resdim{\Gpx}{M}&=\sup\left\{n\in\mathbb{N} \mid \exists P\in\Px \text{\ such that\ } \ext^n(M,P)\neq 0 \right\}\\
&=\sup\left\{n\in\mathbb{N} \mid \exists Q\in\widehat{\Px} \text{\ such that\ } \ext^n(M,Q)\neq 0 \right\}\\
&=\sup\left\{n\in\mathbb{N} \mid \exists X\in\Mcx \text{\ such that\ } \ext^n(M,X)\neq 0 \right\}\\
&=\sup\left\{n\in\mathbb{N} \mid \exists N\in\widehat{\Mcx} \text{\ such that\ } \ext^n(M,N)\neq 0 \right\}.\\
\end{split}
\]
\end{cor}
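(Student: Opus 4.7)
The plan is to derive the corollary as a direct reformulation of the equivalences proved in Theorem \ref{thm5.19}, so almost no additional work is needed. Set $d := \resdim{\Gpx}{M}$, which is finite by hypothesis, and denote by $s_1, s_2, s_3, s_4$ the four suprema on the right-hand side, in the order they appear.

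First I would establish $d = s_1$. By the equivalence (1) $\Leftrightarrow$ (7) of Theorem \ref{thm5.19}, for every nonnegative integer $n$ we have $d \leq n$ if and only if $\ext^{n+i}(M, P) = 0$ for all $i \geq 1$ and all $P \in \Px$; but the latter condition is precisely $s_1 \leq n$ by the definition of the supremum. Hence the sets $\{n \in \mathbb{N} : d \leq n\}$ and $\{n \in \mathbb{N} : s_1 \leq n\}$ coincide, and taking the least element of each gives $d = s_1$. The same argument, with the equivalence (1) $\Leftrightarrow$ (7) replaced by (1) $\Leftrightarrow$ (8), (1) $\Leftrightarrow$ (9), and (1) $\Leftrightarrow$ (10) respectively, yields $d = s_2$, $d = s_3$, and $d = s_4$.

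I expect no genuine obstacle, since all of the homological input has already been absorbed into Theorem \ref{thm5.19}. The only minor point to handle is the boundary case $d = 0$, i.e.\ $M \in \Gpx$, where each supremum is taken over an empty set; here Lemma \ref{TTTT} (together with Lemma \ref{Lem4.8} to pass from $\Px$ and $\Mcx$ to their respective hats $\widehat{\Px}$ and $\widehat{\Mcx}$) supplies the required vanishing of $\ext^{j}(M, -)$ for all $j \geq 1$, so the convention $\sup \emptyset = 0$ is consistent with $d = 0$ and the four equalities remain valid.
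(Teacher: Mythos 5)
Your proposal is correct and matches the paper's approach exactly: the paper gives no separate proof of this corollary, stating only that it follows from Theorem \ref{thm5.19}, and your argument simply makes explicit the step of converting each equivalence (1) $\Leftrightarrow$ (7), (8), (9), (10) into the corresponding supremum formula. Your remark on the boundary case $M\in\Gpx$, invoking Lemma \ref{TTTT} and Lemma \ref{Lem4.8}, is a sensible clarification that the paper leaves implicit.
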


\begin{cor}
	Let $M$ be an object in $\Mcc$ with $\resdim{\Px}{M}<\infty$, then
\[\begin{split}
\resdim{\Px}{M}&=\sup\left\{n\in\mathbb{N} \mid \exists P\in\Px \text{\ such that\ } \ext^n(M,P)\neq 0 \right\}\\
&=\sup\left\{n\in\mathbb{N} \mid \exists Q\in\widehat{\Px} \text{\ such that\ } \ext^n(M,Q)\neq 0 \right\}
\end{split}\]
\end{cor}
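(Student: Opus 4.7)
The plan is to derive this corollary directly from the main equivalence theorem of Section 4 (the ten-condition characterization of $\resdim{\Mcx}{M}$ for $\Mcx$ quasi-resolving and $\Mch$ an $\Ext$-injective $\xi$-cogenerator of $\Mcx$ that is either closed under $\xi$-extensions and $\xi$-cocones or closed under direct summands). I will apply that theorem with the substitution $\Mcx = \Mch = \Px$, so that conditions (1), (4), and (5) translate exactly into the two supremum equalities in the statement once one passes to the minimal $n$.

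The substance is in verifying the hypotheses for this substitution. First I will show $\Px$ is itself a quasi-resolving subcategory. The inclusion $\Px \subseteq \res{\Px}$ is trivial, since every object of $\Px$ is its own length-zero $\Px$-resolution. Since $\Px = \Mcx \cap \P$ and $\Mcx$ is closed under $\xi$-extensions and $\xi$-cocones by hypothesis, closure of $\Px$ reduces to the same closure properties for $\P$: for $\xi$-cocones, given $A \to B \to C \dashrightarrow$ in $\xi$ with $B, C \in \P$, applying $\Mcc(C,-)$ to this same triangle shows that $\id_C$ lifts along $B \to C$, so the triangle splits, $B \cong A \oplus C$, and $A$ inherits $\xi$-projectivity as a direct summand of $B \in \P$; for $\xi$-extensions, a long-exact-sequence argument via Lemma \ref{LZHL} together with the vanishing of $\ext^{\geq 1}(A,-)$ and $\ext^{\geq 1}(C,-)$ forces $\ext^{\geq 1}(B,-) = 0$, and a horseshoe-style argument then promotes $B$ to $\P$.

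Second, $\Px$ is an $\Ext$-injective $\xi$-cogenerator of itself: the split $\Mbe$-triangle $X \to X \to 0 \dashrightarrow$ lies in $\Delta_0 \subseteq \xi$ for any $X \in \Px$, and for $P, P' \in \Px \subseteq \P$ we have $\ext^{\geq 1}(P, P') = 0$ by definition of $\xi$-projective and $\ext^0(P, P') \cong \Mcc(P, P')$ by Lemma \ref{LZHL}. Closure of $\Px$ under $\xi$-extensions and $\xi$-cocones was handled in the preceding paragraph. With all hypotheses verified, the main theorem of Section 4 gives $\resdim{\Px}{M} \leq n$ if and only if $\ext^{n+i}(M, P) = 0$ for all $i \geq 1$ and $P \in \Px$, if and only if $\ext^{n+i}(M, Q) = 0$ for all $i \geq 1$ and $Q \in \widehat{\Px}$; taking the least such $n$ (which exists since $\resdim{\Px}{M} < \infty$) produces both supremum equalities. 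The only mildly nontrivial step is the closure of $\P$ under $\xi$-cocones and $\xi$-extensions, and both reduce to standard splitting or horseshoe arguments on top of Lemma \ref{LZHL}.
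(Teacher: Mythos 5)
Your proposal is correct and takes the same route as the paper: the paper's proof is precisely the one-line observation that $\Px$ is a quasi-resolving subcategory of $\Mcc$ that serves as its own $\Ext$-injective $\xi$-cogenerator, and you have spelled that out and applied the Section 4 equivalence theorem with $\Mcx = \Mch = \Px$, reading off conditions (1), (4), and (5). One small correction to your verification that $\Px$ is closed under $\xi$-extensions: the detour through showing $\ext^{\geq 1}(B,-) = 0$ and then invoking a ``horseshoe-style'' promotion of $B$ to $\P$ is unnecessary and somewhat shaky, since vanishing of higher $\ext$-groups does not immediately recover the $\xi$-projective lifting property for $B$; the clean argument is the same splitting you already use for $\xi$-cocones --- since $C \in \P$, the $\Mbe$-triangle $A \to B \to C \dashrightarrow$ in $\xi$ splits, so $B \cong A \oplus C \in \P$ directly.
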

\begin{proof}
It follows that $\Px$ is a quasi-resolving subcategory of $\Mcc$ that serves as its own $\Ext$-injective $\xi$-cogenerator. 
\end{proof}

\begin{cor}
		Let $M$ be an object in $\Mcc$ with $\resdim{\Px}{M}<\infty$, then
\[
\resdim{\Px}{M}=\resdim{\Gpx}{M}.
\]
\end{cor}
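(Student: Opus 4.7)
The plan is to reduce the claim to the two preceding corollaries, which characterize each of the two dimensions as a supremum of indices where $\ext^n(M,P)$ fails to vanish on $\Px$. The key point is to show that both dimensions are simultaneously finite, so that both corollaries apply and give the same numerical value.

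First I would observe that the containment $\Px\subseteq\Gpx$ (an immediate consequence of Proposition \ref{Prop5.3}) yields $\resdim{\Gpx}{M}\leq \resdim{\Px}{M}$; in particular, the hypothesis $\resdim{\Px}{M}<\infty$ forces $\resdim{\Gpx}{M}<\infty$. Hence the hypothesis of the previous corollary (the third-to-last statement) is satisfied, which tells us that
\[
\resdim{\Gpx}{M}=\sup\left\{n\in\mathbb{N} \mid \exists P\in\Px \text{\ such that\ } \ext^n(M,P)\neq 0 \right\}.
\]
On the other hand, since $\resdim{\Px}{M}<\infty$ by hypothesis, the second-to-last corollary gives
\[
\resdim{\Px}{M}=\sup\left\{n\in\mathbb{N} \mid \exists P\in\Px \text{\ such that\ } \ext^n(M,P)\neq 0 \right\}.
\]
Comparing the two right-hand sides yields $\resdim{\Px}{M}=\resdim{\Gpx}{M}$, as desired.

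There is essentially no obstacle here beyond checking that the finiteness hypothesis of each corollary is met; the substantive work was already done in Theorem \ref{thm5.19} and its corollary (for $\Gpx$) and in the application of Theorem \ref{Thm4.6} to $\Px$ (viewed as a quasi-resolving subcategory that is its own $\Ext$-injective $\xi$-cogenerator, as noted in the proof of the second-to-last corollary). The only subtlety worth pausing on is ensuring that the same supremum expression is legitimately available in both settings: for $\resdim{\Gpx}{M}$ this needs Lemma \ref{TTTT} (i.e.\ $\Gpx\perp\Px$), while for $\resdim{\Px}{M}$ it reduces to self-orthogonality of $\Px$ via the defining property of $\xi$-projectives combined with Lemma \ref{LZHL}; both are already in place.
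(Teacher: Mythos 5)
Your proof is correct and takes exactly the route the paper intends: the corollary is stated without proof immediately after the two supremum characterizations, so it is meant to follow by comparing those identical right-hand sides once one observes $\Px\subseteq\Gpx$ forces $\resdim{\Gpx}{M}\leq\resdim{\Px}{M}<\infty$. You correctly flag and verify the one non-trivial point — that the finiteness hypothesis of the $\Gpx$-corollary is inherited from the $\Px$-hypothesis via inclusion of subcategories.
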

{\small

}

\end{document}